\documentclass[a4paper,10pt]{article}
\usepackage{geometry}
\geometry{a4paper,top=15mm,left=25mm,right=25mm,bottom=12mm,includefoot}
\usepackage[utf8]{inputenc}
\usepackage[T1]{fontenc}
\usepackage{lmodern,amsmath,amssymb,amsthm,mathtools,dsfont,mleftright}
\mleftright

\usepackage[colorlinks,linkcolor=black,citecolor=black,urlcolor=black]{hyperref}
\usepackage[backend=biber]{biblatex}
\ExecuteBibliographyOptions{
	sorting=nyt,
	maxbibnames=9,
	giveninits=true,
	date=year,
	url=false,
	isbn=false,
	doi=false,
	eprint=true}
\addbibresource{./bibliography.bib}
\AtEveryBibitem{
	\ifentrytype{book}{\clearfield{pages}}{}
}
\DeclareDelimFormat{finalnamedelim}{\ifnumgreater{\value{liststop}}{2}{\finalandcomma}{}\addspace\&\space}
\DeclareDelimFormat{namelabeldelim}{\addnbspace}

\theoremstyle{definition}
\newtheorem{setting/}{Setting}[section]
\newenvironment{setting}{
	\pushQED{\qed}\begin{setting/}}
	{\popQED\end{setting/}
}
\theoremstyle{plain}
\newtheorem{theorem}[setting/]{Theorem}
\newtheorem{lemma}[setting/]{Lemma}

\providecommand{\smallsum}{\textstyle\sum}
\providecommand{\dd}{\mathop{}\!d}
\providecommand{\di}{\mathop{}\!}
\providecommand{\defeq}{\mathrel{\vcentcolon=}}
\providecommand{\N}{\ensuremath{\mathds{N}}}
\providecommand{\R}{\ensuremath{\mathds{R}}}
\providecommand{\1}{\mathds{1}}
\providecommand{\E}{\mathbb{E}}
\providecommand{\Prob}{\mathbb{P}}
\DeclareMathOperator{\Lip}{Lip}
\DeclareMathOperator{\sgn}{sgn}
\providecommand{\dl}{\delta}
\providecommand{\eps}{\varepsilon}
\providecommand{\cadlag}{c\`adl\`ag}

\providecommand{\MCB}{{\mathcal{B}}}
\providecommand{\MCE}{{\mathcal{E}}}
\providecommand{\MCL}{{\mathcal{L}}}
\providecommand{\MCM}{{\mathcal{M}}}
\providecommand{\MCS}{{\mathcal{S}}}
\providecommand{\MCT}{{\mathcal{T}}}

\providecommand{\deltab}{{\bar\delta}}
\providecommand{\etab}{{\bar{\eta}}}
\providecommand{\Yb}{{\bar{Y}}}

\providecommand{\at}{{\tilde{a}}}
\providecommand{\gt}{{\tilde{g}}}
\providecommand{\hht}{{\tilde{h}}}
\providecommand{\mt}{{\tilde{m}}}
\providecommand{\Xt}{{\tilde{X}}}
\providecommand{\Yt}{{\tilde{Y}}}

\providecommand\given{\nonscript\:\vert\nonscript\:\mathopen{}}
\DeclarePairedDelimiterXPP\Prop[1]{\Prob}(){}{
\renewcommand\given{\nonscript\:\delimsize\vert\nonscript\:\mathopen{}} #1}
\DeclarePairedDelimiterXPP\Exp[1]{\E}[]{}{
\renewcommand\given{\nonscript\:\delimsize\vert\nonscript\:\mathopen{}} #1}
\DeclarePairedDelimiter\abs{\lvert}{\rvert}
\DeclarePairedDelimiter\norm{\lVert}{\rVert}

\title{Propagation of chaos
and the many-demes limit for \\
weakly interacting diffusions
in the sparse regime}
\author{Martin Hutzenthaler$^{1}$ \& Daniel Pieper$^{2}$
\bigskip
\\
\small{Faculty of Mathematics, University of Duisburg-Essen, 45117 Essen,
Germany}\\
\small{$^1$e-mail: martin.hutzenthaler@uni-due.de, $^2$e-mail: daniel.pieper@uni-due.de}
}
\date{September 5, 2019}

\begin{document}

\maketitle

\begin{abstract}
	Propagation of chaos is a well-studied phenomenon and shows that weakly
	interacting diffusions may become independent as the system size converges
	to infinity.
	Most of the literature focuses on the case of exchangeable systems where all
	involved diffusions have the same distribution and are ``of the same size''.
	In this paper, we analyze the case where only a few diffusions start outside
	of an accessible trap.
	Our main result shows that in this ``sparse regime'' the system of weakly
	interacting diffusions converges in distribution to a forest of
	excursions from the trap.
	In particular, initial independence  propagates in the limit and results
	in a forest of independent trees.
\end{abstract}

\let\thefootnote\relax
\footnotetext{\emph{AMS 2010 subject classification:}
60K35 (Primary); 60J70, 92D25 (Secondary)}
\footnotetext{\emph{Key words and phrases:} propagation of chaos, weakly
interacting diffusions, mean-field approximation, mean-field limit,
McKean-Vlasov equations, many-demes limit, excursion measure, altruistic
defense}

\tableofcontents

\section{Introduction}\label{sec:introduction}
The notion ``propagation of chaos'' was originally termed by Mark
Kac~\cite{Kac1956} and refers to a relation between microscopic and
macroscopic models.  Microscopic descriptions, on the one hand, are based on
molecules (or particles, individuals, subpopulations, etc.)\ and model their
interactions and driving forces. Macroscopic descriptions, on the other hand,
are based on macroscopic observables such as the density and model the
dynamics of these quantities.  To connect microscopic and macroscopic
descriptions, the limit of the density in the $D$-molecule microscopic model
should converge as $D\to\infty$ to the density in the macroscopic model.
Now Kac's idea behind the terminology ``propagation of chaos'' is that if the
initial distribution is ``chaotic'' (e.g.\ positions and velocities of
molecules are purely random and independent), then the dynamics of the
microscopic model destroys this independence, but finitely many fixed molecules
should in the limit as $D\to\infty$ evolve independently (depending on all
other molecules only through deterministic macroscopic observables such as the
density).
In this sense, independence of finitely many fixed molecules ``propagates''.

Next we give a formal statement of ``propagation of chaos'' for weakly
interacting diffusions.
Let $I\subseteq\R$ be a closed interval (we focus on one-dimensional cases),
let the set $\mathcal{M}_1(I)$ of probability measures on $I$ be equipped with the
$1$-Wasserstein metric,
let $b,\tilde{\sigma}\colon I\times\mathcal{M}_1(I)\to\R$ be measurable
functions,
let $W(i)$, $i\in\N$, be independent standard Brownian motions,
for every $D\in\N$ let $X^D=\{(X_t^D(i))_{t\in[0,\infty)}\colon
i\in\{1,\dotsc,D\}\}$ have state space $I^D$ and be a solution of the
stochastic differential equation (SDE)
\begin{equation}  \begin{split}\label{eq:XDgeneral}
  dX_t^{D}(i)={}
	b\biggl(X_t^{D}(i),\tfrac{1}{D}\smallsum\limits_{j=1}^D
	\delta_{X_t^{D}(j)}\biggr)\dd t
	+\tilde{\sigma}\biggl(X_t^{D}(i),\tfrac{1}{D}\smallsum\limits_{j=1}^D
	\delta_{X_t^{D}(j)}\biggr)&\dd W_t(i),\\
	&t\in (0,\infty),\, i\in\{1,\dotsc,D\},
\end{split}     \end{equation}
and let $M(i)$, $i\in\N$, be independent and identically distributed (i.i.d.)\
and be a solution of the SDE
\begin{equation}  \begin{split}\label{eq:MFSDE}
	d M_t(i)=b\bigl(M_t(i),\Prop{M_t(i)\in\:\cdot\:}\bigr)\dd t +
	\tilde{\sigma}\bigl(M_t(i),\Prop{M_t(i)\in\:\cdot\:}\bigr)\dd W_t(i), \quad
	t\in (0,\infty),\, i\in\N.
\end{split}     \end{equation}
Then under the additional assumptions that $I=\R$, that $b,\tilde{\sigma}$ are
globally Lipschitz continuous, that $\tilde{\sigma}$ is bounded, that $b$
satisfies a linear growth condition, and that for all $D\in\N$ it holds that
$X_0^D$ and $(M_0(i))_{i\in\{1,\dotsc,D\}}$ have the same distribution (in
particular, the components of $X_0^D$ are i.i.d.), Theorem 1
in~\textcite{Oelschlaeger1984} implies for all $k\in\N$
that
\begin{equation}  \begin{split}\label{eq:components}
	\bigl(X_t^D(1),\dotsc,X_t^D(k)\bigr)_{t\in[0,\infty)}\xRightarrow{D\to\infty}
	\bigl(M_t(1),\dotsc,M_t(k)\bigr)_{t\in[0,\infty)}
\end{split}     \end{equation}
in the sense of convergence in distribution on $C([0,\infty),I^k)$ and that
\begin{equation}  \begin{split}\label{eq:empirical.distribution}
	\Biggl(\frac{1}{D}\sum_{j=1}^D\delta_{X_t^D(j)}\Biggr)_{t\in[0,\infty)}
	\xRightarrow{D\to\infty} \bigl(\Exp{\delta_{M_t(1)}}\bigr)_{t\in[0,\infty)}
\end{split}     \end{equation}
in the sense of convergence in distribution on
$C([0,\infty),\mathcal{M}_1(I))$. So although the
components of $X^D$ depend on each other through the empirical distribution
process $(\frac{1}{D}\sum_{j=1}^D\delta_{X^D_t(j)})_{t\in[0,\infty)}$ for
every finite $D\in\N$, in the limit as $D\to\infty$ a finite number of fixed
components become independent since they only ``depend'' on each other through
the deterministic process
$(\Exp{\delta_{M_t(1)}})_{t\in[0,\infty)}$.
Theorem 4.1 in~\textcite{Gaertner1988} implies~\eqref{eq:components}
and~\eqref{eq:empirical.distribution} under more general assumptions including
strict positivity of $\tilde{\sigma}$ and Proposition 4.29
in~\textcite{Hutzenthaler2012} implies~\eqref{eq:components}
and~\eqref{eq:empirical.distribution}
for certain cases where $\tilde{\sigma}$ is locally
H\"older-$\frac{1}{2}$-continuous in the first argument and does not depend on
the second argument.
For further results on propagation of chaos see, e.g.,
\textcite{McKean1967,Sznitman1989,Oelschlaeger1985,MeleardRoelly1987,
LasryLions2007, BuckdahnDjehicheLiPeng2009}.
The limit~\eqref{eq:empirical.distribution} is also referred to as mean-field
approximation.  The SDE~\eqref{eq:MFSDE} is
referred to as mean-field SDE or SDE of McKean-Vlasov type.
An essential observation for all of these results is that $X^D(i)$,
$i\in\{1,\dotsc,D\}$, are exchangeable for every $D\in\N$ so that all components have the
same distribution and are -- informally speaking -- of the ``same size''.

In this paper, we focus on the case $I=[0,1]$ and interpret elements of $[0,1]$
as frequencies (e.g.\ of a certain property within a subpopulation) and think
of a population which is spatially separated into finitely many subpopulations
(also denoted as ``demes'') which are labeled by the elements of
$\{1,\dotsc,D\}$, where $D\in\N$.  We assume that a subpopulation stays in
frequency $0$ as long as there is no immigration into this subpopulation.  Our
question is: What is the limit of $X^D$ as $D\to\infty$ if only one entry in
the vector $(X_0^D(i))_{i\in \N}$ is non-zero?  We will assume that $X_0$ is a
$[0,1]^{\N}$-valued random variable which is almost surely summable and that
for all $D\in\N$ and all $i\in\{1,\dotsc,D\}$  it holds almost surely that
$X_0^D(i)=X_0(i)$.  We will refer to this case as \emph{sparse regime}. In
particular, in the sparse regime $X_0^D$ cannot be exchangeable (and
nontrivial) for every $D\in\N$.  The puzzling question is now how does independence of the
initial frequencies propagate in the \emph{many-demes limit} (cf.,
e.g.,~\textcite{WakeleyTakahashi2004}) as $D\to\infty$?

We will study this non-trivial question under the simplifying assumption that
$b$ is affine-linear in the second argument and that $\tilde{\sigma}$ is
constant in the second argument.  More precisely, let $f\colon [0,1]^2\to \R$,
$h\colon [0,1]\to \R$, $\sigma\colon [0,1]\to[0,\infty)$, and
$h_D\colon[0,1]\to\R$, $D\in\N$, be functions which satisfy
Setting~\ref{set:coefficients} below.  In the special case  where $I=[0,1]$
and where for all $(x,\nu)\in I\times\mathcal{M}_1(I)$ it holds that
$b(x,\nu)=\int_I yf(y,x)\di \nu(dy)+h_D(x)$ and
$\tilde{\sigma}(x,\nu)=\sigma(x)$, for every $D\in\N$ the solution $X^D$
of~\eqref{eq:XDgeneral} solves the SDE
\begin{equation}
	\begin{split}
		dX_t^{D}(i) =
		\frac 1D \sum_{j=1}^D
		X_t^{D}(j)f\bigl( X_t^{D}(j), X_t^{D}(i)\bigr) \dd t + h_D\bigl(
		X_t^{D}(i) \bigr) \dd t
		&+\sqrt{\sigma^2\bigl(X_t^{D}(i)\bigr)}\dd
		W_t(i),\\
		&\quad t\in (0,\infty),\, i\in\{1,\dotsc,D\}.
	\end{split}
	\label{eq:XD}
\end{equation}
We allow the function $h_D$ to depend on $D\in\N$ in order to include weak
immigration (one could think of $h_D(x)=h(x)+\frac{\mu}{D}$ where
$\mu\in[0,\infty)$ is the immigration rate into the total population and where
$h(0)=0$).

Now we describe heuristically the propagation of initial independence in the
many-demes limit.  For this, we assume for simplicity for all $D\in\N$ that
$h_D=h$ (no immigration) and that $X_0(i)=0$ for all $i\in\N\cap[3,\infty)$.
The total mass is bounded in $D$ for every time point.  As a consequence, the
first summand on the right-hand side of~\eqref{eq:XD} converges to zero and
the first deme $X^D(1)$ converges to the solution of the SDE
\begin{equation}
	dY_t = h(Y_t)\dd t + \sqrt{\sigma^2(Y_t)}\dd W_t(1),\quad t\in (0,\infty),
	\label{eq:SDE_Y}
\end{equation}
as $D\to\infty$.
Mass emigrates from this first deme.  This mass will not migrate to deme $2$ (or
deme $1$) since the immigration rate
$\frac{1}{D}X_t^D(1)f(X_t^D(1),X_t^D(2))$ at time $t\in[0,\infty)$ from deme
$1$ to deme $2$ vanishes as $D\to\infty$.  Thus, this mass migrates to a deme
with index in $\{3,4,\dotsc,D\}$ and there will be a finite number of demes
where this mass immigrates and founds a non-vanishing subpopulation.  From
these subpopulations again mass emigrates.
This mass again will not migrate to deme $1$, $2$, $3$, or any other deme with
fixed index $i\in\N$ since the total migration rate into a deme with fixed
index vanishes in the many-demes limit. Instead, this mass migrates again to
randomly chosen demes (which are ``empty'' with asymptotic probability one) and
founds non-vanishing subpopulations. Consequently, since every migrating mass
populates ``empty'' demes (with asymptotic probability one), the
subpopulations which originate from descendants of migrants from deme $1$
constitute a tree of independent subpopulations. Analogously, the
subpopulations which originate from descendants of migrants from deme $2$
constitute a tree of independent subpopulations. In addition, these two trees
are disjoint (and thus driven by independent families of Brownian motions) and
therefore independent if $X_0(1)$ and $X_0(2)$ are independent random
variables.
In other words, independence of the family $\{X_0(i)\colon
i\in\N\}$ propagates in the many-demes limit and results in a forest of
independent trees of independent subpopulations.  A formal statement of this
``propagation of chaos'' result in the sparse regime will be proved in
Theorem~\ref{thm:convergence} below.
We note that, as opposed to the exchangeable regime,
``propagation of independence'' does not mean that fixed demes (e.g., deme $3$ and $4$)
become independent in the limit as $D\to\infty$ (which is rather trivial)
but that the full progenies of individuals starting on deme $3$ and on deme
$4$ do not interfere in the limit as $D\to\infty$.

In the literature, this type of ``propagation of chaos'' has already been
established in two special cases.  Theorem 3.3
in~\textcite{Hutzenthaler2012} proves the analog of
Theorem~\ref{thm:convergence} below in the special case where the
infinitesimal variance $\sigma^2$ is additive (and where $I=[0,\infty)$ and
for all $x,y\in[0,\infty)$ it holds that $f(y,x)=1$) and this additivity
of infinitesimal variances is a strong tool for decomposing the total
population into ``loop-free'' processes.  Moreover, Proposition~2.9
in~\textcite{DawsonGreven2014} proves an analog of
Theorem~\ref{thm:convergence} below in the special case where for all
$x,y\in[0,1]$ and all $D\in\N$ it holds that $\sigma^2(x)= d x(1-x)$,
$f(y,x)=c$, $h_D(x)=-cx+sx(1-x)+\frac{m}{D}(1-x)$ where
$c,d,m,s\in(0,\infty)$ are positive constants and where the forest
of excursions is replaced by a dynamic description hereof which is a
continuous atomic-valued Markov process and where independence of disjoint
trees is not obvious.  In this special case of Wright-Fisher diffusions with
selection and rare mutation, there exists a duality with a particle jump
process and this duality is a very strong tool.
Our more general setup allows for new applications, one of which is carried
out in Subsection~\ref{ss:altruism} below.
Moreover, there are many related results for interacting particle systems
or systems of interacting diffusions where branching processes
appear in ``sparse'' regimes. For example, it is a classical result that
the number of alleles
of one type in a Wright-Fisher model (Moran model) converges to a
branching process in discrete (continuous) time as the population size converges
to infinity if the initial numbers of alleles of this type are bounded.
For results with SuperBrownian motion appearing in suitable rescalings see, e.g.,
\textcite{DurrettPerkins1999, CoxPerkins2005,ChetwyndDiggleEtheridge2018,
ChetwyndDiggleKlimek2019}.

The structure of this paper is as follows.  In
Subsection~\ref{ss:tree_excursions} we introduce the forest of
excursions, in Subsection~\ref{ss:main_result} we state our main result
Theorem~\ref{thm:convergence}, and in Subsection~\ref{ss:altruism} we specify
an application to altruistic defense traits.  The proof of
Theorem~\ref{thm:convergence} consists essentially of two major steps.  In
Section~\ref{sec:convergence_loop_free} we prove that if ancestral lineages of
individuals never come back to a deme, then the resulting ``loop-free''
processes (see the SDE~\eqref{eq:ZD_k} below) converge in the many-demes limit
(see Lemma~\ref{l:convergence_of_the_loop_free_process} below).  Moreover, in
Section~\ref{sec:convergence_excursions} we show that the distance between the
$D$-demes process~\eqref{eq:XD} and the corresponding ``loop-free'' process
converges suitably to zero as $D\to\infty$ (see Lemma~\ref{l:replace_fidi}
below).
The principal idea of reducing the problem to loop-free processes stems
from~\textcite{Hutzenthaler2012}.
Throughout this paper, we use the notation from
Subsection~\ref{ss:notation} below without further mentioning.

\subsection{Notation}\label{ss:notation}
For all $x,y\in\R$ we define $x\wedge y \defeq \min\{x,y\}$, $x \vee y
\defeq \max\{x,y\}$, $x^+ \defeq x \vee 0$, $x^- \defeq -(x \wedge 0)$,
and $\sgn(x) \defeq \1_{(0,\infty)}(x) - \1_{(-\infty,0)}(x)$.
We define $\sup \emptyset
\defeq -\infty$ and $\inf \emptyset \defeq \infty$.
We write $\N \defeq \{1,2,3,\ldots\}$ and $\N_0 \defeq \N \cup \{0\}$.
For all $N\in\N$ we write $[N] \defeq \{1,\dotsc,N\}$ and $[N]_0 \defeq
[N] \cup \{0\}$.

For the remainder of this subsection, let $(E,d_E)$ and $(F,d_F)$ be metric
spaces and let $d\in\N$ and $m\in\N_0$.
We denote by $\MCB(E)$ the Borel
$\sigma$-algebra on $(E,d_E)$ and by $\MCM_\textup{f}(E)$ the set
of finite measures on $(E,\MCB(E))$ endowed with the weak
topology.
For every $s\in[0,\infty)$ we denote by
$D([s,\infty),E)$ the set of
all \cadlag\ functions $f\colon [s,\infty) \to E$ endowed with the
Skorokhod topology.
We denote by $C(E,F)$ the set of
all continuous functions $f\colon
E\to F$ and by $\Lip(E,F)$ the set of all Lipschitz continuous functions
$f\colon E\to F$.
We denote by $C^2_b(\R,\R)$ the set of twice continuously
differentiable bounded functions $\psi\colon \R \to \R$ with bounded first
and second derivative. For every $\psi \colon \R \to \R$ we write
$\norm{\psi}_{\infty} \defeq \sup_{x\in\R} \abs{\psi(x)} \in [0,\infty]$.
We denote by $C^m([0,1]^d,\R)$ the set
of functions $\psi\colon [0,1]^d \to \R$ whose partial derivatives of
order $0$ through $m$ exist and are continuous on $[0,1]^d$. For every
$\psi\colon [0,1]^d \to \R$ we define $\norm{\psi}_\infty \defeq
\sup_{x\in [0,1]^d}\abs{\psi(x)} \in [0,\infty]$. For every multiindex
$\alpha = (\alpha_1,\dotsc,\alpha_d) \in \N_0^d$ of length $\abs{\alpha}
\defeq \sum_{k=1}^d \alpha_k$ we write $\partial^\alpha \defeq
\frac{\partial^{\abs \alpha}}{\partial x_1^{\alpha_1}\dotsm\partial
x_d^{\alpha_d}}$.
For every $\psi \in C^m([0,1]^d,\R)$ we set
$\norm{\psi}_{C^m} \defeq \max_{\alpha\in\N_0^d, \abs{\alpha} \leq
m}\norm{\partial^\alpha\psi}_\infty$.

By a solution of an SDE driven by Brownian motions we mean a stochastic
process with continuous sample paths which is adapted to the filtration
generated by the Brownian motions and the initial value and which satisfies
the integrated SDE for every time point almost surely.

\subsection{Setting and forest of excursions}\label{ss:tree_excursions}
In this subsection, we gather the assumptions that we impose in our main
result, Theorem~\ref{thm:convergence} below, and we introduce the forest of
excursions which plays the role of a limiting object in our main result.

In the following Setting~\ref{set:coefficients}, we collect our assumptions on
the coefficients of the SDE~\eqref{eq:XD}.
Under these assumptions, for every $D\in\N$ the SDE~\eqref{eq:XD} has a unique
strong solution with continuous sample paths in $[0,1]^D$; see Theorem~3.2
in~\textcite{ShigaShimizu1980}.
Moreover, under these assumptions, the SDE~\eqref{eq:SDE_Y} has a unique
strong solution $(Y_t)_{t\in[0,\infty)}$ with continuous sample paths in
$[0,1]$ for which $0$ is a trap, that is, for all $t,s\in[0,\infty)$ it holds
that ($Y_t=0$ implies $Y_{t+s}=0$).
\begin{setting}[Coefficient functions] \label{set:coefficients}
	Let $\mu \in [0,\infty)$, $f \in C^3([0,1]^2,\R)$, $h \in
	C^3([0,1],\R)$, and $h_D \in C^3([0,1],\R)$, $D\in\N$, satisfy that
	$\sup_{D\in\N} \norm{h_D}_{C^2} < \infty$, that
	$\lim_{D\to\infty}Dh_D(0) = \mu$, for all $x \in [0,1]$ that
	$\lim_{D\to\infty} h_D(x) = h(x)$, and for all $D\in\N$ and all $y\in
	(0,1]$ that $yf(y,1) + h(1) \leq 0$, that $yf(y,1) + h_D(1) \leq 0$,
	that $f(y,0) > 0$, that $2\mu \geq Dh_D(0) \geq 0 = h(0)$, and that
	$h(1) < 0$. 

	Let $L_f, L_h \in [0,\infty)$ be such that for all $D\in\N$ and all
	$x,y,u,v\in[0,1]$ it holds that $\abs{f(y,x) - f(v,u)} \leq L_f
	\abs{y-v} + L_f \abs{x-u}$, that $\abs{f(y,x)} \leq L_f$, that
	$\abs{h(x) - h(y)} \leq L_h \abs{x-y}$, and that $\abs{h_D(x) - h_D(y)}
	\leq L_h \abs{x-y}$.
		
	Let $\sigma^2 \in C^3([0,1],\R)$ satisfy that $\sigma^2(0) = 0 =
	\sigma^2(1)$ and for all $x\in(0,1)$ that $\sigma^2(x) > 0$. Let
	$L_\sigma \in [0,\infty)$ be such that for all $x,y\in[0,1]$ it holds
	that $\abs{\sigma^2(x) - \sigma^2(y)} \leq L_\sigma \abs{x-y}$.

	For every $D\in\N$ we define the function $\hht_D\colon [0,1]\to\R$
	by $[0,1]\ni x \mapsto \hht_D(x) \defeq h_D(x) - h_D(0)$.
\end{setting}

In addition to Setting~\ref{set:coefficients}, we impose the following
assumptions involving the scale function $S$ of $Y$, which imply that $Y$ hits
zero in finite time almost surely (a straightforward adaptation of Lemma~9.5
and Lemma~9.6 in~\textcite{Hutzenthaler2009} to the state space $[0,1]$
shows that this is ensured by~\eqref{eq:Y_hits_zero_wpp} below) and that there
exists an excursion measure for $Y$.
\begin{setting}[Scale function] \label{set:excursion_measure}
	Assume that Setting~\ref{set:coefficients} holds and that there exists a $y\in
	(0,1)$ such that
	\begin{equation}
		\lim_{(0, y)\ni \eps\to 0}\int_\eps^y
		\frac{h(x)}{\sigma^2(x)}\dd x \in \R.
	\end{equation}
	We define the functions $s,S\colon
	[0,1)\to[0,\infty)$ and $\at\colon[0,1]\to[0,\infty)$ by
  \begin{align}
		[0,1) \ni z \mapsto s(z) 
    &\defeq \exp\mleft( -\int_0^z
		\frac{h(x)}{\frac{1}{2}\sigma^2(x)} \dd x \mright),
		\label{eq:s}
    \\
		[0,1) \ni y \mapsto S(y)
		&\defeq\int_0^y s(z)\dd z,
		\label{eq:S}
    \\
		[0,1] \ni y \mapsto \at(y) 
    &\defeq yf(y,0).
		\label{eq:atilde}
  \end{align}
	We further assume that there exists an $x\in (0,1)$ with the property that
	\begin{equation}
		\int_0^x \frac{S(y)}{\sigma^2(y)s(y)}\dd y
    +
		\int_x^{1}\frac{\at(y)}{\sigma^2(y)s(y)}\dd y < \infty.
    \qedhere
		\label{eq:Y_hits_zero_wpp}
	\end{equation}
\end{setting}
For all $\eta \in C(\R,[0,1])$ we denote by $T_0(\eta) \defeq \inf\{t \in
(0,\infty) \colon \eta_t = 0\}\in[0,\infty]$ the first (nonnegative) time of
hitting zero and we define the \emph{set of excursions from zero} by
\begin{equation}
	U\defeq\left\{\eta\in C(\R,[0,1])\colon
		T_0(\eta) \in (0,\infty] \text{ and } \eta_t=0\text{ for all } t\in(-\infty,0] \cup
		[T_0,\infty)\right\}.
	\label{eq:excursion_space}
\end{equation}
Moreover, we denote by $D(\R,[0,1])$ the set of all \cadlag\ functions $f\colon \R
\to [0,1]$ and we define
\begin{equation}
	V \defeq \left\{ \eta \in D(\R,[0,1])\colon \eta_t = 0 \text{ for all } t
	\in (-\infty,0)\right\}.
	\label{eq:path_space}
\end{equation}
In the situation of Setting~\ref{set:excursion_measure}, Theorem~1 
in~\textcite{Hutzenthaler2009} adapted to the state space $[0,1]$ shows that
there exists a unique $\sigma$-finite measure $Q$ on $U$ satisfying the
following property: For every bounded and continuous function $F\colon
C([0,\infty),[0,1])\to\R$ with the property that there exists a $\dl>0$ such
that for all $\chi\in C([0,\infty),[0,1])$ with
$\sup_{t\in[0,\infty)}\chi_t<\dl$ it holds that $F(\chi) = 0$, it holds that
\begin{equation} \label{eq:Q}
	\lim_{(0,1) \ni \eps\to0}\frac{1}{S(\eps)}\Exp{F(Y) \given Y_0 = \eps}=\int F(\eta)
	\di Q(d\eta).
\end{equation}
The measure $Q$ is called the \emph{excursion measure} associated with $Y$;
see also~\textcite{PitmanYor1982}.
A straightforward adaptation of Lemma~9.8 in~\textcite{Hutzenthaler2009} to
the state space $[0,1]$ and assumption~\eqref{eq:Y_hits_zero_wpp} imply that
\begin{equation}
	\int \int_0^\infty \at(\chi_t) \dd t \di Q(d\chi) = \int_0^{1}
	\frac{\at(y)}{\frac{1}{2}\sigma^2(y)s(y)}\dd y < \infty.
	\label{eq:excursion_area}
\end{equation}

For the convergence result, we further assume the following
Setting~\ref{set:initial_moment} for the initial distributions which
establishes the sparse regime.
\begin{setting}[Sparse initial condition] \label{set:initial_moment}
	Assume that Setting~\ref{set:excursion_measure} holds. For every $i\in\N$ let $X_0(i)$
	be a $[0,1]$-valued random variable and for every $D\in\N$ let
	$\{(X^D_t(i))_{t\in[0,\infty)}
	\colon i\in[D]\}$ be a solution of~\eqref{eq:XD}
	such that a.s.\ $\sum_{i=1}^\infty
	X_0(i) < \infty$
	and such that for all $D\in\N$ and all $i\in[D]$ it holds
	a.s.~that $X_0^D(i) = X_0(i)$.
\end{setting}
Under the assumption of Setting~\ref{set:initial_moment}, we now construct the
associated forest of excursions. For that, let
$Y(i) = (Y_t(i))_{t\in[0,\infty)}$, $i\in\N$, be independent solutions
of~\eqref{eq:SDE_Y} coupled only through their initial states such that for
all $i\in\N$ it holds a.s.~that $Y_0(i) = X_0(i)$. These trajectories describe
the demes of the initial population.
Let $\{\Pi^\emptyset\} \cup \{ \Pi^{(n,s,\chi)}\colon
(n,s,\chi)\in\N_0\times[0,\infty)\times V\}$ be an
independent family of Poisson point processes on $[0,\infty) \times U$ with
intensity measures
\begin{equation}
	\Exp*{\Pi^\emptyset(dt\otimes d\eta)} = \mu\dd t\otimes Q(d\eta)
	\label{eq:intPi0}
\end{equation}
and
\begin{equation}
	\Exp*{\Pi^{(n,s,\chi)}(dt\otimes d\eta)} =
	\at(\chi_{t-s})\dd t\otimes Q(d\eta),\quad
	(n,s,\chi)\in\N_0\times[0,\infty)\times V.
	\label{eq:intPin}
\end{equation}
The points of $\Pi^\emptyset$ and $\Pi^{(n,s,\chi)}$ are interpreted as
tuples of times and paths providing the population times of new demes and
the evolution of the population inside these demes. Here, $\Pi^\emptyset$
describes the demes whose founders immigrated into the system, while
$\Pi^{(n,s,\chi)}$ describes the demes which descend from a deme with
population size trajectory $(\chi_{t-s})_{t\in[s,\infty)}$ and where the
ancestral lineages of individuals living on these demes have exactly $n\in\N$
migration events (only counting migration events within the system).
The $0$-th generation is the random $\sigma$-finite measure on
$[0,\infty)\times V$ defined through $\MCT^{(0)}\defeq
\sum_{i=1}^\infty\delta_{(0,Y(i))} + \Pi^\emptyset$.  For every $n\in\N_0$ the
$(n+1)$-th generation is the random $\sigma$-finite measure on $[0,\infty)\times U$
representing all
the demes which have been colonized from demes of the $n$-th generation, that
is, $\MCT^{(n+1)}\defeq \int\Pi^{(n,s,\chi)}\di \MCT^{(n)}(ds \otimes d\chi)$.
The \emph{forest of excursions} $\MCT$ is then the sum of all of these
measures $\MCT\defeq\sum_{n\in\N_0}\MCT^{(n)}$.

A straightforward adaptation of Lemma~5.2, Lemma~9.9, and Lemma~9.10
in~\textcite{Hutzenthaler2009} to the state space $[0,1]$ shows for every
$t\in[0,\infty)$ that the total mass $\int \chi_{t-s}\di \MCT(ds \otimes
d\chi)$ has finite expectation and is thus finite almost surely.
Moreover, in the case where $\mu=0$ (no immigration) and where there exists an
$x\in(0,1]$ such that for all $i\in\N$ it holds that $X_0(i) = x\1_{i=1}$,
Theorem~5 in \textcite{Hutzenthaler2009} yields that the total mass process
dies out (that is, $\int \chi_{t-s} \di \MCT(ds \otimes d\chi)$ converges to
zero in probability as $t \to\infty$) if and only if
\begin{equation}
	\int \int_0^\infty \at(\chi_t)\dd t \di Q(d\chi) \leq 1.
	\label{eq:tree_dies_out}
\end{equation}

\subsection{Main result: Propagation of chaos in the sparse regime}\label{ss:main_result}
In this subsection, we state our main theorem.
\begin{theorem}[Convergence to a forest of excursions]\label{thm:convergence}
	Assume that \textup{Setting~\ref{set:initial_moment}} holds and let $\MCT$
	be the forest of excursions constructed in
	\textup{Subsection~\ref{ss:tree_excursions}}.
	Then it holds that
	\begin{equation}
		\left(\sum_{i=1}^D X_t^{D}(i) \delta_{X_t^{D}(i)} \right)_{t\in[0,\infty)}
		\xRightarrow{D\to\infty} \left( \int \eta_{t-s} \delta_{\eta_{t-s}} \di\MCT(ds \otimes
		d\eta)\right)_{t\in[0,\infty)}
		\label{eq:convergence_thm}
	\end{equation}
	in the sense of convergence in distribution on
	$D([0,\infty),\MCM_\textup{f}([0,1]))$.
\end{theorem}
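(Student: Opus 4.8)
The plan is to carry out the two-step reduction announced in the introduction. First I would introduce, for each $D\in\N$ and each generation depth $k\in\N$, a ``loop-free'' process $Z^{D,k}$ which mimics~\eqref{eq:XD} but in which only the first $k$ generations of migrations are retained and every migration founds a deme with a fresh index, so that no ancestral lineage ever revisits a deme. Concretely $Z^{D,k}$ is a finite, genealogically organized collection of one-dimensional diffusions: the roots are independent copies $Y(i)$ of the solution of~\eqref{eq:SDE_Y} with $Y_0(i)=X_0(i)$, and a deme of generation $n<k$ with trajectory $\chi$ emits founder mass into the empty demes at total rate $\at(\chi_t)$ at time $t$ — because a recipient deme has frequency $\approx 0$, the inflow it receives from a source deme of frequency $\chi_t$ is $\approx\frac1D\chi_t f(\chi_t,0)=\frac1D\at(\chi_t)$, and there are $\approx D$ empty demes. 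I would then establish Lemma~\ref{l:convergence_of_the_loop_free_process}: as $D\to\infty$, the weighted empirical measure read off from $Z^{D,k}$ converges in distribution on $D([0,\infty),\MCM_\textup{f}([0,1]))$ to $\bigl(\int\eta_{t-s}\delta_{\eta_{t-s}}\di\MCT^{(\le k)}(ds\otimes d\eta)\bigr)_{t\in[0,\infty)}$, where $\MCT^{(\le k)}\defeq\smallsum_{n=0}^k\MCT^{(n)}$; note that empty demes are invisible here because they carry weight $0$.

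The heart of this loop-free convergence is the emergence of the excursion measure $Q$ and of the Poisson structure~\eqref{eq:intPi0}--\eqref{eq:intPin}. Each newly colonized deme starts from a frequency tending to $0$ (of order $1/D$, since the founder mass rate $\at(\chi_t)$ of a source deme is split among order $D$ empty recipients); rescaling such a trajectory by $1/S(\eps)$ and invoking the defining property~\eqref{eq:Q} of $Q$ together with~\eqref{eq:excursion_area} shows that the point process of (colonization time, rescaled excursion) emitted by a source deme with trajectory $\chi$ converges to a Poisson point process with intensity $\at(\chi_{t-s})\dd t\otimes Q(d\eta)$, while the immigration demes contribute $\Pi^\emptyset$ with intensity $\mu\dd t\otimes Q(d\eta)$ because $\lim_{D\to\infty}Dh_D(0)=\mu$. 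Independence across source demes is automatic in the loop-free world since distinct demes are driven by distinct Brownian motions, so generation by generation (there are only $k+1$ of them, and by the finite-expectation bound on the total mass quoted after the construction of $\MCT$ only order one demes ever carry non-negligible mass) one upgrades this to joint convergence of the whole depth-$k$ forest. Tightness on the Skorokhod space of $\MCM_\textup{f}([0,1])$-valued paths follows from uniform-in-$D$ moment bounds on the total mass $\smallsum_i Z^{D,k}_t(i)$ and an increment estimate, both inherited from the corresponding estimates for $Y$ and its excursions; passing from the martingale problem for the diffusive part and the Poisson limit for the migrations to the empirical-measure statement is then routine.

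The second step is Lemma~\ref{l:replace_fidi}: on a common probability space I would couple $X^D$ with $Z^{D,k}$ by matching, through a genealogical bijection, the occupied demes of the two systems and driving matched demes by the same Brownian motion, and show that the discrepancy vanishes as $D\to\infty$ after first sending $k\to\infty$ uniformly in $D$. Two error sources must be controlled: the neglected generations $n>k$, whose contribution to the total mass is small uniformly in $D$ by the mass bound; and the ``loops'', i.e.\ the effect of routing migrants to fresh demes rather than according to~\eqref{eq:XD}. The loop error is small because the migration rate $\frac1D X^D_t(j)f(X^D_t(j),X^D_t(i))$ into any single deme is $O(1/D)$, so over a bounded time interval the mass flowing ``back'' into an already-occupied or previously-visited deme is $O(1/D)$ per such deme, and — with probability tending to one and uniformly over the relevant range — only order one demes are occupied, so the aggregate loop mass is $O(1/D)$. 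Quantitatively I would bound $\Exp{\smallsum_i\abs{X^D_t(i)-\widehat Z^{D,k}_t(i)}}$ (with $\widehat Z^{D,k}$ the relabelled loop-free process) by differencing the two integrated SDEs, using the Lipschitz constants $L_f,L_h,L_\sigma$ from Setting~\ref{set:coefficients} for the drift, a Yamada--Watanabe-type argument for the $\sqrt{\sigma^2(\cdot)}$ diffusion term to handle the non-Lipschitz square root at the degenerate boundary, and closing with Gr\"onwall's inequality, the inhomogeneity being the $O(1/D)$ loop contribution plus the $k$-truncation tail.

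Combining the two lemmas yields convergence of finite-dimensional distributions in~\eqref{eq:convergence_thm}, and the tightness established for the loop-free approximations transfers to $X^D$ through the coupling, giving convergence in distribution on $D([0,\infty),\MCM_\textup{f}([0,1]))$. The hard part, I expect, will be the bookkeeping in the second step: constructing the genealogical bijection between the occupied demes of $X^D$ and those of $Z^{D,k}$ in a measurable, dynamically consistent way, and making the heuristic ``only order one demes are occupied'' rigorous and uniform enough that the $O(1/D)$ loop estimate genuinely closes — all while the diffusion coefficient degenerates at the trap $0$, where the standard Lipschitz machinery fails and one must lean on the excursion-theoretic estimates adapted from~\textcite{Hutzenthaler2009}.
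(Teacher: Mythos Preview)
Your overall two-step architecture matches the paper, and your description of the loop-free limit (Poisson structure via~\eqref{eq:Q}, induction on generation depth, tightness from moment bounds) is essentially what Section~\ref{sec:convergence_loop_free} does. But your second step has a genuine gap.

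You propose to control the distance between $X^D$ and the loop-free process in $L^1$ via Yamada--Watanabe plus Gr\"onwall. The paper explicitly explains why this does \emph{not} work for general $\sigma^2$: in the migration-level decomposition~\eqref{eq:XD_k}, the diffusion coefficient of $X^{D,k}(i)$ is $\sqrt{\tfrac{X^{D,k}(i)}{\sum_m X^{D,m}(i)}\sigma^2\bigl(\sum_m X^{D,m}(i)\bigr)}$, while that of $Z^{D,k}(i)$ in~\eqref{eq:ZD_k} is $\sqrt{\sigma^2(Z^{D,k}(i))}$. Only when $\sigma^2(x)=\beta x$ does the first expression collapse to $\sqrt{\beta X^{D,k}(i)}$, decoupling the levels and letting the Yamada--Watanabe local-time term vanish. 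For general $\sigma^2$ the squared diffusion discrepancy involves \emph{all} levels $X^{D,m}(i)$, $m\neq k$, and cannot be bounded by $|X^{D,k}(i)-Z^{D,k}(i)|$; the $L^1$ estimate does not close. This is precisely the obstacle the paper flags in Subsection~1.4, and it is why Lemma~\ref{l:replace_fidi} is proved by a \emph{weak-error} argument instead: one writes $u^D(t,x)=(S^D_{T-t}\psi^D)(x)$ for the loop-free semigroup, applies It\^o's formula to $u^D(t,X^{D,\cdot}_t)$, and uses uniform-in-$D$ $C^2$-bounds on $S^D_t$ (Lemma~\ref{l:regularity_semigroup}) together with the ``one level per deme'' estimate (Lemma~\ref{l:one_generation_per_island}) and the mass-concentration estimate (Lemma~\ref{l:concentration}) to show the drift--diffusion mismatch terms in~\eqref{eq:weak_error} vanish as $D\to\infty$.

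A secondary point: the paper's loop-free processes $Z^{D,k}$ do \emph{not} live on fresh demes; they are indexed by the same $[D]\times\N_0$ as the migration-level processes and are coupled through the same Brownian motions $W^k(i)$. Your ``genealogical bijection'' between occupied demes is unnecessary and would add measurability headaches; the paper's coupling is purely analytic, and the genealogical picture (each deme carries essentially one level) is recovered a posteriori via Lemma~\ref{l:one_generation_per_island}.
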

The form of the left-hand side in~\eqref{eq:convergence_thm} might
look unfamiliar at first glance. We note that the sequence $\{(\sum_{i=1}^D
\delta_{X^D_t(i)})_{t\in[0,\infty)}\colon D\in\N\}$ has no chance of being
relatively compact as stochastic processes with values in
$\MCM_\textup{f}([0,1])$, as an infinite amount of mass piles up near zero
in the limit as $D\to\infty$. By weighting the point masses
as in~\eqref{eq:convergence_thm}, we avoid this problem and retain
a well-understood state space. Alternatively, one
can change the (topology of the) state space. This is done
in~\textcite{Hutzenthaler2012}, where $\sigma$-finite measures on $(0,1]$
with the vague topology are used to prove convergence of $\{(\sum_{i=1}^D
\delta_{X^D_t(i)})_{t\in[0,\infty)}\colon D\in\N\}$ instead.

We emphasize that the limiting object $\MCT$ is easier to analyze than the
solution of the SDE~\eqref{eq:XD} due to its tree structure and since general
branching processes are very well understood, resulting, for example, in the
criterion~\eqref{eq:tree_dies_out}. Further properties of the limiting process
in the case $\mu = 0$ without immigration are investigated
in~\textcite{Hutzenthaler2009}.

\subsection{Main ideas and structure of the proof of Theorem~\ref{thm:convergence}}
In Subsection~\ref{ss:migration_levels} we decompose~\eqref{eq:XD} into
processes with migration levels in the sense that the next higher migration
level is driven by successful migrations essentially only from the migration
level directly below, see~\eqref{eq:XD_k} below.
We then couple these migration level processes to ``loop-free''
processes which we obtain by pretending that on a fixed deme all individuals have the same
migration level, which turns~\eqref{eq:XD_k} into~\eqref{eq:ZD_k} below. We
show in Subsection~\ref{ss:convergence_loop_free} that these loop-free
processes converge in the limit as $D\to\infty$ to the forest of excursions.
To prove this result, we apply induction on the number of migration steps, which
is useful since, conditionally on the lower migration levels, the processes in the next higher
migration level in the loop-free processes evolve as independent diffusions.
The convergence of independent diffusions of this kind is obtained in
Subsection~\ref{ss:poisson_limit} in
Lemma~\ref{l:vanishing_immigration_weak_process} below.
This result can be seen as a functional Poisson limit theorem and follows
essentially from Lemma~\ref{l:vanishing_immigration} below which is proved
in~\textcite{Hutzenthaler2012} by reversing time. 

The principal idea of decomposing into loop-free processes is taken
from~\textcite{Hutzenthaler2012}. There, however, the considerations were
restricted to the ``classical'' migration term of the form
$\frac{1}{D}\sum_{j=1}^D(X^D_t(j) - X^D_t(i))$, while here we allow for
``nonlinear'' migrations of the form $\frac{1}{D}\sum_{j=1}^D
X^D_t(j)f(X^D_t(j),X^D_t(i))$. By considering the case where for all $x,y \in[0,1]$ it
holds that $f(y,x) = 1$ and where for all $x \in [0,1]$ and all $D\in\N$ we
replace $h_D(x)$ by $h_D(x) - x$ in~\eqref{eq:XD}, we recover the classical
migration term in our framework.

To complete the proof of Theorem~\ref{thm:convergence}, it remains to show
that the migration level processes and the loop-free processes have the same
limit as $D\to\infty$. This is carried out in
Subsection~\ref{ss:reduction_loop_free}. For this,~\textcite{Hutzenthaler2012}
is restricted to the case where $\sigma^2$ is of the form $\sigma^2(x) = \beta
x$ for a constant $\beta \in (0,\infty)$.
This assumption crucially simplified the situation, as this completely decouples
the infinitesimal variances of the migration level processes and allows to
prove that the $L^1$-distance between the migration level processes and the
loop-free processes converges to zero as $D\to\infty$ using Gronwall's
inequality.  In the more general situation of this paper, we instead show that
a certain form of a weak distance between the migration level processes and
the loop-free processes converges to zero as $D\to\infty$, see
Lemma~\ref{l:replace_fidi} below. The proof relies on applying It{\^o}'s
formula to suitable evaluations of the semigroup of the loop-free processes,
see~\eqref{eq:weak_error} below.


%
%
\subsection{Application: Altruistic defense traits} \label{ss:altruism}
Let $\alpha, \beta, \kappa \in (0,\infty)$, let $\mu_\infty \in [0,\infty)$,
let $a\in (1,\infty)$, let
$(\mu_D)_{D\in\N} \subseteq [0,1]$ be such that $\lim_{D\to\infty}
D\mu_D = \mu_\infty$ and such that for all $D\in\N$ it holds that $D\mu_D \leq
2 \mu_\infty$, and let $b
\in C^3([0,1],\R)$ be such that $b(1) = 0 \leq b(0)$. Let $f\colon [0,1]^2 \to \R$, $h\colon [0,1]\to \R$, $h_D\colon [0,1]\to\R$,
$D\in\N$, and
$\sigma^2\colon [0,1] \to [0,\infty)$ be the functions satisfying for all
$D\in\N$ and all $x,y
\in [0,1]$ that $f(y,x) =
\kappa(a-x)\frac{a-x}{a}\frac{1}{a-y}$, that $h(x) = -\kappa
x(a-x)\frac{1}{a} - \alpha x(1-x)$, that $h_D(x) = h(x) + \mu_D b(x)$, and
that $\sigma^2(x) = \beta (a-x)x(1-x)$.
Then it holds for all $x,y \in [0,1]$ that $yf(y,x) + h(x) =
\kappa\frac{a-x}{a-y}(y-x) - \alpha x(1-x)$.
Thus, for every $D\in\N$,~\eqref{eq:XD} specializes to the SDE
\begin{equation} \label{eq:SDE_HJM}
	\begin{split}
		dX_t^D(i) ={}
		&\frac{\kappa}{D}\sum_{j=1}^{D}\tfrac{a-X^D_t(i)}{a-X^D_t(j)}\bigl(
			X^D_t(j)-X^D_t(i) \bigr)\dd t - \alpha
			X^D_t(i)\bigl(1-X^D_t(i)\bigr)\dd t + \mu_D b\bigl(X^D_t(i)\bigr) \dd t \\
		&+ \sqrt{\beta\bigl(a-X^D_t(i)\bigr)X^D_t(i)\bigl(1-X^D_t(i)\bigr)}\dd W_t(i),\quad t\in
		(0,\infty),\,i\in[D].
	\end{split}
\end{equation}
Theorem 1.3 in~\textcite{HutzenthalerJordanMetzler2015} shows (in the case
where for all $D\in\N$ it holds that
$\mu_D = 0$, that is, the case of no mutation) that
the solution process of the SDE~\eqref{eq:SDE_HJM} arises as a diffusion limit
of the relative frequency of altruistic individuals in the host population in
a Lotka-Volterra type host-parasite model.
The reason for the functional form of the coefficient functions is as follows.
Since individuals migrate at fixed rate, relative frequencies (write it as $\tfrac{A_t^{D,N}(i)}{H_t^{D,N}(i)}$)
of altruistic individuals in the host population (let $H_t^{D,N}(i)$ denote the total number of hosts on deme $i\in[D]$
at time $t\in[0,\infty)$) migrate at rate $\tfrac{\kappa}{N}m(i,j)\tfrac{H_t^{D,N}(j)}{H_t^{D,N}(i)}$ from deme $j$ to deme $i$.
The total host population evolves much faster (separation of time scales) than
relative frequencies and stabilizes locally in time
at $\tfrac{1}{\beta(a-x)}$ if $x$ is the current frequency of altruists and where $a,\beta$ are suitable parameters.
Thus, if time is measured in multiples of $N$, migration rates of frequencies converge to $w\!-\!\lim_{N\to\infty}N\cdot\tfrac{\kappa}{N}m(i,j)
\tfrac{H_{tN}^{D,N}(j)}{H_{tN}^{D,N}(i)}=\kappa m(i,j)\tfrac{\beta(a-X_t^{D}(i))}{\beta(a-X_t^{D}(j))}$.
The resampling rate in the Wright-Fisher diffusion is inverse-proportional to the total mass and this explains why the squared diffusion term is
  $w\!-\!\lim_{N\to\infty} \tfrac{1}{H_{tN}^{D,N}(i)}\tfrac{A_{tN}^{D,N}(i)}{H_{tN}^{D,N}(i)}(1-\tfrac{A_{tN}^{D,N}(i)}{H_{tN}^{D,N}(i)})
  =\beta(a-X_t^{D}(i))X_t^{D}(i)(1-X_t^{D}(i))$.

Define $\mu \defeq \mu_\infty b(0)$.
We check the assumptions of Theorem~\ref{thm:convergence}:
Indeed, $\mu \in [0,\infty)$, $f\in C^3([0,1]^2,\R)$, $h,\sigma^2\in C^3([0,1],\R)$, for all
$D\in\N$ it holds that $h_D \in C^3([0,1],\R)$, it holds that
$\sup_{D\in\N}\norm{h_D}_{C^2} \leq \norm{h}_{C^2} + \norm{b}_{C^2} < \infty$
and
that $\lim_{D\to\infty}
Dh_D(0) = \lim_{D\to\infty} D \mu_D b(0) = \mu_\infty b(0) = \mu$, for all $x\in[0,1]$ it holds that $\lim_{D\to\infty} h_D(x) =
h(x)$,
for all $D\in\N$ and all $y\in(0,1]$ it holds that $yf(y,1)+h_D(1) = yf(y,1)
+ h(1) = \frac{\kappa y(a-1)^2}{a(a-y)}-\frac{\kappa (a-1)}{a} \leq 0$,
that $f(y,0)=\frac{\kappa a}{a-y}>0$, that $Dh_D(0) = D \mu_D b(0) \leq 2
\mu_\infty b(0) = 2\mu$, that $Dh_D(0) = D \mu_D b(0) \geq 0$, that
$h(0) = 0$, and that $h(1)=-\frac{\kappa(a-1)}{a}<0$.
Moreover, it holds that $\sigma^2(0)=0=\sigma^2(1)$ and for all
$x\in(0,1)$ that $\sigma^2(x)>0$.
Thus, Setting~\ref{set:coefficients} is satisfied. Furthermore, it holds that
\begin{equation}
	\begin{split}
		\lim_{(0,\frac{1}{2}) \ni \eps \to 0} \int_\eps^{\frac{1}{2}}
		\tfrac{h(x)}{\sigma^2(x)} \dd x
		&= \lim_{(0,\frac{1}{2}) \ni \eps \to 0} \int_\eps^{\frac{1}{2}} \tfrac{-\kappa
		x(a-x)\frac{1}{a} - \alpha x(1-x)}{\beta (a-x)x(1-x)} \dd x\\
		&= \lim_{(0,\frac{1}{2}) \ni \eps \to 0} \int_\eps^{\frac{1}{2}}
		\tfrac{-\kappa}{a\beta(1-x)} -
		\tfrac{\alpha}{\beta(a-x)} \dd x\\
		&= \lim_{(0,\frac{1}{2}) \ni \eps \to 0} \left( \tfrac{\kappa}{a
		\beta}\bigl(\ln(1-\tfrac{1}{2}) - \ln(1-\eps)\bigr) +
		\tfrac{\alpha}{\beta}\bigl( \ln(a-\tfrac{1}{2}) - \ln(a-\eps) \bigr)
		\right)\\
		&= \tfrac{\kappa}{a\beta} \ln(1-\tfrac{1}{2}) +
		\tfrac{\alpha}{\beta}\bigl(\ln(a-\tfrac{1}{2}) - \ln(a)\bigr) \in \R.
	\end{split}
	\label{eq:altruism_set1}
\end{equation}
Let $s,S\colon [0,1)\to[0,\infty)$ and $\at \colon [0,1]\to[0,\infty)$ be
given by~\eqref{eq:s}, \eqref{eq:S}, and~\eqref{eq:atilde}, respectively. Then
it holds for all
$z \in [0,1)$ that
\begin{equation}
	s(z) = \exp\left( \int_0^z \tfrac{2\kappa}{a\beta(1-x)} +
	\tfrac{2\alpha}{\beta(a-x)} \dd x\right) =
	(1-z)^{-\frac{2\kappa}{a\beta}}\left( \tfrac{a-z}{a}
	\right)^{-\frac{2\alpha}{\beta}}
	\label{eq:s_altruism}
\end{equation}
and
\begin{equation}
	S(z) = \int_0^z s(x) \dd x \leq z s(z).
	\label{eq:S_estimate}
\end{equation}
We obtain from~\eqref{eq:S_estimate} that
\begin{equation}
	\int_0^{\frac{1}{2}} \tfrac{S(y)}{\sigma^2(y) s(y)} \dd y \leq
	\int_0^{\frac{1}{2}} \tfrac{1}{\beta(a-y)(1-y)} \dd y \leq
	\tfrac{1}{2\beta(a-\frac{1}{2})(1-\frac{1}{2})} < \infty
	\label{eq:altruism_set2}
\end{equation}
and it follows from~\eqref{eq:s_altruism} and from the fact that
$\frac{2\kappa}{a\beta} - 1 \in (-1,\infty)$ that
\begin{equation}
	\begin{split}
		\int_{\frac{1}{2}}^1 \tfrac{\at(y)}{\sigma^2(y)s(y)} \dd y
		&= \int_{\frac{1}{2}}^1 \tfrac{\frac{\kappa a y}{a-y}}{\beta (a-y)y(1-y)}
		(1-y)^{\frac{2\kappa}{a\beta}}
		\left(\tfrac{a-y}{a}\right)^{\frac{2\alpha}{\beta}} \dd y \\
		&= \tfrac{\kappa a}{\beta} a^{-\frac{2\alpha}{\beta}} \int_{\frac{1}{2}}^1
		(1-y)^{\frac{2\kappa}{a\beta} - 1}
		(a-y)^{\frac{2\alpha}{\beta} - 2} \dd y \\
		&\leq \tfrac{\kappa a}{\beta} a^{-\frac{2\alpha}{\beta}} \left(
		(a-\tfrac{1}{2})^{\frac{2\alpha}{\beta} - 2} + (a-1)^{\frac{2\alpha}{\beta}
		- 2}\right) \int_{\frac{1}{2}}^1 (1-y)^{\frac{2\kappa}{a\beta} - 1} \dd y
		< \infty.
	\end{split}
	\label{eq:altruism_set3}
\end{equation}
Hence, Setting~\ref{set:excursion_measure} is satisfied. Therefore,
Theorem~\ref{thm:convergence} is applicable to the SDE~\eqref{eq:SDE_HJM} for
any initial configuration satisfying Setting~\ref{set:initial_moment}.

For the remainder of this subsection we consider the case where $\mu_\infty =
0$ and where there exists an $x\in(0,1]$ such that for all
$D\in\N$ and all $i\in[D]$ it holds that $X^D_0(i) = x\1_{i=1}$.
We obtain from \eqref{eq:altruism_set1},
\eqref{eq:altruism_set2}, and \eqref{eq:altruism_set3} together with a
straightforward adaptation of Lemma~9.6, Lemma~9.9, and Lemma~9.10
in~\textcite{Hutzenthaler2009} to the state space $[0,1]$ that the assumptions
of Theorem~5 in~\textcite{Hutzenthaler2009} are satisfied. An application of
the latter theorem shows that the total
mass process $(\int \chi_{t-s} \di \MCT(ds \otimes d\chi))_{t\in[0,\infty)}$
dies out (that is, it converges to zero in probability as $t\to\infty$) if and
only if~\eqref{eq:tree_dies_out} holds.
Equations~\eqref{eq:excursion_area} and~\eqref{eq:s_altruism} yield that
\begin{equation}
	\int \int_0^\infty \at(\chi_t)\dd t \di Q(d\chi) = \int_0^1
	\tfrac{\at(y)}{\frac{1}{2}\sigma^2(y)s(y)} \dd y
	= \int_0^1 \tfrac{\frac{\kappa
	ay}{a-y}}{\frac{1}{2}\beta(a-y)y(1-y)}(1-y)^{\frac{2\kappa}{a\beta}}\left(
	\tfrac{a-y}{a} \right)^{\frac{2\alpha}{\beta}} \dd y.
	\label{eq:altruism_excursion_area}
\end{equation}
This together with~\eqref{eq:tree_dies_out} and the fact that $\frac{2\kappa}{a\beta}\int_0^1
(1-y)^{\frac{2\kappa}{a\beta} - 1} \dd y = 1$ proves that the total mass
process dies out if and only if
\begin{equation}
	\begin{split}
		0 \geq \int_0^1 \tfrac{\frac{\kappa
		ay}{a-y}}{\frac{1}{2}\beta(a-y)y(1-y)}(1-y)^{\frac{2\kappa}{a\beta}}\left(
		\tfrac{a-y}{a} \right)^{\frac{2\alpha}{\beta}} \dd y - 1
		&= \tfrac{2\kappa}{a\beta} \int_0^1 (1-y)^{\frac{2\kappa}{a\beta} - 1}
		\left( \tfrac{a-y}{a} \right)^{\frac{2\alpha}{\beta} - 2} \dd y - 1\\
		&= \tfrac{2\kappa}{a\beta} \int_0^1 (1-y)^{\frac{2\kappa}{a\beta} - 1}
		\left( \left( \tfrac{a-y}{a} \right)^{\frac{2\alpha}{\beta} - 2} - 1
		\right) \dd y.
	\end{split}
\end{equation}
Consequently, the total mass process dies out if and only if $\alpha \geq
\beta$.
%

\section{Convergence of the loop-free
processes}\label{sec:convergence_loop_free}

\subsection{Migration level processes and loop-free processes}
\label{ss:migration_levels}
Throughout this subsection, assume that Setting~\ref{set:coefficients} holds.
To prove Theorem~\ref{thm:convergence}, we use a decomposition into migration
levels.
We say that an individual has \emph{migration level} $k\in\N_0$ at time
$t\in[0,\infty)$ if its ancestral lineage up to time $t$ contains exactly $k$ migration
steps (within the system).
To formalize this, we define for all $D\in\N$ that $X^{D,-1} \defeq 0$ and
consider for every $D\in\N$ the SDE
\begin{equation}
	\begin{split} \label{eq:XD_k}
		dX_t^{D,k}(i) ={}
		&\frac{1}{D} \sum_{j=1}^D X_t^{D,k-1}(j)
			f\left( \sum_{m\in\N_0}X_t^{D,m}(j),
			\sum_{m\in\N_0}X_t^{D,m}(i)\right)\dd t \\
		&+\frac{X_t^{D,k}(i)}{\sum_{m\in\N_0}X_t^{D,m}(i)}\hht_D\left(
			\sum_{m\in\N_0}X_t^{D,m}(i)
			\right)\dd t + \1_{k=0}h_D(0)\dd t \\
		&+\sqrt{\frac{X_t^{D,k}(i)}{\sum_{m\in\N_0}X_t^{D,m}(i)}\sigma^2\left(
			\sum_{m\in\N_0}X_t^{D,m}(i) \right)}\dd W_t^k(i),
		\quad t\in(0,\infty),\, (i,k)\in[D]\times\N_0,
	\end{split}
\end{equation}
where
$\{W^k(i)\colon (i,k)\in\N\times\N_0\}$ is a set of
independent standard Brownian motions.
Throughout this paper, we consider weak solutions of~\eqref{eq:XD_k} with
initial distribution and values in
$\{(x_{i,k})_{(i,k)\in[D]\times\N_0} \in
[0,1]^{[D]\times\N_0} \colon \sum_{k\in\N_0}x_{i,k} \in [0,1]
\text{ for all } i\in[D]\}$. Existence of such solutions can be shown as in~\textcite[Lemma~4.3]{Hutzenthaler2012}.
These processes will be referred to as
\emph{migration level processes}.

The following lemma shows that~\eqref{eq:XD} can be recovered
from~\eqref{eq:XD_k} by summing over all migration levels.
\begin{lemma}[Decomposition into migration levels]
\label{l:decomposition}
	Assume that \textup{Setting~\ref{set:coefficients}} holds, let $D\in\N$, let
	$\{(X_t^{D,k}(i), W_t^{k}(i))_{t\in[0,\infty)} \colon
	(i,k)\in[D]\times\N_0\}$ be a weak solution
	of~\eqref{eq:XD_k},
	and let $\{(W_t(i))_{t\in[0,\infty)} \colon i\in[D]\}$ be continuous
	adapted processes
	satisfying for all $i\in[D]$ and all $t\in[0,\infty)$ that
	a.s.
	\begin{equation}
		\begin{split}
			W_t(i)={} &\int_0^t \1_{\bigl\{\sum_{m\in\N_0}X_s^{D,m}(i)>0\bigr\}}
			\sum_{k\in\N_0}
			\sqrt{\frac{X_s^{D,k}(i)}{\sum_{m\in\N_0}X_s^{D,m}(i)}}\dd W_s^{k}(i)\\
			&+\int_0^t\1_{\bigl\{\sum_{m\in\N_0}X_s^{D,m}(i)=0\bigr\}}\dd
			W_s^{0}(i).
		\end{split}
		\label{eq:corresponding_BM}
	\end{equation}
	Then $\{(W(i))_{t\in[0,\infty)}\colon i\in[D]\}$ defines a
	$D$-dimensional standard Brownian motion and
	the process $\{(\Xt_t^D(i))_{t\in[0,\infty)}\colon
	i\in[D]\}$ defined for all $i\in[D]$ and all $t\in[0,\infty)$ by
	\begin{equation}
		\Xt_t^{D}(i) \defeq \sum_{k\in\N_0}X_{t}^{D,k}(i)
		\label{eq:all_levels}
	\end{equation}
	is the unique solution of~\eqref{eq:XD} with Brownian motion given
	by~\eqref{eq:corresponding_BM}.
\end{lemma}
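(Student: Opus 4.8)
The plan is to verify the two assertions in turn: first that $\{W(i)\}_{i\in[D]}$ is a standard $D$-dimensional Brownian motion, and then that $\Xt^D$ solves~\eqref{eq:XD} with this driving noise, uniqueness being already guaranteed by Setting~\ref{set:coefficients} (Theorem~3.2 in~\textcite{ShigaShimizu1980}). For the first part I would apply Lévy's characterization: each $W(i)$ is by construction a continuous adapted process, and it is a sum of stochastic integrals against the independent Brownian motions $W^k(i)$, so it is a continuous local martingale with $W_0(i)=0$. Its quadratic variation at time $t$ is $\int_0^t \1_{\{\Sigma_s(i)>0\}}\sum_{k\in\N_0}\frac{X_s^{D,k}(i)}{\Sigma_s(i)}\dd s + \int_0^t \1_{\{\Sigma_s(i)=0\}}\dd s$, where I write $\Sigma_s(i)\defeq\sum_{m\in\N_0}X_s^{D,m}(i)$; on $\{\Sigma_s(i)>0\}$ the integrand $\sum_k X_s^{D,k}(i)/\Sigma_s(i)$ equals $1$, so the bracket is exactly $t$. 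For the cross-variations, $\langle W(i),W(j)\rangle\equiv 0$ for $i\neq j$ because the families $\{W^k(i)\colon k\in\N_0\}$ and $\{W^k(j)\colon k\in\N_0\}$ are independent (hence orthogonal as martingales). Thus $\{W(i)\}_{i\in[D]}$ is a $D$-dimensional Brownian motion. One should note the implicit measurability point, that such continuous adapted processes satisfying~\eqref{eq:corresponding_BM} exist — but this is granted in the statement.

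For the second part I would substitute $\Xt^D(i)=\sum_{k\in\N_0}X^{D,k}(i)$ into the right-hand side of~\eqref{eq:XD} and sum~\eqref{eq:XD_k} over $k\in\N_0$. Summing the first drift term over $k$ telescopes the migration indices: $\sum_{k\in\N_0}\frac{1}{D}\sum_j X_t^{D,k-1}(j)f(\Sigma_t(j),\Sigma_t(i)) = \frac{1}{D}\sum_j \bigl(\sum_{k\in\N_0}X_t^{D,k-1}(j)\bigr)f(\Xt_t^D(j),\Xt_t^D(i)) = \frac{1}{D}\sum_j \Xt_t^D(j)f(\Xt_t^D(j),\Xt_t^D(i))$, using $X^{D,-1}\equiv 0$ so that the $k=0$ term contributes nothing and the shifted sum recovers the full sum. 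Summing the second and third drift terms over $k$ gives $\hht_D(\Xt_t^D(i)) + h_D(0) = h_D(\Xt_t^D(i))$ by the definition of $\hht_D$ in Setting~\ref{set:coefficients}, with the factor $\sum_k \frac{X_t^{D,k}(i)}{\Sigma_t(i)} = 1$ on $\{\Sigma_t(i)>0\}$ and the convention handling $\{\Sigma_t(i)=0\}$ (where $\hht_D(0)=0$ anyway, so both sides vanish). The diffusive part requires identifying $\sum_{k\in\N_0}\sqrt{\frac{X_t^{D,k}(i)}{\Sigma_t(i)}\sigma^2(\Sigma_t(i))}\dd W_t^k(i)$ with $\sqrt{\sigma^2(\Xt_t^D(i))}\dd W_t(i)$: writing the left side as $\sqrt{\sigma^2(\Sigma_t(i))}\sum_k\sqrt{\frac{X_t^{D,k}(i)}{\Sigma_t(i)}}\dd W_t^k(i)$, the remaining sum is precisely $\dd W_t(i)$ by~\eqref{eq:corresponding_BM} on $\{\Sigma_t(i)>0\}$, while on $\{\Sigma_t(i)=0\}$ the coefficient $\sqrt{\sigma^2(0)}=0$ kills the term. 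Hence $\dd\Xt_t^D(i) = \frac{1}{D}\sum_j \Xt_t^D(j)f(\Xt_t^D(j),\Xt_t^D(i))\dd t + h_D(\Xt_t^D(i))\dd t + \sqrt{\sigma^2(\Xt_t^D(i))}\dd W_t(i)$, which is~\eqref{eq:XD}. Finally, $\Xt_0^D(i)=\sum_k X_0^{D,k}(i)\in[0,1]$ by the constraint on the state space of~\eqref{eq:XD_k}, so $\Xt^D$ has the right initial values and stays in $[0,1]^D$, and uniqueness of strong solutions identifies it with the solution of~\eqref{eq:XD}.

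The only genuinely delicate point is the careful bookkeeping of the indicator sets $\{\Sigma_t(i)=0\}$ versus $\{\Sigma_t(i)>0\}$, so that the stochastic integrals in~\eqref{eq:corresponding_BM} are well defined (the integrand $\sqrt{X_s^{D,k}(i)/\Sigma_s(i)}$ is bounded by $1$ on $\{\Sigma_s(i)>0\}$ and the indicator switches it off elsewhere, so all integrals are genuine $L^2$ martingale integrals) and that the reassembly of~\eqref{eq:XD_k} into~\eqref{eq:XD} respects these sets — in particular that on $\{\Sigma_t(i)=0\}$ every summand carrying the factor $X_t^{D,k}(i)/\Sigma_t(i)$ or $\sqrt{\sigma^2(\Sigma_t(i))}$ vanishes identically, which is where $\sigma^2(0)=0$, $\hht_D(0)=0$, and $h_D(0)\dd t$ surviving only at level $k=0$ all matter. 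Everything else is routine telescoping and an application of Lévy's characterization.
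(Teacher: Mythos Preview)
Your proposal is correct and follows exactly the same approach as the paper's proof: Lévy's characterization via the cross-variation computation $\langle W(i),W(j)\rangle_t=\delta_{ij}t$, summation of~\eqref{eq:XD_k} over $k\in\N_0$ to recover~\eqref{eq:XD}, and pathwise uniqueness from Theorem~3.2 in~\textcite{ShigaShimizu1980}. You have in fact spelled out the telescoping of drift and diffusion terms and the handling of the indicator sets in more detail than the paper itself, which dispatches these points in a single sentence.
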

\begin{proof}
	The processes $W(i)$, $i \in [D]$, defined
	by~\eqref{eq:corresponding_BM} are continuous local martingales whose
	cross-variation processes satisfy for all $i,j\in[D]$ and all
	$t\in[0,\infty)$ that $\langle W(i), W(j)\rangle_t = \delta_{ij}t$.
	L{\'e}vy's characterization of Brownian motion implies that $W$ is a
	$D$-dimensional standard Brownian motion. Moreover, it follows from
	summing~\eqref{eq:XD_k} over $k\in\N_0$ that
	$\Xt^{D}$ satisfies~\eqref{eq:XD} with Brownian motion given
	by~\eqref{eq:corresponding_BM}. Pathwise
	uniqueness of the SDE~\eqref{eq:XD} in the situation of
	Setting~\ref{set:coefficients} follows from Theorem~3.2
	in~\textcite{ShigaShimizu1980}. This
	finishes the proof of Lemma~\ref{l:decomposition}.
\end{proof}
In the limit as $D\to\infty$, the migration level processes are
essentially loop-free in the following sense. We define for all $D\in\N$ that
$Z^{D,-1} \defeq 0$ and consider for every $D\in\N$ the SDE
\begin{equation}
	\begin{split} \label{eq:ZD_k}
		dZ_t^{D,k}(i) ={}
		&\frac{1}{D} \sum_{j=1}^D Z_t^{D,k-1}(j)
			f\bigl(Z_t^{D,k-1}(j),
			Z_t^{D,k}(i)\bigr)\dd t
		+\hht_D\bigl(Z_t^{D,k}(i)\bigr) \dd t  + \1_{k=0}h_D(0)\dd t\\
			&+\sqrt{\sigma^2\bigl(Z_t^{D,k}(i)\bigr)}\dd W_t^k(i),\quad
			t\in(0,\infty),\, (i,k)\in[D]\times\N_0,
	\end{split}
\end{equation}
where $\{W^k(i) \colon (i,k)\in\N\times\N_0\}$ is a set of
independent standard Brownian motions.
Existence and uniqueness of strong solutions of~\eqref{eq:ZD_k}
follow from Theorem~3.2 in~\textcite{ShigaShimizu1980}. These
processes will be referred to as \emph{loop-free processes}.

\begin{setting}[Coupling of migration level and loop-free processes]
	\label{set:migration_levels}
	Assume that Setting~\ref{set:coefficients} holds. For every $D\in\N$ let
	$\{(X_t^{D,k}(i), W_t^{k}(i))_{t\in[0,\infty)} \colon
	(i,k)\in[D]\times\N_0\}$ be a weak solution of~\eqref{eq:XD_k}
	with initial distribution and
	values in
	$\{(x_{i,k})_{(i,k)\in[D]\times\N_0} \in
	[0,1]^{[D]\times\N_0} \colon \sum_{k\in\N_0}x_{i,k} \in [0,1]
	\text{ for all } i\in[D]\}$.
	For every $D\in\N$ and $x \in [0,1]^{[D]\times \N_0}$ we denote by
	$\{ (Z_t^{D,k,x}(i))_{t\in[0,\infty)} \colon (i,k)\in[D]\times\N_0 \}$
	continuous adapted processes that are defined on
	the stochastic basis given by the weak solution of~\eqref{eq:XD_k},
	satisfy~\eqref{eq:ZD_k} with Brownian motion given by the Brownian motion
	of the weak solution of~\eqref{eq:XD_k}, and further satisfy for all
	$(i,k)\in[D]\times\N_0$ that a.s.~$Z_0^{D,k,x}(i) =
	x_{i,k}$.  Whenever we omit the index $x$, we consider the solution
	of~\eqref{eq:ZD_k} satisfying for all $(i,k)\in[D]\times\N_0$
	that a.s.~$Z_0^{D,k}(i) = X_0^{D,k}(i)$. For notational simplicity,
	we do not distinguish notationally between the possibly different stochastic bases and
	Brownian motions for different $D\in\N$.
\end{setting}

\subsection{Moment and regularity estimates} \label{ss:preliminary_results}
In this subsection, we collect some preparatory results. We start with the
following lemma which provides an estimate for the first moment of the total
mass process.
\begin{lemma}[First moment]
	\label{l:X_Z_first_moment}
	Assume that \textup{Setting~\ref{set:migration_levels}} holds and let
	$T\in[0,\infty)$.
	Then we have for all $D\in\N$ that
	\begin{equation}
		\sup_{t\in[0,T]} \Exp*{\sum_{i=1}^D \sum_{k\in\N_0} X_t^{D,k}(i)} \leq
		e^{(L_f+L_h)T}\left( \Exp*{\sum_{i=1}^D\sum_{k\in\N_0} X_0^{D,k}(i)} + 2\mu T\right)
		\label{eq:X_Z_first_moment}
	\end{equation}
	and~\eqref{eq:X_Z_first_moment} holds with $X_t^{D,k}(i)$ replaced by $Z_t^{D,k}(i)$.
\end{lemma}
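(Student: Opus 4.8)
The plan is to derive a closed differential inequality for the expected total mass $m_D(t) \defeq \Exp{\sum_{i=1}^D \sum_{k\in\N_0} X_t^{D,k}(i)}$ and then apply Gronwall's inequality. First I would fix $D\in\N$, sum the integrated form of~\eqref{eq:XD_k} over $i\in[D]$ and $k\in\N_0$, and take expectations. The Brownian integrals are (local) martingales; since the integrands are bounded (the state space is contained in $[0,1]^{[D]\times\N_0}$ with $\sum_k x_{i,k}\le 1$ and $\sigma^2$ is continuous on the compact $[0,1]$), a standard localization argument makes their expectations vanish. What remains is the drift part. The migration term, after summing over $k$, telescopes into $\frac{1}{D}\sum_{i,j}\bigl(\sum_{m}X_t^{D,m}(j)\bigr)f\bigl(\sum_m X_t^{D,m}(j),\sum_m X_t^{D,m}(i)\bigr)$, which is bounded in absolute value by $\frac{1}{D}\sum_{i,j}L_f \sum_m X_t^{D,m}(j) = L_f \sum_j \sum_m X_t^{D,m}(j)$ using $\abs{f}\le L_f$. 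The term $\frac{X_t^{D,k}(i)}{\sum_m X_t^{D,m}(i)}\hht_D(\sum_m X_t^{D,m}(i))$ summed over $k$ gives $\hht_D(\sum_m X_t^{D,m}(i))$ on the set where the denominator is positive (and contributes nothing otherwise, with the convention $0/0=0$); since $\hht_D(0)=0$ and $\hht_D$ is $L_h$-Lipschitz, this is bounded in absolute value by $L_h \sum_m X_t^{D,m}(i)$. Finally the immigration term $\1_{k=0}h_D(0)$ summed over $k$ and $i$ gives $D h_D(0)$, which is at most $2\mu$ by the assumption $2\mu \ge D h_D(0)\ge 0$ in Setting~\ref{set:coefficients}.

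Combining these three bounds yields, for a.e.\ $t$, the inequality
\begin{equation*}
  \frac{d}{dt}\, m_D(t) \le (L_f + L_h)\, m_D(t) + 2\mu,
\end{equation*}
or more precisely the integrated version $m_D(t) \le m_D(0) + (L_f+L_h)\int_0^t m_D(s)\dd s + 2\mu t$, which holds once one checks that $m_D(t)<\infty$ for all $t$ (this follows from the boundedness of the drift integrands: $m_D(t)\le m_D(0) + (L_f+L_h)D t + |D h_D(0)| t < \infty$, so Fubini/Tonelli applies and the differentiation under the integral is justified). Gronwall's inequality then gives $m_D(t) \le e^{(L_f+L_h)t}\bigl(m_D(0) + 2\mu t\bigr)$ for all $t\in[0,\infty)$, and taking the supremum over $t\in[0,T]$ and using monotonicity of $t\mapsto e^{(L_f+L_h)t}(m_D(0)+2\mu t)$ yields~\eqref{eq:X_Z_first_moment}.

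For the loop-free processes $Z^{D,k}(i)$, the argument is essentially identical after summing~\eqref{eq:ZD_k} over $i$ and $k$: the migration term is $\frac{1}{D}\sum_{i,j} Z_t^{D,k-1}(j) f(Z_t^{D,k-1}(j), Z_t^{D,k}(i))$ summed over $k$, which telescopes in $k$ (since $Z^{D,-1}=0$) and is bounded by $L_f \sum_j \sum_m Z_t^{D,m}(j)$; the drift term $\sum_k \hht_D(Z_t^{D,k}(i))$ — note here it is \emph{not} weighted, so one must use that $Z_t^{D,k}(i)$ takes values in $[0,1]$ with $\sum_k Z_t^{D,k}(i)\in[0,1]$, hence only finitely many summands matter and each is bounded by $L_h Z_t^{D,k}(i)$ giving $L_h\sum_m Z_t^{D,m}(i)$ — and the immigration term again contributes $D h_D(0)\le 2\mu$. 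The same Gronwall estimate follows verbatim. The main technical point to be careful about is the justification of the martingale-vanishing and the finiteness of $m_D(t)$ (equivalently, the interchange of expectation and infinite sum over $k$), both of which are handled by the a priori boundedness of the state space and the boundedness of the coefficient functions on the compact interval $[0,1]$; there is no real analytic difficulty here, only bookkeeping.
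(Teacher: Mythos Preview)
Your proposal is correct and follows essentially the same route as the paper: sum the SDE over $i$ and $k$, bound the drift by $(L_f+L_h)$ times the total mass plus $2\mu$, observe that the stochastic integrals are true martingales, and apply Gronwall. The paper streamlines the $X$-case by invoking Lemma~\ref{l:decomposition} so as to work directly with $\Xt^D$ and the aggregated SDE~\eqref{eq:XD} rather than re-deriving the summed dynamics from~\eqref{eq:XD_k}, but the content is identical.

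One minor slip: in the $Z$-case you assert that $\sum_{k} Z_t^{D,k}(i)\in[0,1]$, but unlike the $X^{D,k}$ the loop-free levels are not constrained to have their $k$-sum bounded by $1$ (each $Z^{D,k}(i)$ individually lies in $[0,1]$, but the sum need not). Fortunately you do not actually need this: since $\hht_D(0)=0$ and $\hht_D$ is $L_h$-Lipschitz, the termwise bound $\hht_D(Z_t^{D,k}(i))\le L_h Z_t^{D,k}(i)$ already yields $\sum_k \hht_D(Z_t^{D,k}(i))\le L_h\sum_k Z_t^{D,k}(i)$. The cleanest way to make the infinite sums over $k$ rigorous (both here and for the martingale part) is to first sum over $k\in[K]_0$, apply Gronwall to the truncated quantity, and then let $K\to\infty$ via monotone convergence; this avoids any appeal to the unjustified constraint.
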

\begin{proof}
	For all $D\in\N$ let $\{(W_t(i))_{t\in[0,\infty)} \colon
	i\in[D]\}$ and $\{(\Xt_t^D(i))_{t\in[0,\infty)} \colon
	i\in[D]\}$ be as in Lemma~\ref{l:decomposition}.
	Setting~\ref{set:coefficients} implies for all $D\in\N$ and all $x,y\in
	[0,1]$
	that $f(y,x) \leq L_f$ and $h_D(x) \leq L_h x + 2\mu/D$. Together
	with Lemma~\ref{l:decomposition}, this shows for all
	$D\in\N$ and all $t\in[0,\infty)$ that a.s.
	\begin{equation}
		\begin{split}
			\sum_{i=1}^D \Xt_t^D(i) &= \sum_{i=1}^D \Xt_0^D(i)
			+ \sum_{i=1}^D\int_0^t \frac 1D \sum_{j=1}^D
			\Xt_s^{D}(j)f\bigl( \Xt_s^{D}(j), \Xt_s^{D}(i)\bigr) + h_D\bigl(
			\Xt_s^{D}(i) \bigr)\dd s\\
			&\quad+\sum_{i=1}^D\int_0^t\sqrt{\sigma^2\bigl(\Xt_s^{D}(i)\bigr)}\dd
			W_s(i)\\
			&\leq \sum_{i=1}^D \Xt_0^D(i) + 2\mu t
			+ (L_f+L_h)\int_0^t \sum_{i=1}^D \Xt_s^{D}(i) \dd s
			+\sum_{i=1}^D\int_0^t\sqrt{\sigma^2\bigl(\Xt_s^{D}(i)\bigr)}\dd
			W_s(i).
		\end{split}
		\label{eq:estimate_XD}
	\end{equation}
	The stochastic integrals on the right-hand side of~\eqref{eq:estimate_XD}
	are martingales since the integrands are globally bounded.
	Hence,~\eqref{eq:estimate_XD} and Tonelli's theorem imply for all
	$D\in\N$ and all $t\in[0,\infty)$ that
	\begin{equation}
		\Exp*{\sum_{i=1}^D \Xt_t^{D}(i)} \leq \Exp*{\sum_{i=1}^D \Xt_0^D(i)} + 2\mu t
		+ (L_f+L_h)\int_0^t \Exp*{\sum_{i=1}^D \Xt_s^{D}(i)} \dd s.
	\end{equation}
	Gronwall's inequality then yields for all
	$D\in\N$ and all $t\in[0,\infty)$ that
	\begin{equation}
		\Exp*{\sum_{i=1}^D
		\Xt_t^{D}(i)} \leq e^{(L_f+L_h)t}\left(\Exp*{\sum_{i=1}^D \Xt_0^D(i)} + 2\mu
		t\right).
	\end{equation}
	Taking the supremum over $t\in [0,T]$ and using~\eqref{eq:all_levels}
	proves~\eqref{eq:X_Z_first_moment}.  The proof for the loop-free processes
	is similar. This completes the proof of Lemma~\ref{l:X_Z_first_moment}.
\end{proof}
The following lemma is a variant of Lemma~\ref{l:X_Z_first_moment} and,
heuristically speaking, shows that uniformly in the number of demes
essentially only finitely many migration levels contribute to the total mass.
\begin{lemma}[Essentially only finitely many levels]
	\label{l:essentially_finitely_many_generations}
	Assume that \textup{Setting~\ref{set:migration_levels}} holds and that
	\begin{equation}
		\sum_{k\in\N_0} \sup_{D\in\N} \Exp*{\sum_{i=1}^D X_0^{D,k}(i)} < \infty.
		\label{eq:essentially_finitely_many_generations_ass}
	\end{equation}
	Then we have for all $T\in[0,\infty)$ that
	\begin{equation} \label{eq:essentially_finitely_many_generations}
		\sum_{k\in\N_0}\sup_{D\in\N}\sup_{t\in[0,T]}
		\Exp*{\sum_{i=1}^D X_t^{D,k}(i)}
		\leq e^{(L_f+L_h)T}\left(\sum_{k\in\N_0}\sup_{D\in\N}\Exp*{\sum_{i=1}^D
		X^{D,k}_0(i)}+2\mu T\right)
	\end{equation}
	and~\eqref{eq:essentially_finitely_many_generations} holds with $X_t^{D,k}(i)$
	replaced by $Z_t^{D,k}(i)$.
\end{lemma}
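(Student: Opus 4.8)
The plan is to refine the proof of Lemma~\ref{l:X_Z_first_moment} by tracking each migration level separately and, crucially, by pulling the suprema over $D$ and over $t$ inside the sum over the levels. For $D\in\N$ and $k\in\N_0$ write $m_t^{D,k}\defeq\Exp*{\sum_{i=1}^D X_t^{D,k}(i)}$. First I would fix $D$ and $k$, sum~\eqref{eq:XD_k} over $i\in[D]$, take expectations, and use Tonelli. The stochastic integrals drop out because their integrands are bounded (by $\sqrt{\norm{\sigma^2}_\infty}<\infty$), and Setting~\ref{set:coefficients} supplies the drift bounds $\abs{f}\le L_f$ and $0\le Dh_D(0)\le 2\mu$ as well as, using $\hht_D(0)=0$ together with the Lipschitz bound for $h_D$, the estimate $\abs{\hht_D(x)}\le L_h x$ for $x\in[0,1]$; the latter gives $\tfrac{X_t^{D,k}(i)}{\sum_{m}X_t^{D,m}(i)}\hht_D(\sum_{m}X_t^{D,m}(i))\le L_h X_t^{D,k}(i)$, the left-hand side being read as $0$ when the total mass on deme $i$ vanishes. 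Since $X^{D,-1}\equiv0$, this yields for all $t\in[0,\infty)$ that
\[
	m_t^{D,k}\le m_0^{D,k}+\1_{k=0}\,2\mu t+L_f\int_0^t m_s^{D,k-1}\dd s+L_h\int_0^t m_s^{D,k}\dd s .
\]

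Next I would pass to running suprema. Since the right-hand side above is nondecreasing in $t$, the inequality persists with $m_t^{D,k}$ replaced by $p_t^{D,k}\defeq\sup_{s\in[0,t]}m_s^{D,k}$ on the left and with $m_s^{D,k-1},m_s^{D,k}$ replaced by $p_s^{D,k-1},p_s^{D,k}$ inside the integrals. Put $a_k\defeq\sup_{D\in\N}m_0^{D,k}$ and $A\defeq\sum_{k\in\N_0}a_k<\infty$, the finiteness being exactly the hypothesis~\eqref{eq:essentially_finitely_many_generations_ass}. By Lemma~\ref{l:X_Z_first_moment} and~\eqref{eq:all_levels} we have $p_t^{D,k}\le\sup_{s\in[0,t]}\Exp*{\sum_{i=1}^D\sum_{m\in\N_0}X_s^{D,m}(i)}\le e^{(L_f+L_h)t}(A+2\mu t)<\infty$ uniformly in $D$ and $k$; hence $q_t^k\defeq\sup_{D\in\N}p_t^{D,k}$ is finite, and taking $\sup_{D\in\N}$ in the inequality yields, with $q^{-1}\equiv0$,
\[
	q_t^k\le a_k+\1_{k=0}\,2\mu t+L_f\int_0^t q_s^{k-1}\dd s+L_h\int_0^t q_s^k\dd s .
\]

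Finally I would sum over the levels and invoke Gronwall. For $K\in\N_0$ set $Q_t^K\defeq\sum_{k=0}^K q_t^k<\infty$. Summing the last display over $k\in[K]_0$ and using $\sum_{k=0}^K q_s^{k-1}=\sum_{k=0}^{K-1}q_s^k\le Q_s^K$ (as $q^{-1}\equiv0$) gives $Q_t^K\le A+2\mu t+(L_f+L_h)\int_0^t Q_s^K\dd s$, so Gronwall's inequality yields $Q_t^K\le e^{(L_f+L_h)t}(A+2\mu t)$. Letting $K\to\infty$ by monotone convergence and taking $t=T$ gives $\sum_{k\in\N_0}q_T^k\le e^{(L_f+L_h)T}(A+2\mu T)$, which is~\eqref{eq:essentially_finitely_many_generations} since $q_T^k=\sup_{D\in\N}\sup_{t\in[0,T]}m_t^{D,k}$. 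The loop-free statement follows by the identical computation, as~\eqref{eq:ZD_k} has the same source term $\1_{k=0}h_D(0)$, its solutions take values in $[0,1]$ per level, and it obeys the same bounds ($\abs{f}\le L_f$, $\abs{\hht_D(x)}\le L_h x$, $\sigma^2$ bounded, $Z^{D,-1}\equiv0$).

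The step requiring care is the non-commutativity $\sum_k\sup\ge\sup\sum_k$: simply summing the left-hand side of~\eqref{eq:essentially_finitely_many_generations} over $k$ and then applying Lemma~\ref{l:X_Z_first_moment} would bound $\sup_{D,t}\sum_k$ rather than $\sum_k\sup_{D,t}$, and the term $L_h\int_0^t q_s^k\dd s$ precludes a per-level contraction in $k$. The running-supremum device in the second step is what circumvents this: because $p_t^{D,k}$ satisfies a Gronwall inequality of the same shape as $m_t^{D,k}$, the suprema over $t\in[0,T]$ and over $D$ are already taken \emph{before} the summation over $k$, and then the telescoping $\sum_k q^{k-1}\le\sum_k q^k$ closes the estimate with precisely the constant $e^{(L_f+L_h)T}$.
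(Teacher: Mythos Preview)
Your proof is correct and follows essentially the same approach as the paper: derive a per-level moment inequality from~\eqref{eq:XD_k} using the bounds $\abs{f}\le L_f$, $\hht_D(x)\le L_h x$, $Dh_D(0)\le 2\mu$, pass to running suprema in $t$ and then take $\sup_D$ (exactly as the paper does in~\eqref{eq:sum_over_k}), sum over $k\in[K]_0$, apply Gronwall, and let $K\to\infty$. Your exposition is a bit more explicit about the running-supremum device and the finiteness check via Lemma~\ref{l:X_Z_first_moment}, but the argument is the same.
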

\begin{proof}
	In the situation of Setting~\ref{set:coefficients}, it holds for all
	$D\in\N$ and all $x,y\in [0,1]$ that $f(y,x) \leq L_f$, that $\hht_D(x) \leq L_h
	x$, and that $Dh_D(0) \leq 2\mu$. Moreover, the stochastic integral part
	of~\eqref{eq:XD_k} yields a martingale. These facts,~\eqref{eq:XD_k}, and Tonelli's theorem
	show for all $k\in\N_0$, all $D\in\N$, and all $t\in[0,\infty)$ that
	\begin{equation}
		\begin{split}
			\Exp*{\sum_{i=1}^D X_{t}^{D,k}(i)}
			\leq \Exp*{\sum_{i=1}^D X_0^{D,k}(i)}+\1_{k=0}2\mu t
			+\int_0^t
			L_f \Exp*{\sum_{i=1}^D X_{s}^{D,k-1}(i)}
			+L_h \Exp*{\sum_{i=1}^D
			X_{s}^{D,k}(i)}\dd s.
		\end{split}
	\end{equation}
	This implies for all $T\in[0,\infty)$ and all $k\in\N_0$ that
	\begin{equation}
		\begin{split}
			\sup_{D\in\N}\sup_{t\in[0,T]}\Exp*{\sum_{i=1}^D X_{t}^{D,k}(i)}
			\leq {} &\sup_{D\in\N}\Exp*{\sum_{i=1}^D
			X^{D,k}_0(i)}+\1_{k=0}2\mu T \\
			&+ L_f  \int_0^T \sup_{D\in\N}\sup_{u\in[0,s]} \Exp*{\sum_{i=1}^D
			X_{u}^{D,k-1}(i)} \dd s\\
			&+L_h \int_0^T \sup_{D\in\N}\sup_{u\in[0,s]} \Exp*{\sum_{i=1}^D
			X_{u}^{D,k}(i)}\dd s.
		\end{split}
		\label{eq:sum_over_k}
	\end{equation}
	Lemma~\ref{l:X_Z_first_moment}
	and~\eqref{eq:essentially_finitely_many_generations_ass} show that the
	right-hand side of~\eqref{eq:sum_over_k} is finite. For every $K\in\N$ a
	summation of~\eqref{eq:sum_over_k} over $k\in[K]_0$ and Gronwall's
	inequality yield for all $T\in[0,\infty)$ that
	\begin{equation}
		\sum_{k=0}^K\sup_{D\in\N}\sup_{t\in[0,T]} \Exp*{\sum_{i=1}^D
		X_{t}^{D,k}(i)}
		\leq e^{(L_f+L_h)T}\left(\sum_{k=0}^K\sup_{D\in\N}\Exp*{\sum_{i=1}^D
		X^{D,k}_0(i)}+2\mu T\right).
	\end{equation}
	Taking the limit as $K\to\infty$
	proves~\eqref{eq:essentially_finitely_many_generations}.  The proof for the
	loop-free processes is similar. This completes the proof of
	Lemma~\ref{l:essentially_finitely_many_generations}
\end{proof}
The following lemma gives an estimate for the second moment of the total mass
process.
\begin{lemma}[Second moment]
	\label{l:X_Z_second_moment}
	Assume that \textup{Setting~\ref{set:migration_levels}} holds.
	Then we have for all $D\in\N$ and all $T\in[0,\infty)$ that
	\begin{equation}
		\begin{split}
			\MoveEqLeft\Exp*{\sup_{t\in[0,T]} \left(\sum_{i=1}^D \sum_{k\in\N_0}
			X_t^{D,k}(i) \right)^2}\\
			&\leq e^{(8L_\sigma +
			4(L_f+L_h)^2T)T}\left( 4\Exp*{\left(\sum_{i=1}^D\sum_{k\in\N_0}
			X_0^{D,k}(i)\right)^2} + 8T(L_\sigma+2\mu^2 T) \right)
		\end{split}
		\label{eq:X_Z_second_moment}
	\end{equation}
	and~\eqref{eq:X_Z_second_moment} holds with $X_t^{D,k}(i)$ replaced by $Z_t^{D,k}(i)$.
\end{lemma}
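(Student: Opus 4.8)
The plan is to reduce to a one-dimensional estimate for the total mass $M_t\defeq\smallsum_{i=1}^D\smallsum_{k\in\N_0}X_t^{D,k}(i)$ and to close a Gronwall loop. By Lemma~\ref{l:decomposition}, on the stochastic basis carrying the weak solution of~\eqref{eq:XD_k} there is a $D$-dimensional standard Brownian motion $W$ such that the process $\Xt^D$ with $\Xt_t^D(i)\defeq\smallsum_{k\in\N_0}X_t^{D,k}(i)$ solves~\eqref{eq:XD}; in particular $\Xt_t^D(i)\in[0,1]$, so $M_t=\smallsum_{i=1}^D\Xt_t^D(i)$ takes values in $[0,D]$. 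Summing~\eqref{eq:XD} over $i\in[D]$ and using (as in the proof of Lemma~\ref{l:X_Z_first_moment}) that $f\leq L_f$ and $h_D(x)=\hht_D(x)+h_D(0)\leq L_hx+2\mu/D$ on $[0,1]$, I obtain almost surely for all $t\in[0,\infty)$ that
\begin{equation*}
	0\leq M_t\leq M_0+2\mu t+(L_f+L_h)\int_0^t M_s\dd s+N_t,\qquad N_t\defeq\smallsum_{i=1}^D\int_0^t\sqrt{\sigma^2(\Xt_s^D(i))}\dd W_s(i).
\end{equation*}
The integrands defining $N$ are globally bounded, so $N$ is a continuous square-integrable martingale with $\langle N\rangle_t=\smallsum_{i=1}^D\int_0^t\sigma^2(\Xt_s^D(i))\dd s\leq L_\sigma\int_0^t M_s\dd s$, where I used $\sigma^2(x)=\sigma^2(x)-\sigma^2(0)\leq L_\sigma x$.

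Since the right-hand side of the displayed bound is nonnegative, the elementary inequality $(a_1+a_2+a_3+a_4)^2\leq4\smallsum_{j=1}^4a_j^2$ and $(\int_0^t M_s\dd s)^2\leq t\int_0^t M_s^2\dd s$ give $M_t^2\leq4M_0^2+16\mu^2t^2+4(L_f+L_h)^2t\int_0^t M_s^2\dd s+4N_t^2$ for all $t$. Writing $g(T)\defeq\Exp*{\sup_{t\in[0,T]}M_t^2}\leq D^2<\infty$, taking the supremum over $t\in[0,T]$ and then expectations, using Doob's $L^2$-inequality $\Exp*{\sup_{t\in[0,T]}N_t^2}\leq4\Exp*{\langle N\rangle_T}$ together with $\langle N\rangle_T\leq L_\sigma\int_0^T M_s\dd s\leq\tfrac{L_\sigma}{2}\int_0^T(1+M_s^2)\dd s$ (the last step by $M_s\leq\tfrac12(1+M_s^2)$), and $\Exp*{M_s^2}\leq g(s)$, I arrive at
\begin{equation*}
	g(T)\leq\bigl(4\Exp*{M_0^2}+16\mu^2T^2+8L_\sigma T\bigr)+\bigl(8L_\sigma+4(L_f+L_h)^2T\bigr)\int_0^T g(s)\dd s.
\end{equation*}
Both coefficient functions of $T$ on the right are nonnegative and nondecreasing, so Gronwall's inequality yields $g(T)\leq(4\Exp*{M_0^2}+16\mu^2T^2+8L_\sigma T)\,e^{(8L_\sigma+4(L_f+L_h)^2T)T}$; rewriting $16\mu^2T^2+8L_\sigma T=8T(L_\sigma+2\mu^2T)$ and noting $M_0=\smallsum_{i=1}^D\smallsum_{k\in\N_0}X_0^{D,k}(i)$ gives~\eqref{eq:X_Z_second_moment}.

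For the loop-free processes one runs the identical computation for $\widetilde M_t\defeq\smallsum_{i=1}^D\smallsum_{k\in\N_0}Z_t^{D,k}(i)$, obtained by summing~\eqref{eq:ZD_k} over $(i,k)\in[D]\times\N_0$; here $Z^{D,-1}=0$ makes the migration drift telescope to $\leq L_f\widetilde M_t$, and $\hht_D(0)=0$, $Dh_D(0)\leq2\mu$ handle the remaining drift terms exactly as above. The only additional point is that $\smallsum_{k\in\N_0}Z_t^{D,k}(i)$ need not be bounded by $1$, so the a priori square-integrability required to start the Gronwall argument is not immediate; this is remedied by first estimating the process stopped at the first time its total mass exceeds a level $n\in\N$ (which is then bounded by $n$) and letting $n\to\infty$ via the analog of Lemma~\ref{l:X_Z_first_moment} and monotone convergence. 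The only genuinely delicate part is the constant bookkeeping: one must keep $M_t\geq0$ so that the one-sided bound may legitimately be squared, and combine Doob's factor $4$, the factor $4$ from $(a_1+\dots+a_4)^2\leq4\smallsum_{j=1}^4 a_j^2$, and the bound $M_s\leq\tfrac12(1+M_s^2)$ so that $\sigma^2$ enters as $8L_\sigma$ --- not $16L_\sigma$ --- in both the additive term $8T(L_\sigma+2\mu^2T)$ and the exponent.
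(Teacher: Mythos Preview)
Your proof is correct and follows essentially the same approach as the paper: reduce to the total mass via Lemma~\ref{l:decomposition}, bound the drift by $(L_f+L_h)M_s+2\mu$, control the martingale part via Doob's $L^2$-inequality combined with $\sigma^2(x)\leq L_\sigma x$ and $2x\leq 1+x^2$, and close with Gronwall. The paper applies Minkowski's inequality to the $L^2$-norms before squaring via $(\sum_{j=1}^4 a_j)^2\leq 4\sum_{j=1}^4 a_j^2$, whereas you square the pathwise one-sided bound directly after observing that $M_t\geq 0$ forces the right-hand side to be nonnegative; both routes produce the identical pre-Gronwall inequality with the same constants. Your remark that the loop-free total mass $\smallsum_{k}Z_t^{D,k}(i)$ is not a priori bounded, and that one should localize before invoking Gronwall, is a point the paper passes over with ``the proof for the loop-free processes is similar''; your treatment there is in fact slightly more careful.
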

\begin{proof}
	For all $D\in\N$ let
	$\{(W_t(i))_{t\in[0,\infty)} \colon i\in[D]\}$ and
	$\{(\Xt^D_t(i))_{t\in[0,\infty)} \colon i\in[D]\}$ be as in
	Lemma~\ref{l:decomposition}.
	Setting~\ref{set:coefficients} implies for all $D\in\N$
	and all $x,y \in [0,1]$ that $\abs{f(y,x)}\leq L_f$ and $\abs{h_D(x)}\leq L_h x
	+ 2\mu/D$. 
	This and Lemma~\ref{l:decomposition} imply for all $D\in\N$ and all
	$t\in [0,\infty)$ that a.s.
	\begin{equation}
		\abs*{\sum_{i=1}^D \Xt_t^D(i)} \leq \abs*{\sum_{i=1}^D \Xt_0^D(i)}
		+ (L_f+L_h)\int_0^t \abs*{\sum_{i=1}^D \Xt_s^{D}(i)} \dd s + 2\mu t
		+\abs*{\sum_{i=1}^D\int_0^t\sqrt{\sigma^2\bigl(\Xt_s^{D}(i)\bigr)}\dd
		W_s(i)}.
	\end{equation}
	The Minkowski inequality then implies for all $D\in\N$ and all $T\in[0,\infty)$
	that
	\begin{equation}
		\begin{split}
			\Exp*{\sup_{t\in[0,T]} \left(\sum_{i=1}^D
			\Xt_t^{D}(i)\right)^2}^{\frac{1}{2}}
			&\leq \Exp*{\left(\sum_{i=1}^D \Xt_0^D(i)\right)^2}^{\frac{1}{2}} + (L_f+L_h)\int_0^T
			\Exp*{\left(\sum_{i=1}^D \Xt_s^{D}(i)\right)^2}^{\frac 12}\dd s\\
			&\quad + 2\mu T + \Exp*{\sup_{t\in[0,T]}\abs*{\sum_{i=1}^D \int_0^t
			\sqrt{\sigma^2\bigl(\Xt_s^{D}(i)\bigr)}\dd W_s(i)}^2}^{\frac 12}.
		\end{split}
		\label{eq:second_moment_firststep}
	\end{equation}
	Using Doob's $L^2$-inequality (e.g.\ Corollary~2.2.17 in~\textcite{EthierKurtz1986}), the It{\^o} isometry,
	Setting~\ref{set:coefficients}, and the fact that for all
	$x\in\R$ it holds that $2x\leq
	1+x^2$, we obtain for all $D\in\N$ and all $T\in[0,\infty)$ that
	\begin{equation}
		\begin{split}
			\Exp*{\sup_{t\in[0,T]}\abs*{\sum_{i=1}^D \int_0^t
			\sqrt{\sigma^2\bigl(\Xt_s^{D}(i)\bigr)}\dd W_s(i)}^2}
			&\leq 4\Exp*{\sum_{i=1}^D \int_0^T
			\sigma^2\bigl(\Xt_s^{D}(i)\bigr)\dd s}\\
			&\leq 4\int_0^T
			L_\sigma\Exp*{\sum_{i=1}^D \Xt_s^{D}(i)}\dd s\\
			&\leq 2L_\sigma T + 2L_\sigma\int_0^T\Exp*{
			\left(\sum_{i=1}^D \Xt_s^{D}(i)\right)^2}\dd s.
		\end{split}
		\label{eq:second_moment_doob}
	\end{equation}
	Equations~\eqref{eq:second_moment_firststep}
	and~\eqref{eq:second_moment_doob}, the fact that it holds for all
	$x_1,\dotsc,x_4 \in \R$ that $(\sum_{i=1}^4
	x_i)^2 \leq 4 \sum_{i=1}^4 x_i^2$, and Hölder's inequality
	yield for all $D\in\N$ and all
	$T\in[0,\infty)$ that
	\begin{equation}
		\begin{split}
			\Exp*{\sup_{t\in[0,T]} \left(\sum_{i=1}^D \Xt_t^{D}(i)\right)^2}
			&\leq 4\Exp*{\left(\sum_{i=1}^D \Xt_0^D(i)\right)^2} + 16\mu^2 T^2 \\
			&\quad +
			4(L_f+L_h)^2\left(\int_0^T
			\Exp*{\left(\sum_{i=1}^D \Xt_s^{D}(i)\right)^2}^{\frac 12}\dd s\right)^2\\
			&\quad + 8L_\sigma T
			+ 8L_\sigma\int_0^T\Exp*{
			\left(\sum_{i=1}^D \Xt_s^{D}(i)\right)^2}\dd s\\
			&\leq 4\Exp*{\left(\sum_{i=1}^D \Xt_0^D(i)\right)^2} + 8T(L_\sigma+2\mu^2
			T)\\
			&\quad+ \bigl(8L_\sigma+ 4(L_f+L_h)^2T\bigr) \int_0^T\Exp*{
			\sup_{u\in[0,s]}\left(\sum_{i=1}^D \Xt_u^{D}(i)\right)^2}\dd s.
		\end{split}
	\end{equation}
	Gronwall's inequality then yields
	for all $D\in\N$ and all $T\in[0,\infty)$ that
	\begin{equation}
		\Exp*{\sup_{t\in[0,T]} \left(\sum_{i=1}^D \Xt_t^{D}(i) \right)^2} \leq
		e^{(8L_\sigma + 4(L_f+L_h)^2 T)T}\left( 4\Exp*{\left(\sum_{i=1}^D
		\Xt_0^{D}(i)\right)^2} + 8T(L_\sigma+2\mu^2 T) \right).
	\end{equation}
	Together with~\eqref{eq:all_levels}, this proves~\eqref{eq:X_Z_second_moment}.
	The proof for the loop-free processes is similar. This completes the proof
	of Lemma~\ref{l:X_Z_second_moment}.
\end{proof}
The following lemma is a consequence of Lemma~\ref{l:X_Z_second_moment} and
allows for a localization procedure in the total mass uniformly in the number
of demes.
\begin{lemma}[Localization argument]
	\label{l:tau_theta}
	Assume that \textup{Setting~\ref{set:migration_levels}} and
	\begin{equation}
		\sup_{D\in\N}\Exp*{\left(\sum_{i=1}^D\sum_{k\in\N_0}
		X_0^{D,k}(i)\right)^2} < \infty
		\label{eq:tau_theta_ass}
	\end{equation}
	hold and for every $D,M\in\N$ define the stopping time
	\begin{equation}
		\tau_M^D \defeq \inf\left\{ t\in[0,\infty) \colon
		\sum_{i=1}^D\sum_{m\in\N_0} X_t^{D,m}(i) \geq
		M \right\}.
		\label{eq:tau_MD}
	\end{equation}
	Then it holds for all $T\in[0,\infty)$ that
	\begin{equation}
		\lim_{M\to\infty}\sup_{D\in\N}
		\Exp*{\sup_{t\in[0,T]}\sum_{i=1}^D\sum_{m\in\N_0}X_t^{D,m}(i)\1_{\{\tau_M^D\leq
		T\}}} = 0.
		\label{eq:tau_theta}
	\end{equation}
\end{lemma}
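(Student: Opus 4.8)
The plan is to derive the claim as a short consequence of the uniform second-moment bound in Lemma~\ref{l:X_Z_second_moment}, combined with Chebyshev's and the Cauchy--Schwarz inequality. I would write $S^D_t\defeq\sum_{i=1}^D\sum_{m\in\N_0}X^{D,m}_t(i)$ for the total mass process. Since the migration level processes have continuous sample paths, $t\mapsto S^D_t$ is continuous, so $\{t\in[0,\infty)\colon S^D_t\ge M\}$ is closed and the infimum in~\eqref{eq:tau_MD} is attained whenever it is finite; hence on the event $\{\tau^D_M\le T\}$ one has $S^D_{\tau^D_M}\ge M$ with $\tau^D_M\le T$, so that $\sup_{t\in[0,T]}S^D_t\ge M$. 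This gives the inclusion $\{\tau^D_M\le T\}\subseteq\{\sup_{t\in[0,T]}S^D_t\ge M\}$, which is the only structural fact I need.

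Next I would observe that Lemma~\ref{l:X_Z_second_moment} together with hypothesis~\eqref{eq:tau_theta_ass} shows that $C_T\defeq\sup_{D\in\N}\Exp*{\sup_{t\in[0,T]}(S^D_t)^2}<\infty$. Then, for every $D\in\N$, I would estimate
\begin{equation*}
	\Exp*{\sup_{t\in[0,T]}S^D_t\,\1_{\{\tau^D_M\le T\}}}
	\le C_T^{1/2}\,\Prob\bigl(\tau^D_M\le T\bigr)^{1/2}
	\le C_T^{1/2}\bigl(C_T M^{-2}\bigr)^{1/2}
	= \frac{C_T}{M},
\end{equation*}
where the first inequality is the Cauchy--Schwarz inequality applied to the product of $\sup_{t\in[0,T]}S^D_t$ and $\1_{\{\tau^D_M\le T\}}$ together with the definition of $C_T$, and the second inequality combines the inclusion from the previous paragraph with Chebyshev's inequality, which gives $\Prob(\tau^D_M\le T)\le\Prob(\sup_{t\in[0,T]}S^D_t\ge M)\le M^{-2}\Exp*{\sup_{t\in[0,T]}(S^D_t)^2}\le M^{-2}C_T$. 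Taking the supremum over $D\in\N$ and then letting $M\to\infty$ yields~\eqref{eq:tau_theta}.

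Every ingredient here is elementary, so I do not expect a genuine obstacle. The only points worth a sentence of justification are the path-continuity argument identifying $\{\tau^D_M\le T\}$ with an up-crossing of the continuous process $S^D$ above level $M$ on $[0,T]$, and the remark that the constant $C_T$ is uniform in $D$ precisely because~\eqref{eq:tau_theta_ass} bounds the initial second moment uniformly in $D$.
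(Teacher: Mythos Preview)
Your proof is correct and essentially matches the paper's approach: both identify $\{\tau_M^D\le T\}$ with the event that $\sup_{t\in[0,T]}S^D_t\ge M$, invoke the uniform second-moment bound from Lemma~\ref{l:X_Z_second_moment}, and obtain the estimate $C_T/M$. The only cosmetic difference is that the paper bypasses Cauchy--Schwarz plus Chebyshev by using the one-line pointwise inequality $\sup_t S^D_t\,\1_{\{\sup_t S^D_t\ge M\}}\le M^{-1}(\sup_t S^D_t)^2$, arriving at the same bound in a single step.
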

\begin{proof}
	For all $D,M\in\N$ and $T\in[0,\infty)$ we have that $\{\tau_M^D\leq T\}
	=\{\sup_{t\in[0,T]}\sum_{i=1}^D
	\sum_{m\in\N_0}X_t^{D,m}(i)\geq M\}$. This implies for all $D,M\in\N$ and
	all $T\in[0,\infty)$ that
	\begin{equation}
		\Exp*{\sup_{t\in[0,T]}\sum_{i=1}^D\sum_{m\in\N_0}X_t^{D,m}(i)\1_{\{\tau_M^D\leq
		T\}}}
		\leq
		\frac{1}{M}\Exp*{\sup_{t\in[0,T]}\left(\sum_{i=1}^D\sum_{m\in\N_0}X_t^{D,m}(i)\right)^2}.
	\end{equation}
	This,
	Lemma~\ref{l:X_Z_second_moment}, and~\eqref{eq:tau_theta_ass}
	show~\eqref{eq:tau_theta}. This completes the proof of
	Lemma~\ref{l:tau_theta}.
\end{proof}
Throughout the rest of this subsection and in
Subsection~\ref{ss:poisson_limit} below, the following Setting~\ref{set:g_D} will frequently
be referred to. In the situation of Setting~\ref{set:g_D}, for every $D\in\N$ the
SDE~\eqref{eq:SDE_YD} below with $g = g_D$ has a unique strong solution with continuous
sample paths in $[0,1]$; see, e.g., Theorem~5.4.22, Proposition~5.2.13, and
Corollary~5.3.23 in \textcite{KaratzasShreve1991}. This fact will be used
tacitly in the remainder of this paper.
\begin{setting}[Time-dependent immigration]\label{set:g_D}
	Assume that Setting~\ref{set:coefficients} holds and that $g_D\colon
	[0,\infty)\times [0,1] \to \R$, $D\in\N$, are measurable functions that satisfy for all
	$D\in\N$ and all $t\in[0,\infty)$ that $g_D(t,0)\geq 0$, that
	$\frac{1}{D}g_D(t,1) + \hht_D(1) \leq 0$, that
	\begin{equation}
		\sup_{u\in[0,\infty)}\sup_{\substack{x,y\in [0,1] \\ x\neq y}}\frac{\abs{g_D(u,x) -
		g_D(u,y)}}{\abs{x-y}} < \infty,
		\label{eq:g_D_Lipschitz}
	\end{equation}
	and that
	\begin{equation}
		\sup_{M\in\N}\int_0^t \sup_{x\in [0,1]}\abs{g_M(u,x)}^2\dd u <
		\infty.\qedhere
		\label{eq:g_D_squareint}
	\end{equation}
\end{setting}
For all $s\in[0,\infty)$, all $D\in\N$, and all measurable functions $g\colon [0,\infty) \times
[0,1] \to \R$ we consider the one-dimensional SDE
\begin{equation}
	dY_{t,s}^{D,g} = \tfrac{1}{D}g\bigl(t,Y_{t,s}^{D,g}\bigr) \dd t
	+\hht_D\bigl(Y_{t,s}^{D,g}\bigr)\dd t
	+\sqrt{\sigma^2\bigl(Y_{t,s}^{D,g}\bigr)}\dd W_t,\quad t \in [s,\infty),
	\label{eq:SDE_YD}
\end{equation}
where $W$ is a standard Brownian motion. We adopt the same notation and write,
for example, $Y_{t,s}^{D,\zeta}$ (or $Y_{t,s}^{D,c}$) when the function
$g\colon [0,\infty)\times[0,1]\to \R$ in~\eqref{eq:SDE_YD} is replaced by a
function $\zeta\colon [0,\infty) \to \R$ (or by a constant $c \in \R$).

The following lemma estimates the $L^1$-distance between certain solutions
of~\eqref{eq:SDE_YD}.
\begin{lemma}[$L^1$-regularity]
	\label{l:YD_L1_distance}
	Assume that \textup{Setting~\ref{set:coefficients}} holds, let
	$g_D\colon[0,\infty)\times [0,1]\to \R$, $D\in\N$, and
	$\gt_D\colon[0,\infty)\times [0,1]\to \R$, $D\in\N$, be two sequences of
	functions satisfying \textup{Setting~\ref{set:g_D}}, let $s\in[0,\infty)$,
	and for every $D\in\N$ let $(Y^{D,g_D}_{t,s})_{t\in[s,\infty)}$ and
	$(Y^{D,\gt_D}_{t,s})_{t\in[s,\infty)}$ be solutions of~\eqref{eq:SDE_YD}
	with respect to the same Brownian motion.
	Then it holds for all $D\in\N$ and all $t\in[s,\infty)$ that
	\begin{equation*}
		\begin{split}
			\Exp[\big]{\abs[\big]{Y_{t,s}^{D,g_D} - Y_{t,s}^{D,\gt_D}}}
			\leq e^{L_h(t-s)}\biggl(\Exp[\big]{\abs[\big]{Y^{D,g_D}_{s,s} -
			Y^{D,\gt_D}_{s,s}}} 
			+ \tfrac{1}{D}
			\int_s^t
			\Exp[\big]{\abs[\big]{g_D\bigl(u,Y^{D,g_D}_{u,s}\bigr) -
			\gt_D(u,Y^{D,\gt_D}_{u,s})}}\dd u\biggr).
		\end{split}
	\end{equation*}
\end{lemma}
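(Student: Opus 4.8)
The plan is to combine It\^o's formula with the classical Yamada--Watanabe approximation of the absolute value (cf.\ \textcite{KaratzasShreve1991}); this is the natural tool here because the only non-Lipschitz ingredient of~\eqref{eq:SDE_YD} is the diffusion coefficient $x\mapsto\sqrt{\sigma^2(x)}$, which under Setting~\ref{set:coefficients} is merely H\"older-$\tfrac12$-continuous, while $\hht_D$ is Lipschitz and the $\tfrac1D g$-terms can be kept as an inhomogeneity. Fix $D\in\N$ and abbreviate $\Delta_t\defeq Y^{D,g_D}_{t,s}-Y^{D,\gt_D}_{t,s}$ for $t\in[s,\infty)$; since both solutions take values in $[0,1]$ (by the discussion preceding the lemma), $\Delta$ is a $[-1,1]$-valued continuous semimartingale driven by the common Brownian motion. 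Recall the Yamada--Watanabe functions: one chooses reals $1=a_0>a_1>a_2>\dotsb$ with $a_n\downarrow 0$ and $\int_{a_n}^{a_{n-1}}\tfrac{\dd u}{u}=n$ and, for each $n\in\N$, a twice continuously differentiable $\phi_n\colon\R\to[0,\infty)$ with $\phi_n(0)=0$, $\abs{\phi_n'(x)}\le 1$, $\abs{x}-a_{n-1}\le\phi_n(x)\le\abs{x}$, and $0\le\phi_n''(x)\le\tfrac{2}{n\abs{x}}\1_{(a_n,a_{n-1})}(\abs{x})$ for all $x\in\R$.

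Next I would apply It\^o's formula to $\phi_n(\Delta_t)$ for fixed $n\in\N$. The stochastic integral term has integrand $\phi_n'(\Delta_u)\bigl(\sqrt{\sigma^2(Y^{D,g_D}_{u,s})}-\sqrt{\sigma^2(Y^{D,\gt_D}_{u,s})}\bigr)$, which is bounded because $\abs{\phi_n'}\le 1$ and $\sigma^2$ is continuous on the compact interval $[0,1]$; hence this term is a genuine martingale with zero expectation. For the It\^o correction term I would use the elementary inequality $(\sqrt\alpha-\sqrt\beta)^2\le\abs{\alpha-\beta}$ for $\alpha,\beta\ge 0$, the Lipschitz bound $\abs{\sigma^2(x)-\sigma^2(y)}\le L_\sigma\abs{x-y}$, and the support-and-size bound on $\phi_n''$ to obtain $\phi_n''(\Delta_u)\bigl(\sqrt{\sigma^2(Y^{D,g_D}_{u,s})}-\sqrt{\sigma^2(Y^{D,\gt_D}_{u,s})}\bigr)^2\le\tfrac{2}{n\abs{\Delta_u}}L_\sigma\abs{\Delta_u}=\tfrac{2L_\sigma}{n}$ (the value $\Delta_u=0$ causing no problem since $\phi_n''$ vanishes there), so that halving and integrating over $[s,t]$ contributes at most $\tfrac{L_\sigma(t-s)}{n}$ in expectation. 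For the drift term I would bound $\abs{\phi_n'}\le 1$ on the $\tfrac1D g$-contribution and use $\abs{\hht_D(x)-\hht_D(y)}=\abs{h_D(x)-h_D(y)}\le L_h\abs{x-y}$ on the $\hht_D$-contribution, giving the pointwise estimate $\phi_n'(\Delta_u)\bigl[\tfrac1D\bigl(g_D(u,Y^{D,g_D}_{u,s})-\gt_D(u,Y^{D,\gt_D}_{u,s})\bigr)+\hht_D(Y^{D,g_D}_{u,s})-\hht_D(Y^{D,\gt_D}_{u,s})\bigr]\le\tfrac1D\abs{g_D(u,Y^{D,g_D}_{u,s})-\gt_D(u,Y^{D,\gt_D}_{u,s})}+L_h\abs{\Delta_u}$.

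Taking expectations in It\^o's formula, bounding $\Exp{\phi_n(\Delta_s)}\le\Exp{\abs{\Delta_s}}$, and using $\abs{\Delta_t}\le\phi_n(\Delta_t)+a_{n-1}$ on the left-hand side, I would arrive for every $n\in\N$ at
\begin{equation*}
	\Exp*{\abs{\Delta_t}}\le\Exp*{\abs{\Delta_s}}+\tfrac1D\int_s^t\Exp*{\abs[\big]{g_D(u,Y^{D,g_D}_{u,s})-\gt_D(u,Y^{D,\gt_D}_{u,s})}}\dd u+L_h\int_s^t\Exp*{\abs{\Delta_u}}\dd u+\tfrac{L_\sigma(t-s)}{n}+a_{n-1}.
\end{equation*}
Letting $n\to\infty$ removes the last two terms, and then Gronwall's inequality — applicable because $t\mapsto\Exp{\abs{\Delta_t}}$ is bounded by $1$ and hence finite, and because the forcing term $\Exp{\abs{\Delta_s}}+\tfrac1D\int_s^t\Exp{\abs{g_D(u,Y^{D,g_D}_{u,s})-\gt_D(u,Y^{D,\gt_D}_{u,s})}}\dd u$ is nondecreasing in $t$ — yields exactly the asserted bound with the factor $e^{L_h(t-s)}$.

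I expect the only \emph{genuinely} delicate point to be the treatment of the diffusion coefficient, i.e.\ making the Yamada--Watanabe cancellation precise and checking that the stochastic integral is a true martingale; the drift estimate and the concluding Gronwall step are routine, and finiteness of $u\mapsto\Exp{\abs{g_D(u,Y^{D,g_D}_{u,s})-\gt_D(u,Y^{D,\gt_D}_{u,s})}}$ as an integrand (needed only if one wants the right-hand side of the lemma to be finite) follows from~\eqref{eq:g_D_squareint} via the Cauchy--Schwarz inequality.
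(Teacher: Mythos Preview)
Your proposal is correct and follows essentially the same approach as the paper: both use the Yamada--Watanabe $C^2$-approximation of $\abs{\cdot}$ to kill the local-time contribution coming from the H\"older-$\tfrac12$ diffusion coefficient, bound the $\hht_D$-drift via the Lipschitz constant $L_h$, keep the $\tfrac1D g$-difference as an inhomogeneity, take expectations (the stochastic integral being a true martingale because its integrand is bounded on $[0,1]$), and conclude with Gronwall. The only cosmetic difference is that the paper compresses the approximation step into the identity~\eqref{eq:l1_yamada} by citing \textcite{YamadaWatanabe1971} and \textcite{HutzenthalerWakolbinger2007}, whereas you carry out the $\phi_n$-limit explicitly.
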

\begin{proof}
	As in Theorem~1 in \textcite{YamadaWatanabe1971} (see also, e.g., the proof
	of Lemma~3.3 in \textcite{HutzenthalerWakolbinger2007})
	an approximation of $\R\ni x\mapsto \abs{x}\in\R$ with $C^2$-functions and
	exploiting that $\sup_{x,y\in[0,1],x\neq
	y}\frac{\abs{\sqrt{\sigma^2(x)}-\sqrt{\sigma^2(y)}}^2}{\abs{x-y}}<\infty$
  shows for
	all $D\in\N$ and all $t\in[s,\infty)$ that a.s.
	\begin{equation}
		\abs[\big]{Y_{t,s}^{D,g_D} - Y_{t,s}^{D,\gt_D}} =
		\abs[\big]{Y^{D,g_D}_{s,s} - Y^{D,\gt_D}_{s,s}} +
		\int_s^t\sgn\bigl(Y_{u,s}^{D,g_D}-Y_{u,s}^{D,\gt_D}\bigr)\dd\bigl(Y_{u,s}^{D,g_D}
		- Y_{u,s}^{D,\gt_D}\bigr).
		\label{eq:l1_yamada}
	\end{equation}
	For every $D\in\N$ and every $t\in[s,\infty)$ let $M_t^{D}$ be a real-valued
	random variable satisfying a.s.~that
	\begin{equation}
		M^{D}_t = \int_s^t \sgn\bigl(Y_{u,s}^{D,g_D} -
		Y_{u,s}^{D,\gt_D}\bigr)
		\left(\sqrt{\sigma^2\bigl(Y^{D,g_D}_{u,s}\bigr)} -
		\sqrt{\sigma^2\bigl(Y^{D,\gt_D}_{u,s}\bigr)}\right)\dd W_u.
		\label{eq:l1_MDn}
	\end{equation}
	Then~\eqref{eq:l1_yamada} and Setting~\ref{set:coefficients}
	imply for all $D\in\N$
	and all $t\in[s,\infty)$ that a.s.
	\begin{equation}
		\begin{split}
			\abs[\big]{Y_{t,s}^{D,g_D} - Y_{t,s}^{D,\gt_D}}
			&\leq
			\abs[\big]{Y^{D,g_D}_{s,s} - Y^{D,\gt_D}_{s,s}} +
			\tfrac{1}{D}\int_s^t \abs[\big]{g_D\bigl(u,Y^{D,g_D}_{u,s}\bigr) -
			\gt_D\bigl(u,Y^{D,\gt_D}_{u,s}\bigr)}\dd u \\
			&\quad+ L_h\int_s^t
			\abs[\big]{Y^{D,g_D}_{u,s} -
			Y^{D,\gt_D}_{u,s}} \dd u  + M^{D}_t.
		\end{split}
		\label{eq:l1_yamada2}
	\end{equation}
	Since the integrand of the stochastic integral in~\eqref{eq:l1_MDn} is globally
	bounded, it holds for all $D\in\N$ and all
	$t\in[s,\infty)$ that $\Exp{M^{D}_t} = 0$.
	Therefore,~\eqref{eq:l1_yamada2} and Tonelli's theorem imply for
	all $D\in\N$ and all $t\in[s,\infty)$ that
	\begin{equation}
		\begin{split}
			\Exp[\big]{\abs[\big]{Y_{t,s}^{D,g_D} - Y_{t,s}^{D,\gt_D}}}
			\leq{} &
			\Exp[\big]{\abs[\big]{Y^{D,g_D}_{s,s} - Y^{D,\gt_D}_{s,s}}} +
			\tfrac{1}{D}\int_s^t
			\Exp[\big]{\abs[\big]{g_D\bigl(u,Y^{D,g_D}_{u,s}\bigr) -
			\gt_D(u,Y^{D,\gt_D}_{u,s})}}\dd u \\
			&+ L_h \int_s^t
			\Exp[\big]{\abs[\big]{Y_{u,s}^{D,g_D} -
			Y_{u,s}^{D,\gt_D}}}\dd u.
		\end{split}
		\label{eq:l1_pre_gronwall}
	\end{equation}
	Gronwall's inequality then finishes the proof of Lemma~\ref{l:YD_L1_distance}.
\end{proof}
The following lemma provides us with a second moment estimate for solutions
of~\eqref{eq:SDE_YD} and is similar to Lemma~\ref{l:X_Z_second_moment}. Its
proof is completely analogous to that of Lemma~\ref{l:X_Z_second_moment} and
thus omitted here.
\begin{lemma}[Second moment]
	\label{l:YD_second_moment}
	Assume that \textup{Setting~\ref{set:g_D}} holds, let
	$s\in[0,\infty)$, 
	and for every $D\in\N$ let $(Y^{D,g_D}_{t,s}(i))_{t\in[s,\infty)}$,
	$i\in[D]$, be independent solutions of~\eqref{eq:SDE_YD}
	satisfying for all $i\in[D]$ a.s.~that
	$Y^{D,g_D}_{s,s}(i) = 0$.
	Then it holds for all $D\in\N$ and all $T\in[s,\infty)$ that
	\begin{equation}
		\Exp*{\sup_{t\in[s,T]}\left(\sum_{i=1}^D
		Y_{t,s}^{D,g_D}(i)\right)^2}
		\leq 3(T-s)e^{(6L_\sigma+3L_h^2(T-s))(T-s)}\left(\int_s^T \sup_{x\in [0,1]}\abs{g_D(u,x)}^2 \dd
		u + 2L_\sigma\right).
		\label{eq:YD_sum_second_moment}
	\end{equation}
\end{lemma}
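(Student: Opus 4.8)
The plan is to follow the proof of Lemma~\ref{l:X_Z_second_moment}, now applied to the total-mass process $N^D_t \defeq \sum_{i=1}^D Y^{D,g_D}_{t,s}(i)$. Summing the integrated form of~\eqref{eq:SDE_YD} over $i\in[D]$ and using $Y^{D,g_D}_{s,s}(i)=0$ expresses $N^D_t$ as the sum of the $g_D$-drift term, the $\hht_D$-drift term, and the stochastic integral $M^D_t \defeq \sum_{i=1}^D\int_s^t\sqrt{\sigma^2(Y^{D,g_D}_{u,s}(i))}\dd W_u(i)$. The bounds from Setting~\ref{set:g_D} give $\abs{\frac1D\sum_{i=1}^D g_D(u,Y^{D,g_D}_{u,s}(i))} \le \sup_{x\in[0,1]}\abs{g_D(u,x)}$ (an average is dominated by the supremum), while $\hht_D(0)=0$ and the $L_h$-Lipschitz property of $\hht_D$ yield $\abs{\sum_{i=1}^D\hht_D(Y^{D,g_D}_{u,s}(i))} \le L_h N^D_u$. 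Since $N^D$ is nonnegative, this shows a.s.\ that $\sup_{t\in[s,T]} N^D_t \le \int_s^T\sup_{x\in[0,1]}\abs{g_D(u,x)}\dd u + L_h\int_s^T N^D_u\dd u + \sup_{t\in[s,T]}\abs{M^D_t}$.

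Squaring this inequality and using Minkowski's inequality (with its integral form for the middle term), $(\sum_{i=1}^3 x_i)^2 \le 3\sum_{i=1}^3 x_i^2$, and Hölder's inequality, I obtain
\[
\Exp*{\sup_{t\in[s,T]}(N^D_t)^2} \le 3(T-s)\int_s^T\sup_{x\in[0,1]}\abs{g_D(u,x)}^2\dd u + 3L_h^2(T-s)\int_s^T\Exp*{(N^D_u)^2}\dd u + 3\,\Exp*{\sup_{t\in[s,T]}\abs{M^D_t}^2}.
\]
Since the $W(i)$, $i\in[D]$, are independent, $M^D$ is a martingale (the integrands are bounded), so Doob's $L^2$-inequality and the It{\^o} isometry bound $\Exp*{\sup_{t\in[s,T]}\abs{M^D_t}^2}$ by $4\sum_{i=1}^D\int_s^T\Exp{\sigma^2(Y^{D,g_D}_{u,s}(i))}\dd u$; the bound $\sigma^2(x)\le L_\sigma x$ (from $\sigma^2(0)=0$ and the $L_\sigma$-Lipschitz property) followed by $2x\le 1+x^2$ turns this into $2L_\sigma(T-s)+2L_\sigma\int_s^T\Exp{(N^D_u)^2}\dd u$. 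Substituting and estimating $\Exp{(N^D_u)^2}\le\Exp{\sup_{v\in[s,u]}(N^D_v)^2}$ yields
\[
\Exp*{\sup_{t\in[s,T]}(N^D_t)^2} \le 3(T-s)\Bigl(\int_s^T\sup_{x\in[0,1]}\abs{g_D(u,x)}^2\dd u + 2L_\sigma\Bigr) + \bigl(6L_\sigma+3L_h^2(T-s)\bigr)\int_s^T\Exp*{\sup_{v\in[s,u]}(N^D_v)^2}\dd u.
\]

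Since $N^D_t\in[0,D]$, the left-hand side is a priori finite, so Gronwall's inequality applied to the nondecreasing map $u\mapsto\Exp{\sup_{v\in[s,u]}(N^D_v)^2}$ delivers exactly~\eqref{eq:YD_sum_second_moment}. There is no genuine obstacle here: this is the routine second-moment argument already used for Lemma~\ref{l:X_Z_second_moment}, with the one-dimensional $g_D$-drift in place of the migration term; the only care needed is to track the numerical constants so that they assemble into the prefactor $3(T-s)$, the additive term $2L_\sigma$, and the exponent $(6L_\sigma+3L_h^2(T-s))(T-s)$ as stated.
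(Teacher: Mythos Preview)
Your proof is correct and follows exactly the route the paper indicates: the authors omit the proof, stating it is ``completely analogous to that of Lemma~\ref{l:X_Z_second_moment}'', and your argument is precisely that analogy carried out with the three-term decomposition (no initial value, $g_D$-drift, $\hht_D$-drift, martingale) in place of the four-term one, which accounts for the constant $3$ rather than $4$.
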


The following lemma follows from a straightforward adaptation of Lemma~9.8
in~\textcite{Hutzenthaler2009} to the state space $[0,1]$.
\begin{lemma}[Finite excursion area] \label{l:finite_excursion_area}
	Assume that \textup{Setting~\ref{set:excursion_measure}} holds.
	Then it holds that
	\begin{equation}
		\int \int_0^\infty \eta_t\dd t \di Q(d\eta)
		=\int_0^{1}\frac{y}{\frac{1}{2}\sigma^2(y)s(y)} \dd y
		<\infty.
	\end{equation}
\end{lemma}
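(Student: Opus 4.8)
The plan is to read the asserted identity off the defining property~\eqref{eq:Q} of the excursion measure $Q$, exactly as in the adaptation of Lemma~9.8 in~\textcite{Hutzenthaler2009} to the state space $[0,1]$, and to obtain finiteness for free from~\eqref{eq:excursion_area}. The key observation is that~\eqref{eq:excursion_area} is precisely the present lemma with the identity map $[0,1]\ni y\mapsto y$ replaced by $\at(y)=yf(y,0)$, and that both are instances of the identity
\[
	\int\int_0^\infty\phi(\eta_t)\dd t\di Q(d\eta)=\int_0^1\frac{\phi(y)}{\frac12\sigma^2(y)s(y)}\dd y,
\]
valid for every continuous $\phi\colon[0,1]\to[0,\infty)$ with $\phi(0)=0$; one applies it with $\phi$ the identity. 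Finiteness of both sides is then immediate: by Setting~\ref{set:coefficients} the map $[0,1]\ni y\mapsto f(y,0)$ is continuous and strictly positive, so $c\defeq\min_{y\in[0,1]}f(y,0)\in(0,\infty)$ and $y\le c^{-1}\at(y)$ for all $y\in[0,1]$, whence~\eqref{eq:excursion_area} yields
\[
	\int\int_0^\infty\eta_t\dd t\di Q(d\eta)\le\tfrac{1}{c}\int\int_0^\infty\at(\eta_t)\dd t\di Q(d\eta)=\tfrac{1}{c}\int_0^1\frac{\at(y)}{\frac12\sigma^2(y)s(y)}\dd y<\infty,
\]
and the identity transfers this finiteness to the right-hand side of the lemma.

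For the displayed identity I would follow the standard route behind Lemma~9.8. Since $0$ is absorbing for $Y$ and no mass is lost at $1$ (this is what the $[0,1]$-adaptations of Lemma~9.5 and Lemma~9.6 in~\textcite{Hutzenthaler2009} invoked above deliver), the Green's function of $Y$ killed at $0$, with respect to the speed measure $m(dy)=\frac{2}{\sigma^2(y)s(y)}\dd y$, equals $(\eps,y)\mapsto S(\eps\wedge y)$ (recall $S(0)=0$). Hence for every $\eps\in(0,1)$ one has the occupation-time representation
\[
	\Exp[\big]{\textstyle\int_0^{T_0(Y)}\phi(Y_t)\dd t\given Y_0=\eps}=\int_0^1 S(\eps\wedge y)\,\phi(y)\,\frac{2}{\sigma^2(y)s(y)}\dd y.
\]
Dividing by $S(\eps)$ and letting $\eps\downarrow0$, the integrand $\frac{S(\eps\wedge y)}{S(\eps)}\phi(y)\frac{2}{\sigma^2(y)s(y)}$ increases pointwise on $(0,1]$ to $\phi(y)\frac{2}{\sigma^2(y)s(y)}$ — indeed $\frac{S(\eps\wedge y)}{S(\eps)}$ is nonincreasing in $\eps$ and equals $1$ as soon as $\eps\le y$ — so by monotone convergence
\[
	\lim_{\eps\downarrow0}\tfrac{1}{S(\eps)}\Exp[\big]{\textstyle\int_0^{T_0(Y)}\phi(Y_t)\dd t\given Y_0=\eps}=\int_0^1\frac{\phi(y)}{\frac12\sigma^2(y)s(y)}\dd y.
\]

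It remains to identify this left-hand limit with $\int\int_0^\infty\phi(\eta_t)\dd t\di Q(d\eta)$, and this is the one genuinely delicate step, because~\eqref{eq:Q} is available only for bounded continuous functionals that vanish on paths staying below some $\dl>0$, whereas $\chi\mapsto\int_0^\infty\phi(\chi_t)\dd t$ is neither bounded nor supported away from $0$. I would approximate by functionals of the form $\chi\mapsto\bigl(\int_0^N\phi(\chi_t)\dd t\bigr)\rho_\dl(\sup_{t\geq0}\chi_t)$, with $\rho_\dl$ a continuous cutoff vanishing on $[0,\dl]$, apply~\eqref{eq:Q} to each, and let $N\to\infty$ and $\dl\downarrow0$: the time-truncation error is controlled by the first-moment bound for $Y$ under $Q$ (the adaptation of Lemma~9.5 in~\textcite{Hutzenthaler2009}), and the error from the cutoff near $0$ is controlled by the integrability in~\eqref{eq:excursion_area} together with $\phi\le c^{-1}\at$. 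Making this interchange of limits rigorous is the main obstacle, and it is exactly what the cited adaptation of Lemma~9.8 supplies; in the final text a one-line reference to that adaptation therefore suffices.
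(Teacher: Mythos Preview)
Your approach is exactly what the paper does: both defer to the adaptation of Lemma~9.8 in~\textcite{Hutzenthaler2009}, and your Green-function/occupation-time sketch is a faithful outline of that adaptation.

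There is one small gap in your finiteness argument. Setting~\ref{set:coefficients} only asserts $f(y,0)>0$ for $y\in(0,1]$, not for $y=0$; by continuity one gets $f(0,0)\ge 0$, but equality is allowed, so your global minimum $c=\min_{y\in[0,1]}f(y,0)$ may be zero and the uniform bound $y\le c^{-1}\at(y)$ need not hold. The fix is to argue finiteness of $\int_0^1\frac{y}{\frac12\sigma^2(y)s(y)}\dd y$ directly from~\eqref{eq:Y_hits_zero_wpp}: split at the $x$ appearing there, on $[x,1]$ use $y\le\at(y)/\min_{z\in[x,1]}f(z,0)$ (this minimum is strictly positive by compactness and continuity of $f$), and on $[0,x]$ use $y\le S(y)/\min_{z\in[0,x]}s(z)$ (this minimum is strictly positive since $s$ is continuous on $[0,1)$ with $s(0)=1$). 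Both pieces are then controlled by the two integrals in~\eqref{eq:Y_hits_zero_wpp}.
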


\subsection{Poisson limit of independent diffusions with vanishing
immigration} \label{ss:poisson_limit}
To show the convergence of the loop-free processes in
Subsection~\ref{ss:convergence_loop_free} below, we first prove a Poisson
limit for independent diffusions with vanishing immigration, see
Lemma~\ref{l:vanishing_immigration_weak_process} below, based on the
following lemma which is essentially Lemma~4.19 in~\textcite{Hutzenthaler2012}
and proved there utilizing a time reversion argument.
\begin{lemma}[Poisson limit, constant case] \label{l:vanishing_immigration}
	Assume that \textup{Setting~\ref{set:excursion_measure}} holds,
	let $c,s\in[0,\infty)$, let $D_0 \in \N$ be such that it holds for all
	$D\in\N \cap [D_0,\infty)$ that $c/D + \hht_D(1)
	\leq 0$,
	for every $D\in\N\cap[D_0,\infty)$ let $(Y^{D,c}_{t,s})_{t\in[s,\infty)}$ be
	a solution of~\eqref{eq:SDE_YD} satisfying a.s.~that $Y^{D,c}_{s,s} = 0$,
	and let $\phi_D\colon [0,1]\to\R$, $D\in\N_0$,
	be functions with the property that
	\begin{equation}
		\sup_{\substack{x,y\in
		[0,1] \\ x\neq y}}\sup_{D\in\N}\frac{\abs{\phi_D(x)-\phi_D(y)}}{\abs{x-y}}<\infty,
	\end{equation}
	that ${\lim_{D\to\infty}D|\phi_D(0)|=0}$,
	and for all $y\in [0,1]$ that
	$\lim_{D\to\infty}\phi_D(y)=\phi_0(y)$. Then it holds for all
	$t\in[s,\infty)$ that
	\begin{equation} \label{eq:vanishing_immigration}
		\lim_{D\to\infty} D\Exp*{\phi_D\bigl(Y_{t,s}^{D,c}\bigr)}
		= c\int_s^t\int \phi_0(\eta_{t-u})\di Q(d\eta)\dd u.
	\end{equation}
\end{lemma}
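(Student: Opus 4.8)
The plan is to reduce the statement to the defining property~\eqref{eq:Q} of the excursion measure $Q$ via a time-reversal argument at the level of the SDE~\eqref{eq:SDE_YD}. First I would observe that for fixed $D \in \N \cap [D_0,\infty)$ the solution $(Y_{t,s}^{D,c})_{t \in [s,\infty)}$ starting at $0$ is the position at time $t$ of a diffusion which, because of the strictly positive immigration drift term $c/D$ at $0$ together with $h_D(0) \geq 0$, enters $(0,1)$ instantaneously and accumulates local time at $0$. The key heuristic is that $D\,\Exp{\phi_D(Y_{t,s}^{D,c})}$ picks out, to leading order, exactly one excursion away from $0$: an excursion started at some time $u \in [s,t]$ contributes to the value at time $t$ only through excursions that are ``still alive'' at time $t$, and the rate at which such excursions are initiated is, after rescaling by $D$, governed by $c$ times the density of the entrance law, which is precisely $\frac{1}{S(\eps)}$-normalized in~\eqref{eq:Q}. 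Concretely, I would decompose $Y^{D,c}$ into its excursions from $0$ using It\^o's excursion theory (or, following the cited Lemma~4.19 in~\textcite{Hutzenthaler2012}, by reversing time), so that $\Exp{\phi_D(Y_{t,s}^{D,c})}$ becomes an integral over excursion start times $u$ against the excursion measure of the process $Y^{D,c}$, with the local-time clock at $0$ providing the occupation density.

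The main technical step is then to identify the limit of the rescaled excursion measures. I would write, schematically,
\begin{equation*}
	D\,\Exp[\big]{\phi_D\bigl(Y_{t,s}^{D,c}\bigr)}
	= D\int_s^t \Exp[\big]{\ell^{D}(du)} \int F_{D,t-u}(\eta)\,\di Q_D(d\eta),
\end{equation*}
where $\ell^D$ is the local time of $Y^{D,c}$ at $0$ normalized so that $\Exp{\ell^D(du)} \approx \frac{c}{D}\,du$ to leading order (the factor $c/D$ being the immigration drift, since the remaining drift $\hht_D$ vanishes at $0$), where $Q_D$ is the (suitably normalized) excursion measure of $Y^{D,c}$ away from $0$, and where $F_{D,t-u}(\eta) = \phi_D(\eta_{t-u})\1_{\{\text{excursion length} > t-u\}}$ evaluates the excursion at the elapsed time. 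The $D$ in front cancels the $1/D$ in the local-time rate, producing the factor $c$ and the integral $\int_s^t \cdots \di u$. It then remains to pass to the limit inside: the excursion measures $Q_D$ of $Y^{D,c}$ converge to the excursion measure $Q$ of $Y$ because the generator of $Y^{D,c}$ converges to that of $Y$ (the drift $\frac{1}{D}g$ vanishes, $\hht_D \to h - h(0)$ and $h_D(0)$ contributes $\1_{k=0}$-type terms that here reduce since we are in the $Y^{D,c}$ setting with drift $\hht_D + \frac{c}{D}$, and $\hht_D \to h$ as $h(0)=0$), and this convergence is exactly what is packaged into~\eqref{eq:Q}. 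The assumptions $\lim_{D\to\infty}\phi_D(y) = \phi_0(y)$, uniform Lipschitz continuity of the $\phi_D$, and $\lim_{D\to\infty} D|\phi_D(0)| = 0$ let me replace $\phi_D$ by $\phi_0$ in the limit and control the contribution near the trap: the uniform Lipschitz bound gives $|\phi_D(\eta_{t-u})| \leq |\phi_D(0)| + C\,\eta_{t-u}$, the term $D|\phi_D(0)|$ is negligible after the $D$-rescaling by hypothesis, and $\int \eta_r \,\di Q(d\eta)$ together with the finiteness in Lemma~\ref{l:finite_excursion_area} (and the corresponding uniform-in-$D$ bounds, available since $\sup_D \norm{h_D}_{C^2} < \infty$) furnishes the integrable dominating function needed for dominated convergence in $u$ and in the excursion variable.

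I expect the main obstacle to be making the time-reversal/excursion decomposition rigorous uniformly in $D$ — that is, justifying the interchange of the limit $D\to\infty$ with the excursion-measure integral and the $du$-integral, and verifying that the rescaled local-time rate of $Y^{D,c}$ at $0$ really does converge to $c\,du$ rather than picking up corrections from $\hht_D$ near $0$ (it does not, since $\hht_D(0) = 0$ and the entrance behaviour is dominated by the $c/D$ term, but this requires care with the scale function $S$ and speed measure of $Y^{D,c}$ near $0$). Since this is precisely the content of Lemma~4.19 in~\textcite{Hutzenthaler2012}, established there by reversing time, the cleanest route is to cite that lemma for the constant-immigration, constant-$\phi$ core estimate and then upgrade to $D$-dependent coefficients $\hht_D$ and $D$-dependent functions $\phi_D$ by the perturbation arguments sketched above: for the coefficients one compares $Y^{D,c}$ (with drift $\hht_D + \frac{c}{D}$) to the corresponding process with the limiting drift via Lemma~\ref{l:YD_L1_distance} together with the uniform bound $\sup_D \norm{h_D}_{C^2} < \infty$, and for $\phi_D$ one uses the uniform Lipschitz bound and $D|\phi_D(0)| \to 0$ as above. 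This reduces everything to routine estimates plus one invocation each of~\eqref{eq:Q}, Lemma~\ref{l:finite_excursion_area}, and Lemma~4.19 in~\textcite{Hutzenthaler2012}.
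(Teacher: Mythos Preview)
Your proposal is correct and matches the paper's approach: the paper does not give its own proof of this lemma but simply states that it is ``essentially Lemma~4.19 in~\textcite{Hutzenthaler2012} and proved there utilizing a time reversion argument.'' Your sketch correctly identifies that citation as the core input, outlines the time-reversal/excursion heuristic behind it, and spells out the perturbation steps (via Lemma~\ref{l:YD_L1_distance} for the $D$-dependent drift $\hht_D$, and via the uniform Lipschitz bound together with $D|\phi_D(0)|\to 0$ for the $D$-dependent test functions) needed to pass from the cited result to the present formulation.
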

For every $T\in(0,\infty)$ and every $s\in[0,T)$ we define
\begin{equation}
	\MCE_{s,T} \defeq
	\begin{Bmatrix}
	C([s,T],[0,1]) \ni \eta \mapsto
	\prod_{i=1}^n \psi_i(\eta_{t_i}) \in\R \colon
	\text{$n\in\N$, $\psi_1,\dotsc,\psi_n \in \Lip([0,1],\R)$,}\\
	\text{and $t_1,\dotsc,t_n\in[s,T]$
	with $s \leq t_1 \leq \dotsb \leq t_n \leq T$}
	\end{Bmatrix}.
	\label{eq:MCE_sT}
\end{equation}
From the Lipschitz continuity and boundedness of the involved functions
in~\eqref{eq:MCE_sT}, it follows for all $T\in(0,\infty)$ and all $s\in[0,T)$ that
the elements of $\MCE_{s,T}$ are globally Lipschitz continuous in the sense of
Lemma~\ref{l:F_is_globally_Lipschitz}. The proof of
Lemma~\ref{l:F_is_globally_Lipschitz} is straightforward and therefore omitted.
\begin{lemma}[Lipschitz continuity] \label{l:F_is_globally_Lipschitz}
	Let $n\in\N$ and let $\psi_1,\dotsc,\psi_n \in
	\Lip([0,1],\R)$.
	Then there exists a constant $L \in [0,\infty)$ such that it holds for all
	$x_1,\dotsc,x_n \in [0,1]$ and all $y_1,\dotsc,y_n \in [0,1]$ that
	\begin{equation}\label{eq:F_is_globally_Lipschitz}
		\abs*{\prod_{i=1}^n \psi_i(x_i) - \prod_{i=1}^n
		\psi_i(y_i)} \leq L\sum_{i=1}^n \abs{x_i -
		y_i}.
	\end{equation}
\end{lemma}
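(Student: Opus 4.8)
The plan is to prove the estimate by a standard telescoping argument, exploiting that Lipschitz functions on the compact interval $[0,1]$ are bounded. First I would set $B \defeq 1 \vee \max_{i\in[n]}\norm{\psi_i}_\infty \in [1,\infty)$ and let $K \in [0,\infty)$ be a common Lipschitz constant for $\psi_1,\dotsc,\psi_n$, that is, a number with $\abs{\psi_i(x)-\psi_i(y)} \leq K\abs{x-y}$ for all $i\in[n]$ and all $x,y\in[0,1]$ (take the maximum of the individual Lipschitz constants). The constant in the assertion will then be $L \defeq B^{n-1}K \in [0,\infty)$.

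The key step is the telescoping identity: for all $x_1,\dotsc,x_n,y_1,\dotsc,y_n\in[0,1]$ it holds, with empty products read as $1$, that
\begin{equation*}
	\prod_{i=1}^n \psi_i(x_i) - \prod_{i=1}^n \psi_i(y_i)
	= \sum_{j=1}^n \biggl(\prod_{i=1}^{j-1}\psi_i(y_i)\biggr)\bigl(\psi_j(x_j)-\psi_j(y_j)\bigr)\biggl(\prod_{i=j+1}^n\psi_i(x_i)\biggr),
\end{equation*}
which follows by an immediate induction on $n$ (or by observing that the right-hand side is a collapsing sum). Taking absolute values and using the triangle inequality, then bounding each of the $n-1$ remaining factors $\abs{\psi_i(\cdot)}$ by $B$ and the factor $\abs{\psi_j(x_j)-\psi_j(y_j)}$ by $K\abs{x_j-y_j}$, yields
\begin{equation*}
	\abs*{\prod_{i=1}^n \psi_i(x_i) - \prod_{i=1}^n \psi_i(y_i)}
	\leq \sum_{j=1}^n B^{n-1}K\abs{x_j-y_j}
	= B^{n-1}K\sum_{j=1}^n \abs{x_j-y_j},
\end{equation*}
which is exactly~\eqref{eq:F_is_globally_Lipschitz} with $L = B^{n-1}K$.

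There is no genuine obstacle here; the statement is elementary. The only point worth a moment's attention is to choose $B \geq 1$ in the definition above, so that $B^{n-1}$ dominates $B^{k}$ for every integer $k$ with $0 \leq k \leq n-1$ and the (varying) number of bounded factors appearing in the different telescoping summands is controlled by a single constant. Alternatively, one can bypass even this by bounding the $j$-th summand by $\bigl(\prod_{i\in[n],\,i\neq j}\norm{\psi_i}_\infty\bigr)K\abs{x_j-y_j}$ and taking $L$ to be $K$ times the maximum over $j\in[n]$ of these products.
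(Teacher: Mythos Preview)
Your proof is correct and is exactly the standard telescoping argument the paper has in mind when it calls the proof ``straightforward'' and omits it. One small remark: in your telescoping identity each summand has precisely $n-1$ bounded factors (namely $j-1$ factors of the form $\psi_i(y_i)$ and $n-j$ of the form $\psi_i(x_i)$), so the bound $B^{n-1}$ follows directly without needing $B\geq 1$; your caveat about varying numbers of factors is unnecessary, though harmless.
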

The following two lemmas generalize Lemma~\ref{l:vanishing_immigration} in a
suitable way. The proof of Lemma~\ref{l:vanishing_immigration_piecewise}
below is analogous to the first part of the proof of Lemma~4.20
in~\textcite{Hutzenthaler2012} and therefore omitted here.
\begin{lemma}[Poisson limit, piecewise constant case]
	\label{l:vanishing_immigration_piecewise}
	Assume that \textup{Setting~\ref{set:excursion_measure}} holds and
	let $T\in (0,\infty)$.
	Then for all $n\in\N$, all $s\in[0,T)$,
	all $c_1,\dotsc,c_n \in [0,\infty)$, all
		$\psi_1,\dotsc,\psi_n \in \Lip([0,1],\R)$, all $t_0,\dotsc,t_n \in [s,T]$
	with $s = t_0 \leq t_1 \leq \dotsb \leq t_n \leq T$,
	all $\zeta\colon [0,\infty) \to [0,\infty)$ satisfying for all
	$t\in[0,\infty)$ that
	$\zeta(t) = \sum_{i=1}^n c_i \1_{[t_{i-1},t_i)}(t)$,
	all $D_0 \in \N$ such that it holds for all $D\in\N \cap [D_0,\infty)$
	that $\max\{c_1,\dotsc,c_n\}/D + \hht_D(1) \leq 0$,
	all solutions $(Y^{D,\zeta}_{t,s})_{t\in[s,\infty)}$, $D\in\N \cap
	[D_0,\infty)$, of~\eqref{eq:SDE_YD} satisfying for all
	$D\in\N\cap[D_0,\infty)$ a.s.~that $Y^{D,\zeta}_{s,s} = 0$,
	and all $F \in \MCE_{s,T}$ satisfying for all $\eta \in C([s,T],[0,1])$ that
	$F(\eta) = \prod_{i=1}^n \psi_i(\eta_{t_i})$ and $F(0) = 0$ it holds that
	\begin{equation}
		\lim_{D\to\infty}D\Exp*{ F\left( \bigl(Y^{D,\zeta}_{t,s}\bigr)_{t\in[s,T]}
		\right) } = \int_s^T
		\zeta(u)\int F\left( (\eta_{t-u})_{t\in[s,T]} \right)
		\di Q(d\eta)\dd u.
		\label{eq:vanishing_immigration_piecewise}
	\end{equation}
\end{lemma}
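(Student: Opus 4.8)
The plan is to prove \eqref{eq:vanishing_immigration_piecewise} by induction on $n$. For $n=1$ the requirement $F(0)=0$ forces $\psi_1(0)=0$; moreover $\zeta$ equals the constant $c_1$ on $[s,t_1)$ and vanishes on $[t_1,\infty)$, so $(Y^{D,\zeta}_{t,s})_{t\in[s,t_1]}$ agrees in law with $(Y^{D,c_1}_{t,s})_{t\in[s,t_1]}$, and \eqref{eq:vanishing_immigration_piecewise} is exactly Lemma~\ref{l:vanishing_immigration} applied with $\phi_D=\phi_0=\psi_1$ (note $D\abs{\psi_1(0)}=0$ and $\int_s^T\zeta(u)\cdots\,du=c_1\int_s^{t_1}\cdots\,du$). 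For the inductive step, let $G_D\colon[0,1]\to\R$ denote the map sending $y$ to the expectation of $\prod_{i=2}^n\psi_i(Y^{D,\zeta}_{t_i,t_1})$ for the solution of \eqref{eq:SDE_YD} with $g=\zeta$ started from $y$ at time $t_1$. Conditioning at time $t_1$ and using the Markov property of the (time-inhomogeneous) diffusion $Y^{D,\zeta}$ gives
\[
D\Exp*{\prod_{i=1}^n\psi_i\bigl(Y^{D,\zeta}_{t_i,s}\bigr)}
=\psi_1(0)\,D\Exp*{\prod_{i=2}^n\psi_i\bigl(Y^{D,\zeta}_{t_i,s}\bigr)}
+D\Exp*{\phi_D\bigl(Y^{D,\zeta}_{t_1,s}\bigr)},
\qquad\phi_D\defeq(\psi_1-\psi_1(0))\,G_D,
\]
so that $\phi_D(0)=0$.

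The first summand vanishes when $\psi_1(0)=0$; when $\psi_1(0)\neq0$, the hypothesis $F(0)=0$ forces $\prod_{i=2}^n\psi_i(0)=0$, and then restarting $Y^{D,\zeta}$ at $t_1$ (on $[t_1,\infty)$ the rate $\zeta$ is piecewise constant with the remaining jump points $t_1\le\dots\le t_n$) and peeling off the value at $t_1$ exactly as above reduces this term, by the induction hypothesis together with one further application of Lemma~\ref{l:vanishing_immigration} on $[s,t_1]$, to $\psi_1(0)\int_s^T\zeta(u)\int\prod_{i=2}^n\psi_i(\eta_{t_i-u})\,Q(d\eta)\,du$. (A fully careful write-up proves a slightly more flexible inductive statement in which $\zeta$ may have more jump points than $F$ has factors; this causes no difficulty.)

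For the second summand I would apply Lemma~\ref{l:vanishing_immigration} with $c=c_1$ (the threshold $D_0$ from the hypothesis works since $c_1\le\max_j c_j$) and the $D$-dependent immigration function $\phi_D$, checking its three hypotheses. First, $\phi_D(0)=0$, whence $\lim_D D\abs{\phi_D(0)}=0$. Second, $\phi_D$ is Lipschitz uniformly in $D$: the factor $\psi_1-\psi_1(0)$ is Lipschitz and bounded, and $G_D$ is Lipschitz uniformly in $D$ since, by Lemma~\ref{l:YD_L1_distance} with $g_D=\gt_D=\zeta$ (the $g$-difference term there vanishes because $\zeta$ does not depend on the space variable) and Lemma~\ref{l:F_is_globally_Lipschitz}, $\abs{G_D(y)-G_D(y')}\leq L\sum_{i=2}^n e^{L_h(t_i-t_1)}\abs{y-y'}$. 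Third, $\phi_D(y)\to(\psi_1(y)-\psi_1(0))\,G_\infty(y)$ for every $y\in[0,1]$, where $G_\infty(y)$ is the expectation of $\prod_{i=2}^n\psi_i(Y_{t_i})$ for the solution $Y$ of \eqref{eq:SDE_Y} with $Y_{t_1}=y$; this uses $G_D\to G_\infty$, which follows from stability of \eqref{eq:SDE_YD} under the convergence of coefficients $\tfrac1D\zeta\to0$ uniformly and $\hht_D\to h$ uniformly on $[0,1]$ (the latter holds since $h_D\to h$ pointwise, $\sup_D\norm{h_D}_{C^2}<\infty$, and $h_D(0)\to0$), proved by a Yamada--Watanabe-type $L^1$-estimate as in the proof of Lemma~\ref{l:YD_L1_distance}. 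Lemma~\ref{l:vanishing_immigration} then gives that the second summand converges to $c_1\int_s^{t_1}\int(\psi_1(\eta_{t_1-u})-\psi_1(0))\,G_\infty(\eta_{t_1-u})\,Q(d\eta)\,du$.

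It remains to add the two limits. By the Markov property of the excursion measure $Q$ --- a $Q$-excursion, conditioned on its value at a fixed positive time, evolves afterwards like the diffusion $Y$ of \eqref{eq:SDE_Y}; see \textcite{Hutzenthaler2009,PitmanYor1982} --- one has, for $u\in(s,t_1)$, $\int(\psi_1(\eta_{t_1-u})-\psi_1(0))\,G_\infty(\eta_{t_1-u})\,Q(d\eta)=\int(\psi_1(\eta_{t_1-u})-\psi_1(0))\prod_{i=2}^n\psi_i(\eta_{t_i-u})\,Q(d\eta)$; since $\psi_1-\psi_1(0)$ vanishes at $0$ this integrand vanishes for $u\geq t_1$, while $\zeta\equiv c_1$ on $[s,t_1)$, so the second limit equals $\int_s^T\zeta(u)\int(\psi_1(\eta_{t_1-u})-\psi_1(0))\prod_{i=2}^n\psi_i(\eta_{t_i-u})\,Q(d\eta)\,du$. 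Adding the first limit recovers $\int_s^T\zeta(u)\int\prod_{i=1}^n\psi_i(\eta_{t_i-u})\,Q(d\eta)\,du=\int_s^T\zeta(u)\int F((\eta_{t-u})_{t\in[s,T]})\,Q(d\eta)\,du$; finiteness of all integrals follows from $\abs{\prod_{i=1}^n\psi_i(\eta_{t_i-u})}\leq C\,\eta_{t_k-u}$ for an index $k$ with $\psi_k(0)=0$ together with Lemma~\ref{l:finite_excursion_area}. The main obstacle is the third point above --- the stability of the degenerate SDE \eqref{eq:SDE_YD} both in the initial datum and in the coefficients, which supplies the pointwise limit and the uniform Lipschitz bound for $\phi_D$ --- and, more routinely, the correct invocation of the Markov property of $Q$ in the $\sigma$-finite setting.
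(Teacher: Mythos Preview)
Your proposal is correct and follows essentially the same approach as the paper, which omits the proof and defers to the first part of the proof of Lemma~4.20 in \textcite{Hutzenthaler2012}: induction on $n$, conditioning at the first time point via the Markov property, applying Lemma~\ref{l:vanishing_immigration} to the resulting $D$-dependent functional $\phi_D=(\psi_1-\psi_1(0))G_D$ (with the uniform Lipschitz bound from Lemma~\ref{l:YD_L1_distance} and the pointwise limit from stability of~\eqref{eq:SDE_YD}), and identifying the limit through the Markov property of the excursion measure. Your parenthetical remark that one should really induct over a slightly more flexible statement (allowing $\zeta$ to have more jump points than $F$ has factors) is the right way to handle the first summand cleanly; alternatively, since the lemma allows $t_{i-1}=t_i$, one can always pad $F$ with trivial factors to align with the partition of $\zeta$.
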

The following lemma generalizes Lemma~\ref{l:vanishing_immigration_piecewise}
and is a crucial ingredient in the proof of
Lemma~\ref{l:vanishing_immigration_weak_process} below.
\begin{lemma}[Poisson limit, general case]\label{l:vanishing_immigration_general}
	Assume that \textup{Setting~\ref{set:excursion_measure}} and
	\textup{Setting~\ref{set:g_D}} hold,
	let $T\in(0,\infty)$, let $s\in[0,T)$,
	for every $D\in\N$ let $(Y^{D,g_D}_{t,s})_{t\in[s,\infty)}$ be a solution
	of~\eqref{eq:SDE_YD} satisfying a.s.~that
	$Y^{D,g_D}_{s,s} = 0$,
	and let $g\colon [0,\infty)\times [0,1] \to \R$ be a measurable function satisfying
	for all $t\in[0,\infty)$ that $g(t,0) \geq 0$, that $[0,1] \ni x \mapsto
	g(t,x)$ is continuous, and that
	\begin{equation}
		\lim_{D\to\infty}\int_s^T \sup_{x\in [0,1]}\abs{g_D(u,x)-g(u,x)}\dd
		u = 0.
		\label{eq:g_D_convergence}
	\end{equation}
	Then it holds for all $F\in\MCE_{s,T}$ with $F(0) = 0$ that
	\begin{equation}
		\lim_{D\to\infty}D\E\left[ F\left( \bigl(Y^{D,g_D}_{t,s}\bigr)_{t\in[s,T]} \right)
		\right] = \int_s^\infty g(u,0)\int F\left( (\eta_{t-u})_{t\in[s,T]}
		\right) \di Q(d\eta)\dd u \in \R.
		\label{eq:vanishing_immigration_general}
	\end{equation}
\end{lemma}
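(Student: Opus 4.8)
The plan is to reduce the claim to the piecewise constant case, Lemma~\ref{l:vanishing_immigration_piecewise}. Write $F(\eta)=\prod_{j=1}^{n}\psi_j(\eta_{r_j})$ with $s\le r_1\le\dots\le r_n\le T$ and $\psi_1,\dots,\psi_n\in\Lip([0,1],\R)$; since $F(0)=\prod_j\psi_j(0)=0$, there is a $j_0$ with $\psi_{j_0}(0)=0$. By \eqref{eq:g_D_convergence} and \eqref{eq:g_D_squareint} the function $u\mapsto g(u,0)$ lies in $L^1([s,T])$, and it is nonnegative; so for each $\eps>0$ I would pick a nonnegative piecewise constant $\zeta_\eps\colon[0,\infty)\to[0,\infty)$, constant on the pieces of a partition $s=t_0\le\dots\le t_m\le T$ refining $\{r_1,\dots,r_n\}$, with $\int_s^T\abs{g(u,0)-\zeta_\eps(u)}\dd u<\eps$. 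Rewriting $F$ along this refined partition (inserting trivial factors $\psi\equiv1$, which keeps $F(0)=0$), and using $\hht_D(1)=h_D(1)-h_D(0)\le h_D(1)\to h(1)<0$ together with boundedness of $\zeta_\eps$ to verify the boundary hypothesis of Lemma~\ref{l:vanishing_immigration_piecewise} for all large $D$, that lemma yields
$\lim_{D\to\infty}D\,\E[F((Y^{D,\zeta_\eps}_{t,s})_{t\in[s,T]})]=\int_s^T\zeta_\eps(u)\int F((\eta_{t-u})_{t\in[s,T]})\,Q(d\eta)\dd u=:I_\eps$.

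Next I would bound the $D$-scaled difference between $F$ along $Y^{D,g_D}$ and along $Y^{D,\zeta_\eps}$. A first-moment estimate: taking expectations in \eqref{eq:SDE_YD}, using $\hht_D(y)\le L_h y$, boundedness of the diffusion coefficient on $[0,1]$, and Gronwall's inequality gives $\E[Y^{D,g_D}_{u,s}]\le\tfrac1D e^{L_h(T-s)}\int_s^T\sup_{x\in[0,1]}\abs{g_D(v,x)}\dd v$, which is $O(1/D)$ uniformly in $u\in[s,T]$ by \eqref{eq:g_D_squareint} and Cauchy--Schwarz; in particular $Y^{D,g_D}_{u,s}\to0$ in probability. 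Since the inserted factors do not affect the Lipschitz bound, Lemma~\ref{l:F_is_globally_Lipschitz} gives a constant $L$ depending only on $\psi_1,\dots,\psi_n$ with $\abs{F(\chi)-F(\eta)}\le L\sum_{j=1}^n\abs{\chi_{r_j}-\eta_{r_j}}$; feeding this into Lemma~\ref{l:YD_L1_distance} (whose initial-value term vanishes since both processes start at $0$ at time $s$), the prefactor $D$ cancels the $\tfrac1D$ there and one is left with
\[
	D\bigl|\E[F((Y^{D,g_D}_{t,s})_{t\in[s,T]})]-\E[F((Y^{D,\zeta_\eps}_{t,s})_{t\in[s,T]})]\bigr|\le Lne^{L_h(T-s)}\int_s^T\E\bigl[\,\bigl|g_D(u,Y^{D,g_D}_{u,s})-\zeta_\eps(u)\bigr|\,\bigr]\dd u.
\]
Splitting the integrand as $\sup_x\abs{g_D(u,x)-g(u,x)}+\abs{g(u,Y^{D,g_D}_{u,s})-g(u,0)}+\abs{g(u,0)-\zeta_\eps(u)}$, the first term integrates to $0$ by \eqref{eq:g_D_convergence}, the expectation of the second integrates to $0$ by dominated convergence (using $Y^{D,g_D}_{u,s}\to0$ in probability, continuity of $g(u,\cdot)$ on $[0,1]$, and the $L^1([s,T])$ dominating function $2\sup_x\abs{g(\cdot,x)}$), and the third is $<\eps$; hence $\limsup_{D\to\infty}D\bigl|\E[F((Y^{D,g_D}_{t,s})_t)]-\E[F((Y^{D,\zeta_\eps}_{t,s})_t)]\bigr|\le Lne^{L_h(T-s)}\eps$.

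Writing $a_D\defeq D\,\E[F((Y^{D,g_D}_{t,s})_{t\in[s,T]})]$, the two previous steps give $\abs{I_\eps-\limsup_D a_D}\le Lne^{L_h(T-s)}\eps$ and $\abs{I_\eps-\liminf_D a_D}\le Lne^{L_h(T-s)}\eps$ for all $\eps>0$, so $\lim_{D\to\infty}a_D$ exists, say $=a$, and $I_\eps\to a$ as $\eps\to0$. To identify $a$, set $w(u)\defeq\int F((\eta_{t-u})_{t\in[s,T]})\,Q(d\eta)$; from $\psi_{j_0}(0)=0$ one gets $\abs{F(\chi)}\le C\chi_{r_{j_0}}$ with $C\defeq\Lip(\psi_{j_0})\prod_{j\ne j_0}\norm{\psi_j}_\infty$, hence $\abs{w(u)}\le C\int\eta_{r_{j_0}-u}\,Q(d\eta)$ (with $\eta_v\defeq0$ for $v\le0$). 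Now $\sup_{v\ge0}\int\eta_v\,Q(d\eta)<\infty$: since $S$ is the scale function of $Y$, $S(Y_\cdot)$ is a nonnegative local martingale, hence a supermartingale, so $\E[S(Y_v)\mid Y_0=\delta']\le S(\delta')$; as $y\mapsto y/S(y)$ is bounded on $(0,1)$ by some $K_0<\infty$ (it tends to $1$ at $0$ since $s$ is continuous with $s(0)=1$, so $S(y)\sim y$, and it stays bounded near $1$ whether $S(1^-)$ is finite or infinite), we get $\E[Y_v\mid Y_0=\delta']\le K_0 S(\delta')$; applying \eqref{eq:Q} to the admissible functional $\chi\mapsto(\chi_v-\delta)^+$ and letting $\delta\downarrow0$ by monotone convergence gives $\int\eta_v\,Q(d\eta)\le K_0$. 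Thus $w\in L^\infty([s,T])$, whence $I_\eps=\int_s^T\zeta_\eps(u)w(u)\dd u\to\int_s^T g(u,0)w(u)\dd u$ as $\eps\to0$ (using $\int_s^T\abs{\zeta_\eps-g(\cdot,0)}<\eps$), i.e.\ $a=\int_s^T g(u,0)w(u)\dd u$. Finally $\int_s^T=\int_s^\infty$ because for $u>T$ the path $(\eta_{t-u})_{t\in[s,T]}$ is identically $0$ so $F$ of it equals $F(0)=0$, and the value is finite, which is \eqref{eq:vanishing_immigration_general}.

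The main obstacle is this last identification of the limit: passing $\eps\to0$ inside $\int_s^T\zeta_\eps(u)w(u)\dd u$ requires $w$ to be bounded, and the estimate $\sup_{v\ge0}\int\eta_v\,Q(d\eta)<\infty$ — essentially that the excursion entrance law of $Y$ has uniformly bounded first moment — is the delicate point, since the functional $\eta\mapsto\eta_v$ falls outside the class of test functions for which \eqref{eq:Q} is stated and must be reached through the scale-function supermartingale estimate together with the monotone approximation above. The remaining ingredients are a routine combination of the $L^1$-regularity estimate, the Lipschitz bound, the first-moment estimate, and dominated convergence.
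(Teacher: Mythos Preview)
Your proof is correct and reaches the same conclusion, but the route differs from the paper's. The paper proceeds in two stages: first it assumes that $(g_D)$ and $g$ are uniformly bounded by some $K$, chooses step functions $\zeta^{(n)}\le K$ with $\zeta^{(n)}\to g(\cdot,0)$ almost everywhere, and then passes to the limit on the right-hand side by dominated convergence using only that $u\mapsto\int|F((\eta_{t-u})_t)|\,Q(d\eta)$ lies in $L^1([s,T])$ (a direct consequence of Lemma~\ref{l:finite_excursion_area}). The unbounded case is then handled by truncating $g_D\wedge K$, decomposing $F=F^+-F^-$, and applying monotone convergence in $K$.

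You instead work in one pass: you approximate $g(\cdot,0)$ in $L^1$ by nonnegative step functions $\zeta_\eps$, and to pass $\eps\to 0$ in $\int_s^T\zeta_\eps(u)w(u)\,du$ you prove the stronger statement that $w\in L^\infty([s,T])$, via the uniform entrance-law bound $\sup_{v\ge 0}\int\eta_v\,Q(d\eta)<\infty$. Your scale-function supermartingale argument for this bound is sound (the test functional $\chi\mapsto(\chi_v-\delta)^+$ is admissible for~\eqref{eq:Q}, the estimate $y\le K_0 S(y)$ holds on $[0,1)$ since $S(y)\sim y$ near $0$ and $y/S(y)$ stays bounded near $1$, and monotone convergence in $\delta$ is legitimate). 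This buys you a shorter, truncation-free proof at the cost of establishing a fact about $Q$ that is slightly sharper than what the paper invokes; conversely, the paper's two-step version needs nothing beyond the finite-excursion-area lemma already recorded in the text.
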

\begin{proof}
	In a first step, we assume that $(g_D)_{D\in\N}$ and $g$ are uniformly
	bounded by $K \in \N$.
	Fix $F\in\MCE_{s,T}$ with $F(0) = 0$ for the rest of the proof and let $m\in\N$,
	$\psi_1,\dotsc,\psi_m\in\Lip([0,1],\R)$, and $t_1,\dotsc,t_m\in[s,T]$ with $s\leq t_1
	\leq \dotsb \leq t_m \leq T$ be such that it holds for all $\eta \in
	C([s,T],[0,1])$ that $F(\eta) = \prod_{i=1}^m \psi_i(\eta_{t_i})$. 
	We choose step functions
	$\zeta^{(n)}\colon [s,T] \to [0,\infty)$, $n\in\N$, with the property that
	$\zeta^{(n)}(\cdot) \to g(\cdot,0)$ almost everywhere as $n\to\infty$ and
	such that it holds for all $n\in\N$ that
	$\zeta^{(n)} \leq K$. For every $n\in\N$ we extend $\zeta^{(n)}$ to
	$[0,\infty)$ by setting it to zero outside of $[s,T]$.
	Setting~\ref{set:coefficients} implies the existence of $D_0\in\N$
	such that
	we have for all $D\in\N\cap [D_0,\infty)$ that $K/D + \hht_D(1) \leq 0$. For
	every $n\in\N$ and every $D\in\N\cap [D_0,\infty)$ let
	$(Y^{D,\zeta^{(n)}}_{t,s})_{t\in[s,\infty)}$ be
	a solution of~\eqref{eq:SDE_YD} satisfying a.s.~that
	$Y^{D,\zeta^{(n)}}_{s,s} = 0$.
	Since we
	may let $F$ depend trivially on further time points,
	Lemma~\ref{l:vanishing_immigration_piecewise} yields for every $n\in\N$ that
	\begin{equation}
		\lim_{D\to\infty}D\E\left[ F\left(
			\bigl(Y^{D,\zeta^{(n)}}_{t,s}\bigr)_{t\in[s,T]} \right) \right] =
			\int_s^T \zeta^{(n)}(u)\int F\left( (\eta_{t-u})_{t\in[s,T]}
		\right) \di Q(d\eta)\dd u.
		\label{eq:vanishing_immigration_partitions}
	\end{equation}
	We are going to show that~\eqref{eq:vanishing_immigration_partitions} converges
	to~\eqref{eq:vanishing_immigration_general} as $n\to\infty$.
	Let
	$L_F \in [0,\infty)$ be a Lipschitz constant of $F$ in the sense of
	Lemma~\ref{l:F_is_globally_Lipschitz}.
	Then Lemma~\ref{l:F_is_globally_Lipschitz} and
	Lemma~\ref{l:YD_L1_distance} applied with $\gt_D = \zeta^{(n)}$ imply for all
	$n\in\N$ and all $D\in\N\cap [D_0,\infty)$ that
	\begin{equation}
		\begin{split}
			\MoveEqLeft \abs*{D\E\left[ F\left(
			\bigl(Y_{t,s}^{D,g_D}\bigr)_{t\in[s,T]}
			\right)\right] - D\E\left[ F\left( \bigl(Y_{t,s}^{D,\zeta^{(n)}}\bigr)_{t\in[s,T]}
			\right)\right]}\\
			&\leq DL_F \sum_{i=1}^m
			\E\bigl[\abs[\big]{Y_{t_i,s}^{D,g_D} - Y_{t_i,s}^{D,\zeta^{(n)}}}\bigr]\\
			&\leq mL_F e^{L_h(T-s)}\int_s^T \Exp[\big]{\abs[\big]{g_D\bigl(u,Y^{D,g_D}_{u,s}\bigr) -
			\zeta^{(n)}(u)}} \dd u\\
			&\leq mL_F e^{L_h(T-s)}
			\biggl( \int_s^T \sup_{x\in [0,1]}\abs{g_D(u,x)-g(u,x)} \dd u + \int_s^T
			\E\bigl[\abs[\big]{g\bigl(u,Y^{D,g_D}_{u,s}\bigr) - g(u,0)}\bigr]\dd u\\
			&\quad+\int_s^T
			\abs[\big]{g(u,0)-\zeta^{(n)}(u)} \dd u
			\biggr) .
		\end{split}
		\label{eq:immigration_distance_approx}
	\end{equation}
	The first summand on the right-hand side
	of~\eqref{eq:immigration_distance_approx} converges to zero as $D\to\infty$ by~\eqref{eq:g_D_convergence}. The dominated convergence theorem
	and the fact that $Y^{D,g_D}_{\cdot,s}$ converges to zero in distribution as
	$D\to\infty$
	yield that the second summand on the right-hand side
	of~\eqref{eq:immigration_distance_approx} converges to zero as $D\to\infty$. Finally, the
	dominated convergence theorem ensures that
	the third summand on the right-hand side
	of~\eqref{eq:immigration_distance_approx} converges to zero as $n\to\infty$. Altogether, it follows that
	\begin{equation}
		\begin{split}
			\lim_{n\to\infty}\lim_{D\to\infty}{\abs*{D\E\left[ F\left(
			\bigl(Y_{t,s}^{D,g_D}\bigr)_{t\in[s,T]}
			\right)\right] - D\E\left[ F\left( \bigl(Y_{t,s}^{D,\zeta^{(n)}}\bigr)_{t\in[s,T]}
			\right)\right]}} = 0.
		\end{split}
	\end{equation}
	This proves convergence of the left-hand side
	of~\eqref{eq:vanishing_immigration_partitions} to the left-hand side
	of~\eqref{eq:vanishing_immigration_general} as $n\to\infty$.
	Lemma~\ref{l:F_is_globally_Lipschitz}, $F(0)=0$, and
	Lemma~\ref{l:finite_excursion_area} ensure that $\int_s^T \int \abs{F(
	(\eta_{t-u})_{t\in[s,T]} )}\di Q(d\eta) \dd u < \infty$. This, the fact that
	we have for all $n\in\N$ that $\zeta^{(n)} \leq K$, and the
	dominated convergence theorem show that
	\begin{equation}
		\begin{split}
			\lim_{n\to\infty} \int_s^T \zeta^{(n)}(u) \int F\left( (\eta_{t-u})_{t\in[s,T]} \right)
			\di Q(d\eta) \dd u = \int_s^T g(u,0)\int F\left( (\eta_{t-u})_{t\in[s,T]} \right)
			\di Q(d\eta) \dd u.
		\end{split}
		\label{eq:vig_almostRHS}
	\end{equation}
	It remains to note that $F(0) = 0$ ensures
	\begin{equation}
		\int_s^T g(u,0)\int F\left( (\eta_{t-u})_{t\in[s,T]} \right)
		\di Q(d\eta)\dd u = \int_s^\infty g(u,0) \int F\left(
		(\eta_{t-u})_{t\in[s,T]} \right) \di Q(d\eta)\dd u.
		\label{eq:vig_expand_integral}
	\end{equation}
	Hence, \eqref{eq:vig_almostRHS} and~\eqref{eq:vig_expand_integral} show that
	the right-hand sides of~\eqref{eq:vanishing_immigration_partitions}
	and~\eqref{eq:vanishing_immigration_general} are equal in the limit
	$n\to\infty$.

	For the rest of the proof, we return to the case of general
	$(g_D)_{D\in\N}$ and $g$.
	For all $D,K\in\N$ let $(Y^{D,g_D\wedge K}_{t,s})_{t\in[s,\infty)}$
	be a solution of~\eqref{eq:SDE_YD} satisfying a.s.~that $Y^{D,g_D\wedge
	K}_{s,s} = 0$.
	It holds for all $K\in\N$ that
	\begin{equation}
		\lim_{D\to\infty}\int_s^T \sup_{x\in [0,1]}\abs{g_D(u,x)\wedge K - g(u,x)\wedge K}\dd u \leq
		\lim_{D\to\infty}\int_s^T \sup_{x\in [0,1]}\abs{g_D(u,x)- g(u,x)}\dd u = 0.
		\label{eq:vig_assK}
	\end{equation}
	We note that~\eqref{eq:g_D_squareint} and~\eqref{eq:g_D_convergence}
	imply that
	\begin{equation}
		\int_s^T \sup_{x\in[0,1]}\abs{g(u,x)} \dd u < \infty.
		\label{eq:int_g_finite}
	\end{equation}
	Lemma~\ref{l:YD_L1_distance} applied with $(\gt_D)_{D\in\N} = (g_D\wedge
	K)_{D\in\N}$, arguments as in~\eqref{eq:immigration_distance_approx},
	the dominated convergence theorem, and~\eqref{eq:int_g_finite} then show that
	\begin{equation}
		\begin{split}
			\MoveEqLeft \varlimsup_{K\to\infty}\varlimsup_{D\to \infty}\abs*{D\E\left[ F\left(
			\bigl(Y_{t,s}^{D,g_D}\bigr)_{t\in[s,T]}
			\right)\right] - D\E\left[ F\left( \bigl(Y_{t,s}^{D,g_D\wedge K}\bigr)_{t\in[s,T]}
			\right)\right]}\\
			&\leq mL_F e^{L_h(T-s)}\varlimsup_{K\to\infty}\int_s^T \abs{g(u,0) - g(u,0)\wedge
			K} \dd u = 0.
		\end{split}
		\label{eq:vig_LHSK}
	\end{equation}
	For all $i\in[m]$ and $x\in [0,1]$ we write $\psi_i(x) =
	\psi_i(x)^+ - \psi_i(x)^-$ to obtain a decomposition of $F$ of the form $F =
	F^+ - F^-$, where $F^+$ and $F^-$ are finite sums of nonnegative functions
	in $\MCE_{s,T}$ and satisfy $F^+(0) = 0 = F^-(0)$. Due to this
	and~\eqref{eq:vig_assK}, the first part of the
	proof yields for all $K\in\N$ that
	\begin{equation}
		\lim_{D\to\infty}D\E\left[ F^+\left( \bigl(Y^{D,g_D\wedge K}_{t,s}\bigr)_{t\in[s,T]} \right)
		\right] = \int_s^\infty \bigl(g(u,0) \wedge K\bigr)\int F^+\left( (\eta_{t-u})_{t\in[s,T]}
		\right) \di Q(d\eta)\dd u.
		\label{eq:vig_FKconv}
	\end{equation}
	The monotone convergence theorem ensures that
	\begin{equation}
		\lim_{K\to\infty}\int_s^\infty \bigl(g(u,0) \wedge K\bigr)\int F^+\left(
		(\eta_{t-u})_{t\in[s,T]} \right) \di Q(d\eta)\dd u = 
		\int_s^\infty g(u,0)\int F^+\left(
		(\eta_{t-u})_{t\in[s,T]} \right) \di Q(d\eta)\dd u.
		\label{eq:vig_RHSK}
	\end{equation}
	Moreover, Lemma~\ref{l:F_is_globally_Lipschitz}, $F(0) = 0$,
	Lemma~\ref{l:YD_L1_distance}, \eqref{eq:vig_assK},
	and~\eqref{eq:int_g_finite} yield
	for all $K\in\N$ that
	\begin{equation}
		\begin{split}
			\lim_{D\to\infty}D\E\left[ F^+\left( \bigl(Y^{D,g_D\wedge
			K}_{t,s}\bigr)_{t\in[s,T]} \right) \right] &\leq L_F m e^{L_h(T-s)}\int_s^T
			\sup_{x\in [0,1]}\abs{g(u,x)\wedge K} \dd u \\
			&\leq L_F m e^{L_h(T-s)}\int_s^T \sup_{x\in [0,1]}\abs{g(u,x)}\dd u < \infty,
		\end{split}
	\end{equation}
	which together with~\eqref{eq:vig_FKconv} yields that
	\begin{equation}
		\sup_{K\in\N}\int_s^\infty \bigl(g(u,0) \wedge K\bigr)\int F^+\left(
		(\eta_{t-u})_{t\in[s,T]} \right) \di Q(d\eta)\dd u < \infty.
	\end{equation}
	The same is true when we replace $F^+$ by $F^-$. This implies that
		$\int_s^\infty g(u,0)\int F(
		(\eta_{t-u})_{t\in[s,T]} ) \di Q(d\eta)\dd u$
	is well-defined as a real number.
	Hence, combining~\eqref{eq:vig_LHSK},~\eqref{eq:vig_FKconv},
	and~\eqref{eq:vig_RHSK} for $F^+$ and $F^-$ yields that
	\begin{equation}
		\begin{split}
			\lim_{D\to\infty}D\E\left[ F\left( \bigl(Y^{D,g_D}_{t,s}\bigr)_{t\in[s,T]} \right)
			\right] &= \lim_{K\to\infty}\lim_{D\to\infty}D\E\left[ F\left(
			\bigl(Y^{D,g_D\wedge K}_{t,s}\bigr)_{t\in[s,T]} \right) \right] \\
			& = \int_s^\infty g(u,0)\int F\left( (\eta_{t-u})_{t\in[s,T]} \right)
			\di Q(d\eta)\dd u \in \R.
		\end{split}
	\end{equation}
	This finishes the proof of Lemma~\ref{l:vanishing_immigration_general}.
\end{proof}
With Lemma~\ref{l:vanishing_immigration_general} in hand, we are ready to
prove the following Poisson limit lemma for independent diffusions with vanishing
immigration, which generalizes Lemma~4.21 in~\textcite{Hutzenthaler2012} to
state-dependent~$g$.
\begin{lemma}[Poisson limit for independent diffusions with vanishing
	immigration]
	\label{l:vanishing_immigration_weak_process}
	Assume that \textup{Setting~\ref{set:excursion_measure}} and
	\textup{Setting~\ref{set:g_D}} hold,
	let $s\in[0,\infty)$, 
	for every $D\in\N$ let $(Y^{D,g_D}_{t,s}(i))_{t\in[s,\infty)}$,
	$i\in[D]$, be independent solutions of~\eqref{eq:SDE_YD}
	satisfying for all $i\in[D]$ a.s.~that $Y^{D,g_D}_{s,s}(i) = 0$,
	let $g\colon [0,\infty)\times [0,1] \to \R$ be a measurable function satisfying
	for all $t\in[0,\infty)$ that $g(t,0) \geq 0$, that $[0,1] \ni x \mapsto
	g(t,x)$ is continuous, and that
	\begin{equation}
		\lim_{D\to\infty}\int_s^{t \vee s} \sup_{x\in [0,1]}\abs{g_D(u,x)-g(u,x)}\dd
		u = 0,
		\label{eq:convergence_gD}
	\end{equation}
	and let $\Pi$ be a Poisson point
	process on $[s,\infty) \times U$ with intensity measure $\Exp{\Pi(du \otimes
	d\eta)} = g(u,0) \dd u \otimes Q(d\eta)$.
	Then it holds that
	\begin{equation}
		\left(\sum_{i=1}^D Y_{t,s}^{D,g_D}(i) \delta_{Y_{t,s}^{D,g_D}(i)}
		\right)_{t\in[s,\infty)} \xRightarrow{D\to\infty} \left(\int
		\eta_{t-u}\delta_{\eta_{t-u}} \di \Pi(du \otimes
		d\eta)\right)_{t\in[s,\infty)}
		\label{eq:vanishing_immigration_weak_process}
	\end{equation}
	in the sense of convergence in distribution on
	$D([s,\infty),\MCM_\textup{f}([0,1]))$.
\end{lemma}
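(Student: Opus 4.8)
The plan is to prove \eqref{eq:vanishing_immigration_weak_process} by separately establishing convergence of the finite-dimensional distributions and tightness of $\{(\mu^D_t)_{t\in[s,\infty)}\colon D\in\N\}$ in $D([s,\infty),\MCM_\textup{f}([0,1]))$, where $\mu^D_t\defeq\sum_{i=1}^D Y^{D,g_D}_{t,s}(i)\,\delta_{Y^{D,g_D}_{t,s}(i)}$, and then combining the two using that the limiting process $\mu^\infty_t\defeq\int\eta_{t-u}\delta_{\eta_{t-u}}\di\Pi(du\otimes d\eta)$ has continuous paths.

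For the finite-dimensional distributions, fix $T\in(s,\infty)$, times $s\le t_1\le\dotsb\le t_k\le T$, and nonnegative Lipschitz functions $\phi_1,\dotsc,\phi_k\colon[0,1]\to[0,\infty)$; since such test functions are convergence-determining for $\MCM_\textup{f}([0,1])$, it suffices to identify the limit of the joint Laplace functional. Using that $Y^{D,g_D}_{\cdot,s}(1),\dotsc,Y^{D,g_D}_{\cdot,s}(D)$ are i.i.d., one has
\begin{equation*}
	\E\Bigl[\exp\Bigl(-\smallsum_{j=1}^k\langle\mu^D_{t_j},\phi_j\rangle\Bigr)\Bigr]=(1-a_D)^D,\qquad a_D\defeq\E\bigl[G\bigl((Y^{D,g_D}_{t,s}(1))_{t\in[s,T]}\bigr)\bigr],
\end{equation*}
where $G(\eta)\defeq 1-\prod_{j=1}^k e^{-\phi_j(\eta_{t_j})\eta_{t_j}}$ is $[0,1]$-valued and vanishes at the zero path. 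The telescoping identity $1-\prod_{j=1}^k b_j=\sum_{j=1}^k(1-b_j)\prod_{l<j}b_l$ writes $G=\sum_{j=1}^k F_j$ with $F_j(\eta)=\bigl(1-e^{-\phi_j(\eta_{t_j})\eta_{t_j}}\bigr)\prod_{l<j}e^{-\phi_l(\eta_{t_l})\eta_{t_l}}$, and each $F_j$ is a product of Lipschitz functions of $[0,1]$-valued arguments evaluated at the ordered times $t_1\le\dotsb\le t_j$ in $[s,T]$, hence lies in $\MCE_{s,T}$ and satisfies $F_j(0)=0$. Applying Lemma~\ref{l:vanishing_immigration_general} (whose assumptions on $g$ and on $g_D\to g$ are exactly those in force here, with \eqref{eq:convergence_gD} supplying \eqref{eq:g_D_convergence}) to each $F_j$ and summing gives
\begin{equation*}
	\lim_{D\to\infty}D\,a_D=\int_s^\infty g(u,0)\int\Bigl(1-\exp\bigl(-\smallsum_{j=1}^k\phi_j(\eta_{t_j-u})\,\eta_{t_j-u}\bigr)\Bigr)\di Q(d\eta)\dd u\in[0,\infty).
\end{equation*}
Moreover $a_D\to0$: since $G$ is Lipschitz with $G(0)=0$, one has $a_D\le C\sum_{j=1}^k\E[Y^{D,g_D}_{t_j,s}(1)]$, and $\E[Y^{D,g_D}_{t_j,s}(1)]\le C'/D$ by Lemma~\ref{l:YD_L1_distance} applied with $\gt_D\equiv0$ (the solution of \eqref{eq:SDE_YD} with $g\equiv0$ started at $0$ never leaves the trap $0$, since $\hht_D(0)=\sigma^2(0)=0$) together with \eqref{eq:g_D_squareint}. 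Hence $(1-a_D)^D\to\exp(-\lim_{D\to\infty}Da_D)$, and by the exponential formula for the Poisson point process $\Pi$ this limit equals $\E[\exp(-\sum_{j=1}^k\langle\mu^\infty_{t_j},\phi_j\rangle)]$; convergence of the finite-dimensional distributions follows.

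For tightness I use Jakubowski's criterion on $D([s,\infty),\MCM_\textup{f}([0,1]))$. Compact containment holds because $\{\nu\colon\nu([0,1])\le M\}$ is compact in $\MCM_\textup{f}([0,1])$ and, by Markov's inequality and Lemma~\ref{l:YD_second_moment}, $\sup_{D\in\N}\Prob(\sup_{t\in[s,T]}\mu^D_t([0,1])>M)\le M^{-2}\sup_{D\in\N}\E[\sup_{t\in[s,T]}(\sum_{i=1}^D Y^{D,g_D}_{t,s}(i))^2]\to0$ as $M\to\infty$. As separating family I take $\nu\mapsto\langle\nu,\phi\rangle$ for $\phi\in C^2([0,1])$; evaluated along $\mu^D$ these equal $\sum_{i=1}^D\Phi(Y^{D,g_D}_{t,s}(i))$ with $\Phi(x)\defeq x\phi(x)\in C^2([0,1])$ and $\Phi(0)=0$. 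By It{\^o}'s formula $\sum_{i=1}^D\Phi(Y^{D,g_D}_{t,s}(i))$ splits into a drift term whose integrand is (using $\abs{\hht_D(x)}\le L_h x$ and $\sigma^2(x)\le L_\sigma x$, valid because $\hht_D(0)=\sigma^2(0)=0$, and boundedness of $\Phi'$, $\Phi''$) dominated by $C(\sup_{x\in[0,1]}\abs{g_D(u,x)}+\sum_{i=1}^D Y^{D,g_D}_{u,s}(i))$, and a martingale with quadratic variation dominated by $C\int_s^t\sum_{i=1}^D Y^{D,g_D}_{u,s}(i)\dd u$. Using the uniform first-moment bound $\sup_{D\in\N}\sup_{u\in[0,T]}\E[\sum_{i=1}^D Y^{D,g_D}_{u,s}(i)]<\infty$ (which follows from Lemma~\ref{l:YD_L1_distance} by comparison with $g\equiv0$) and the uniform square-integrability \eqref{eq:g_D_squareint} (which, via Cauchy--Schwarz, makes $\int_a^{a+\delta}\sup_{x\in[0,1]}\abs{g_D(u,x)}\dd u$ small uniformly in $D$ and $a\in[0,T]$ as $\delta\to0$), Aldous's condition for the real-valued processes $(\langle\mu^D_t,\phi\rangle)_{t\in[s,\infty)}$ is verified, so these are tight in $D([s,\infty),\R)$ and Jakubowski's criterion gives tightness of $\{\mu^D\}$.

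Finally, $\mu^\infty$ has continuous paths because each excursion enters and leaves $0$ continuously, excursions of height exceeding a fixed $\delta>0$ are locally finite under $\Pi$, and the aggregate contribution of the remaining excursions is negligible by Lemma~\ref{l:finite_excursion_area}; hence the finite-dimensional convergence together with the tightness yields \eqref{eq:vanishing_immigration_weak_process}. I expect the tightness to be the main obstacle, specifically controlling the increments of $\langle\mu^D_t,\phi\rangle$ uniformly in $D$ when the immigration coefficient $\tfrac1D g_D$ is only square-integrable in time, so that the estimate must be routed through \eqref{eq:g_D_squareint} rather than through a crude sup-bound; verifying continuity of $\mu^\infty$ (needed to upgrade finite-dimensional convergence to Skorokhod convergence) is a secondary subtlety, whereas the reduction of the finite-dimensional distributions to Lemma~\ref{l:vanishing_immigration_general} via the telescoping decomposition is routine once the class $\MCE_{s,T}$ of test functionals has been identified.
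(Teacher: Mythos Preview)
Your proposal is correct and follows essentially the same route as the paper: tightness via Aldous' criterion for the real-valued processes $\langle\mu^D,\phi\rangle$ combined with a measure-valued tightness criterion (the paper cites Roelly--Coppoletta's Theorem~2.1, which is the $\MCM_\textup{f}$-specialization of the Jakubowski criterion you invoke), and identification of finite-dimensional distributions via the i.i.d.\ Laplace functional $(1-a_D)^D$, the same telescoping decomposition \eqref{eq:vig_telescopic}, and Lemma~\ref{l:vanishing_immigration_general}. The paper does not separately verify continuity of $\mu^\infty$---relative compactness plus convergence of finite-dimensional distributions at all times already pins down a unique limit law on Skorokhod space---so your additional continuity check, while correct, is not needed.
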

\begin{proof}
	Fix $\varphi\in C^2([0,1],\R)$ for the rest of this paragraph. We define the
	function $\phi \colon [0,1] \to \R$ by $[0,1] \ni x \mapsto \phi(x) \defeq
	x\varphi(x)$ and for all $D\in\N$
	and all $t\in[s,\infty)$ we define $S^D_{t,s} \defeq \sum_{i=1}^D
	\phi(Y_{t,s}^{D,g_D}(i))$.
	The fact that there exists a constant $L_\phi \in
	[0,\infty)$ such that for
	all $x\in[0,1]$ it holds that $\abs{\phi(x)} \leq L_\phi x$ and Markov's
	inequality yield
	for all $D,K\in\N$ and all $t\in[s,\infty)$ that
	\begin{equation}
		\Prop*{\abs{S^D_{t,s}} \geq K}
		\leq \frac{1}{K}\Exp*{\sum_{i=1}^D
		\abs[\big]{\phi\bigl(Y_{t,s}^{D,g_D}(i)\bigr)}}
		\leq \frac{L_\phi}{K}\Exp*{\sum_{i=1}^D
		Y_{t,s}^{D,g_D}(i)}
		= \frac{L_\phi}{K} D
		\Exp*{Y_{t,s}^{D,g_D}(1)}.
		\label{eq:vig_marginals_estimate}
	\end{equation}
	This, Lemma~\ref{l:YD_L1_distance}, and Setting~\ref{set:g_D} imply for all
	$t\in[s,\infty)$ that
	\begin{equation}
		\lim_{K\to\infty}\sup_{D\in\N}\Prop*{\abs{S^D_{t,s}} \geq K} \leq
		\lim_{K\to\infty}\frac{L_\phi}{K}e^{L_h(t-s)}\sup_{D\in\N} \int_s^t
		\sup_{x\in [0,1]}\abs{g_D(u,x)}\dd u =
		0.
		\label{eq:vig_marginals}
	\end{equation}
	For every $T\in(s,\infty)$ and every $D\in\N$ let $\MCS^D_T$ be the set of all stopping
	times with respect to the natural filtration of $S^D_{\cdot, s}$ that are
	bounded by $T$. For all $y \in [0,1]$,
	all $D\in \N$, and
	all $t\in[s,\infty)$ we define
	\begin{equation}
		(G^D_t \phi)(y) \defeq \left( \tfrac{1}{D}g_D(t,y) + \hht_D(y)
		\right)\phi'(y) + \tfrac{1}{2}\sigma^2(y)\phi''(y).
	\end{equation}
	Setting~\ref{set:coefficients} and
	$\phi\in C^2([0,1],\R)$ imply that there
	exists a constant $C_\phi \in
	[1,\infty)$ such that it holds for all $y\in [0,1]$, all $D\in\N$, and all $t\in[s,\infty)$
	that $\abs{(G^D_t\phi)(y)} \leq
	C_\phi(\frac{1}{D}\sup_{x\in [0,1]}\abs{g_D(t,x)}+y)$ and $\sigma^2(y)\phi'^2(y) \leq
	C_\phi^2 y$,
	Jensen's inequality ensures for all $x_1,x_2,x_3 \in
	\R$ that $(\sum_{i=1}^3 x_i)^2 \leq 3\sum_{i=1}^3 x_i^2$,
	Hölder's inequality yields for every $\delta\in[0,\infty)$ and every integrable
	function $\alpha\colon
	[0,\delta] \to \R$ that $(\int_0^\delta \alpha(u)\dd u)^2
	\leq \delta \int_0^\delta (\alpha(u))^2 \dd u$, and it holds for all $x\in\R$
	that $x \leq 1 +
	x^2$. It\^{o}'s formula, the It\^{o} isometry, and the preceding estimates
	show for all $T \in (s,\infty)$, all $\deltab \in [0,1]$, all $D\in\N$, all
	$\tau\in\MCS^D_T$, and all $\delta\in[0,\deltab]$ that
	\begin{equation}
		\begin{split}
			\Exp*{\left(S^D_{\tau+\delta,s} - S^D_{\tau,s}\right)^2} &= \Exp*{\left(
			\mathop{\textstyle\sum}\limits_{i=1}^D\smallint_{\tau}^{\tau+\delta} (G^D_u
			\phi)\bigl(Y^{D,g_D}_{u,s}(i)\bigr)\dd u + \mathop{\textstyle\sum}\limits_{i=1}^D
			\smallint_{\tau}^{\tau+\delta}(\sqrt{\sigma^2}\phi')\bigl(Y^{D,g_D}_{u,s}(i)\bigr)\dd
			W_u(i)
			\right)^2}\\
			&\leq 3C_\phi^2\Exp*{\biggl( \smallint_{\tau}^{\tau+\delta} \sup_{x\in
			[0,1]}\abs{g_D(u,x)} \dd u
			\biggr)^2} + 3C_\phi^2\Exp*{\left(
				\mathop{\textstyle\sum}\limits_{i=1}^D \smallint_{0}^\delta
			Y^{D,g_D}_{\tau+u,s}(i)\dd u\right)^2}\\
			&\quad+3\Exp*{\mathop{\textstyle\sum}\limits_{i=1}^D\smallint_0^\delta
			\sigma^2\bigl(Y^{D,g_D}_{\tau+u,s}(i)\bigr)\phi'^2\bigl(Y^{D,g_D}_{\tau+u,s}(i)\bigr)\dd u}\\
			&\leq3C_\phi^2\delta\Exp*{\smallint_{\tau}^{\tau+\delta} \sup_{x\in
			[0,1]}\abs{g_D(u,x)}^2 \dd
			u}
			+3C_\phi^2\delta\Exp*{\smallint_{0}^\delta \left(
			\mathop{\textstyle\sum}\limits_{i=1}^D
			Y^{D,g_D}_{\tau+u,s}(i) \right)^2 \dd u}\\
			&\quad +3C_\phi^2\Exp*{\smallint_0^\delta \mathop{\textstyle\sum}\limits_{i=1}^D
			Y^{D,g_D}_{\tau+u,s}(i) \dd u}\\
			&\leq3C_\phi^2\deltab\smallint_{s}^{T+1} \sup_{x\in
			[0,1]}\abs{g_D(u,x)}^2 \dd
			u + 6C_\phi^2\deltab\Exp*{\sup_{u\in[s,T+1]} \biggl(
			\mathop{\textstyle\sum}\limits_{i=1}^D
			Y^{D,g_D}_{u,s}(i) \biggr)^2}
			+3C_\phi^2\deltab.
		\end{split}
	\end{equation}
	This, Lemma~\ref{l:YD_second_moment}, and Setting~\ref{set:g_D} imply for
	all $T\in(s,\infty)$ that
	\begin{equation}
			\lim_{\deltab\to 0}\sup_{D\in\N}\sup_{\tau \in
			\MCS^D_T}\sup_{\delta\in[0,\deltab]}\Exp*{\left(S^D_{\tau+\delta,s} -
			S^D_{\tau,s}\right)^2} = 0.
		\label{eq:vig_aldous}
	\end{equation}
	By Aldous' tightness criterion (see, e.g., Theorem~3.8.6
	in~\textcite{EthierKurtz1986}),
	\eqref{eq:vig_marginals} and~\eqref{eq:vig_aldous} ensure that
	\begin{equation}
		\left\{ \left( \sum_{i=1}^D Y_{t,s}^{D,g_D}(i)\varphi\bigl(Y_{t,s}^{D,g_D}(i)\bigr)
		\right)_{t\in[s,\infty)} \colon D\in\N\right\}
		\label{eq:vig_roelly}
	\end{equation}
	is relatively compact.
	Since $\varphi\in
	C^2([0,1],\R)$ was
	arbitrary, it follows from~\eqref{eq:vig_roelly},
	from Theorem~2.1 in~\textcite{RoellyCoppoletta1986}, and from
	Prohorov's theorem (e.g.\ Theorem~3.2.2 in~\textcite{EthierKurtz1986}) that
	\begin{equation}
		\left\{ \left( \sum_{i=1}^D Y_{t,s}^{D,g_D}(i) \delta_{Y_{t,s}^{D,g_D}(i)}
		\right)_{t\in[s,\infty)} \colon D\in\N \right\}
		\label{eq:vig_relcomptoshow}
	\end{equation}
	is relatively compact.

	In the following, we identify the limit points
	of~\eqref{eq:vig_relcomptoshow} by showing convergence of
	finite-dimensional distributions. For that, fix $n\in\N$, fix
	$\varphi_1,\dotsc,\varphi_n\in \Lip([0,1],[0,\infty))$, and fix
	$t_1,\dotsc,t_n\in[s,\infty)$ with $t_1 \leq \dotsb \leq t_n$. For every
	$j\in[n]$ we define the function $\phi_j\colon [0,1]\to[0,\infty)$ by
	$[0,1]\ni x \mapsto \phi_j(x) \defeq x\varphi_j(x)$.
	The fact that $Y^{D,g_D}_{\cdot, s}(i)$,
	$i\in[D]$, are i.i.d.~for all $D\in\N$ yields
	for all $D\in\N$ that
	\begin{equation}
		\begin{split}
			\Exp[\bigg]{\exp\biggl( -\sum_{j=1}^n\sum_{i=1}^D
			\phi_j\bigl(Y^{D,g_D}_{t_j,s}(i)\bigr) \biggr)}
			&= \prod_{i=1}^D \Exp[\bigg]{\exp\biggl( -\sum_{j=1}^n
			\phi_j\bigl(Y^{D,g_D}_{t_j,s}(i)\bigr) \biggr)}\\
			&= \left( 1-\frac{D\Exp*{1-\exp\left( -\sum_{j=1}^n
			\phi_j\bigl(Y^{D,g_D}_{t_j,s}(1)\bigr) \right)}}{D}
			\right)^D.
		\end{split}
		\label{eq:expS}
	\end{equation}
	For all $x_1,\dotsc,x_n\in[0,1]$ it holds that
	\begin{equation}
		1-\exp\biggl( -\sum_{j=1}^n \phi_j(x_j) \biggr) =
		\sum_{j=1}^n\bigl( 1-\exp( -\phi_j(x_j) )
		\bigr)\exp\biggl( -\sum_{i=1}^{j-1}\phi_i(x_i) \biggr).
		\label{eq:vig_telescopic}
	\end{equation}
	This shows that~\eqref{eq:expS}
	involves the expectation of a sum. Each summand has the form of a
	functional $F\in \MCE_{s,t_n}$ with $F(0) = 0$. On compact
	subintervals of $[0,\infty)$, the sequence of functions
	$x \mapsto (1-\frac{x}{D})^D$, $D\in\N$, converges uniformly to the function
	$x \mapsto e^{-x}$ as $D\to\infty$.
	This and Lemma~\ref{l:vanishing_immigration_general} applied to each summand of the
	sum obtained from~\eqref{eq:expS} and~\eqref{eq:vig_telescopic} show that
	\begin{equation*}
		\begin{split}
			\lim_{D\to\infty}\Exp[\bigg]{\exp\biggl( -\sum_{j=1}^n\sum_{i=1}^D
			\phi_j\bigl(Y^{D,g_D}_{t_j,s}(i)\bigr) \biggr)}
			&= \exp\biggl(
			-\lim_{D\to\infty}D\Exp[\bigg]{1-\exp\biggl(
			-\sum_{j=1}^n\phi_j\bigl(Y^{D,g_D}_{t_j,s}(1)\bigr)
			\biggr)} \biggr)\\
			&=\exp\biggl( -\int\int_{s}^\infty \biggl( 1-\exp\biggl(
			-\sum_{j=1}^n \phi_j(\eta_{t_j-u})
			\biggr) \biggr) g(u,0) \dd u\di Q(d\eta) \biggr)\\
			&= \Exp[\bigg]{\exp\biggl( -\sum_{j=1}^n \int \phi_j(\eta_{t_j-u})
			\di \Pi(du \otimes d\eta) \biggr)}.
		\end{split}
	\end{equation*}
	This implies the convergence of finite-dimensional distributions
	of~\eqref{eq:vig_relcomptoshow} and completes the proof of
	Lemma~\ref{l:vanishing_immigration_weak_process}.
\end{proof}

\subsection{Convergence of the loop-free processes}
\label{ss:convergence_loop_free}
In this subsection, we show convergence of the loop-free processes using
Lemma~\ref{l:vanishing_immigration_weak_process}. For
that, we make the following assumption, which implies that the initial
population has migration level zero.
\begin{setting}[Initial distribution]\label{set:initial_dist_level0}
	Assume that
	Setting~\ref{set:initial_moment}
	and Setting~\ref{set:migration_levels} hold, that
	\begin{equation}
		\Exp*{\left(\sum_{i=1}^\infty X_0(i)\right)^2}<\infty,
		\label{eq:initial_second_moment}
	\end{equation}
	and that it holds for all $D\in\N$ and all $i\in[D]$ that $\MCL(X_0^{D,0}(i)) =
	\MCL(X_0(i))$ and for all $D\in\N$ and all $(i,k)\in[D]\times \N$ that
	$\MCL(X_0^{D,k}(i)) = \delta_0$.
\end{setting}
The following lemma establishes the convergence of the loop-free processes and
is analogous to Lemma~4.22 in~\textcite{Hutzenthaler2012}. Recall that by
Setting~\ref{set:migration_levels}, which is satisfied by assumption in
the following lemma, the loop-free processes
fulfill for all $D\in\N$ and all $(i,k) \in [D]\times\N_0$ that a.s.~$Z_0^{D,k}(i) =
X_0^{D,k}(i)$.
\begin{lemma}[Convergence of the loop-free processes]
	\label{l:convergence_of_the_loop_free_process}
	Assume that \textup{Setting~\ref{set:initial_dist_level0}} holds and let
	$\MCT$ be the forest of excursions constructed in
	\textup{Subsection~\ref{ss:tree_excursions}}.
	Then it holds that
	\begin{equation}
		\left(\sum_{i=1}^D\sum_{k\in\N_0} Z_t^{D,k}(i) \delta_{Z_t^{D,k}(i)}
		\right)_{t\in[0,\infty)} \xRightarrow{D\to\infty} \left(\int
		\eta_{t-s}\delta_{\eta_{t-s}}
		\di\MCT(ds \otimes d\eta)\right)_{t\in[0,\infty)}
		\label{eq:convergence_of_the_loop_free_process}
	\end{equation}
	in the sense of convergence in distribution on
	$D([0,\infty),\MCM_\textup{f}([0,1]))$.
\end{lemma}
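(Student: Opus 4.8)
The plan is to prove the lemma by induction on the number $K\in\N_0$ of migration levels: for every $K\in\N_0$ we aim to show that
\begin{equation*}
  \Bigl(\bigl({\textstyle\sum_{i=1}^D}\, Z_t^{D,k}(i)\,\delta_{Z_t^{D,k}(i)}\bigr)_{t\in[0,\infty)}\Bigr)_{k=0}^{K}
  \xRightarrow{D\to\infty}
  \Bigl(\bigl(\int \eta_{t-s}\,\delta_{\eta_{t-s}}\di\MCT^{(k)}(ds\otimes d\eta)\bigr)_{t\in[0,\infty)}\Bigr)_{k=0}^{K}
\end{equation*}
in the product space $\bigl(D([0,\infty),\MCM_\textup{f}([0,1]))\bigr)^{K+1}$, tracking the convergence jointly with the driving noise of the levels $0,\dotsc,K$. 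Summing the components over $k\in\{0,\dotsc,K\}$ (a continuous operation) and then letting $K\to\infty$ will give the lemma. The structural fact that makes the induction work is that, conditionally on the migration levels $0,\dotsc,K-1$, the level-$K$ processes $Z^{D,K}(1),\dotsc,Z^{D,K}(D)$ are \emph{independent} solutions of the one-dimensional SDE~\eqref{eq:SDE_YD} (with $s=0$, and with initial value $0$ once $K\geq1$ by Setting~\ref{set:initial_dist_level0}) driven by the time-dependent immigration coefficient $g^{D,K}(t,y)\defeq\smallsum_{j=1}^D Z_t^{D,K-1}(j)f(Z_t^{D,K-1}(j),y)$, which is measurable with respect to the lower levels; hence the engine of the proof is the functional Poisson limit theorem Lemma~\ref{l:vanishing_immigration_weak_process}.

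For the base case $K=0$, the processes $Z^{D,0}(i)$, $i\in[D]$, are independent solutions of $dZ=h_D(Z)\dd t+\sqrt{\sigma^2(Z)}\dd W^0(i)$ with $Z_0^{D,0}(i)=X_0(i)$; as $h_D=\hht_D+h_D(0)$ this is~\eqref{eq:SDE_YD} with $g\equiv Dh_D(0)$. I would split $\smallsum_{i=1}^D$ into the founder part $i\in\{1,\dotsc,N\}$ and the rest. For fixed $N$, the founder part converges to $(\smallsum_{i=1}^N Y_t(i)\,\delta_{Y_t(i)})_{t}$ by standard stability of SDEs, using that $h_D\to h$ uniformly (a consequence of the pointwise convergence and the uniform Lipschitz bound in Setting~\ref{set:coefficients}) and pathwise uniqueness of~\eqref{eq:SDE_Y}. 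For the rest, Lemma~\ref{l:YD_L1_distance} — comparing $Z^{D,0}(i)$ with the solution started at $0$ along the same Brownian motion and carrying the same immigration $Dh_D(0)$ — shows that replacing the initial values $X_0(i)$ by $0$ perturbs the empirical measure by at most $e^{L_hT}\smallsum_{i>N}X_0(i)$ in expectation on $[0,T]$, which is uniformly small in $D$ as $N\to\infty$ by~\eqref{eq:initial_second_moment}; the resulting processes started at $0$ fall, for fixed $N$, under Lemma~\ref{l:vanishing_immigration_weak_process} with $g_D\equiv Dh_D(0)$ and limiting $g\equiv\mu$ (using $\lim_{D\to\infty}Dh_D(0)=\mu$), so the rest converges to $(\int\eta_{t-u}\,\delta_{\eta_{t-u}}\di\Pi^\emptyset(du\otimes d\eta))_{t}$ with $\Pi^\emptyset$ as in~\eqref{eq:intPi0}. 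As founders and immigrants are driven by disjoint families of Brownian motions, the two limits are independent; letting $N\to\infty$ — using $\Exp*{\sup_{t\in[0,T]}\smallsum_{i>N}Y_t(i)}\to0$, which follows from first- and second-moment estimates for~\eqref{eq:SDE_Y} analogous to those in Subsection~\ref{ss:preliminary_results} — and invoking a standard approximation argument for convergence in distribution yields the case $K=0$, since $\smallsum_{i=1}^\infty\delta_{(0,Y(i))}+\Pi^\emptyset=\MCT^{(0)}$.

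For the induction step from $K$ to $K+1$, I would condition on the levels $0,\dotsc,K$ (by the induction hypothesis, after passing to a common probability space, we may assume their empirical measures converge almost surely). Given these levels the immigration coefficient $g^{D,K+1}(t,y)=\smallsum_{j=1}^D Z_t^{D,K}(j)f(Z_t^{D,K}(j),y)$ is a fixed measurable function, and — since $f$ is bounded and jointly continuous — the a.s.\ convergence of the level-$K$ empirical measures gives $\int_0^T\sup_{y\in[0,1]}\abs{g^{D,K+1}(u,y)-g^{\infty,K+1}(u,y)}\dd u\to0$ a.s., where $g^{\infty,K+1}(u,y)\defeq\int\eta_{u-s}f(\eta_{u-s},y)\di\MCT^{(K)}(ds\otimes d\eta)$; in particular $g^{\infty,K+1}(u,0)=\int\at(\eta_{u-s})\di\MCT^{(K)}(ds\otimes d\eta)$. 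The coefficient $g^{D,K+1}$ satisfies the sign and Lipschitz requirements of Setting~\ref{set:g_D} — the bound $\tfrac1D g^{D,K+1}(u,1)+\hht_D(1)\leq-h_D(0)\leq0$ follows from $yf(y,1)+h_D(1)\leq0$ and $Dh_D(0)\geq0$ — but in general not the uniform-in-$D$ square-integrability~\eqref{eq:g_D_squareint}. To remedy this I would localize at the stopping time $\sigma_M^D\defeq\inf\{t\in[0,\infty)\colon\smallsum_{i=1}^D\smallsum_{m=0}^{K}Z_t^{D,m}(i)\geq M\}$, which depends only on the levels $0,\dotsc,K$, replace $g^{D,K+1}(u,y)$ by $g^{D,K+1}(u,y)\1_{\{u<\sigma_M^D\}}$ (for which Setting~\ref{set:g_D} holds uniformly in $D$ for each fixed $M$), and note that by pathwise uniqueness the associated level-$(K+1)$ processes coincide with $Z^{D,K+1}(i)$ on $[0,\sigma_M^D]$ while $\sup_{D\in\N}\Prop*{\sigma_M^D\leq T}\to0$ as $M\to\infty$ by Lemma~\ref{l:tau_theta}. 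Applying Lemma~\ref{l:vanishing_immigration_weak_process} conditionally on the levels $0,\dotsc,K$ and letting $M\to\infty$ then shows that $\smallsum_{i=1}^D Z_t^{D,K+1}(i)\,\delta_{Z_t^{D,K+1}(i)}$ converges, conditionally on $\MCT^{(K)}$, to $\int\eta_{t-u}\,\delta_{\eta_{t-u}}\di\Pi(du\otimes d\eta)$ with $\Pi$ a Poisson point process of conditional intensity $g^{\infty,K+1}(u,0)\dd u\otimes Q(d\eta)$; by~\eqref{eq:intPin} this is exactly the conditional law given $\MCT^{(K)}$ of $\MCT^{(K+1)}=\int\Pi^{(K,s,\chi)}\di\MCT^{(K)}(ds\otimes d\chi)$, and since the level-$(K+1)$ Brownian motions are independent of the lower levels, this convergence is compatible with the induction hypothesis. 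Appending this $(K+1)$-th component to the level-$(\leq K)$ vector completes the step.

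It remains to let $K\to\infty$ in the summed induction claim. On the limit side, $\smallsum_{n=0}^{K}\MCT^{(n)}\uparrow\MCT$ and the total masses converge by the finite-expectation property of the total mass of $\MCT$ recalled in Subsection~\ref{ss:tree_excursions}, so $(\int\eta_{t-s}\,\delta_{\eta_{t-s}}\di(\smallsum_{n=0}^{K}\MCT^{(n)})(ds\otimes d\eta))_{t}$ converges, uniformly on compact time intervals, to $(\int\eta_{t-s}\,\delta_{\eta_{t-s}}\di\MCT(ds\otimes d\eta))_{t}$. On the prelimit side, Lemma~\ref{l:essentially_finitely_many_generations} applies — its hypothesis~\eqref{eq:essentially_finitely_many_generations_ass} holds since only level $0$ carries initial mass, so $\smallsum_{k\in\N_0}\sup_{D\in\N}\Exp*{\smallsum_{i=1}^D X_0^{D,k}(i)}=\Exp*{\smallsum_{i=1}^\infty X_0(i)}<\infty$ by~\eqref{eq:initial_second_moment} — and, together with the maximal moment estimates of Subsection~\ref{ss:preliminary_results}, yields $\lim_{K\to\infty}\sup_{D\in\N}\Prop*{\sup_{t\in[0,T]}\smallsum_{i=1}^D\smallsum_{k>K}Z_t^{D,k}(i)>\eta}=0$ for all $T\in[0,\infty)$ and all $\eta\in(0,\infty)$. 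The same approximation argument then upgrades the induction claim to~\eqref{eq:convergence_of_the_loop_free_process}. The step I expect to be the main obstacle is the induction step: making rigorous that Lemma~\ref{l:vanishing_immigration_weak_process} may be applied conditionally on the lower migration levels, i.e.\ simultaneously localizing the total mass, conditioning, and reconciling the resulting conditional Poisson limit with the induction hypothesis.
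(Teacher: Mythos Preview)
Your proposal follows the same route as the paper: induction on migration levels with Lemma~\ref{l:vanishing_immigration_weak_process} as the engine, Skorokhod representation to pass to a.s.\ convergence of the lower levels before conditioning, and Lemma~\ref{l:essentially_finitely_many_generations} to control the tail in $K$. Two organizational differences are worth noting. First, the paper separates relative compactness of the full sum (via Aldous' criterion, using Lemmas~\ref{l:X_Z_first_moment} and~\ref{l:X_Z_second_moment} and Theorem~2.1 in~\textcite{RoellyCoppoletta1986}) from identification of limit points, and runs the induction only at the level of finite-dimensional Laplace functionals; this means the $K\to\infty$ step needs only the fixed-time bound $\sup_{D}\sup_{t\in[0,T]}\Exp{\sum_{i}\sum_{k>K}Z^{D,k}_t(i)}\to0$ from Lemma~\ref{l:essentially_finitely_many_generations}, rather than the $\sup_t$-inside-the-probability control you invoke (which is obtainable, but needs an additional martingale/BDG argument on the tail). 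Second, for the square-integrability hypothesis~\eqref{eq:g_D_squareint} in the induction step the paper does not localize: after Skorokhod, the a.s.\ convergence of $\bigl(\sum_j Z^{D,K}_u(j)\bigr)_{u\geq0}$ to a \cadlag\ limit forces the sequence to be a.s.\ bounded on compacts uniformly in $D$, so~\eqref{eq:g_D_squareint} holds a.s.\ and Lemma~\ref{l:vanishing_immigration_weak_process} applies conditionally without truncation. Your stopping-time localization is a valid alternative, just an extra layer.
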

\begin{proof}
	Fix $\varphi\in C^2([0,1],\R)$ for the rest of this
	paragraph. We define the function $\phi\colon[0,1]\to\R$ by $[0,1] \ni x
	\mapsto \phi(x) \defeq x\varphi(x)$ and for all $D\in\N$ and all
	$t\in[0,\infty)$ we define $S^D_{t} \defeq
	\sum_{i=1}^D\sum_{k\in\N_0} \phi(Z_{t}^{D,k}(i))$.
	The fact that there exists a constant $L_\phi \in
	[0,\infty)$ such that for all $x\in[0,1]$ it holds that $\abs{\phi(x)} \leq L_\phi
	x$ and Markov's inequality yield for all $D,K\in\N$ and all $t\in[0,\infty)$
	that
	\begin{equation}
		\Prop*{\abs{S^D_{t}} \geq K} \leq
		\frac{1}{K}\Exp*{\sum_{i=1}^D\sum_{k\in\N_0}
		\abs[\big]{\phi\bigl(Z_t^{D,k}(i)\bigr)}} \leq
		\frac{L_\phi}{K}\Exp*{\sum_{i=1}^D\sum_{k\in\N_0}
		Z_{t}^{D,k}(i)}.
		\label{eq:clf_marginals_estimate}
	\end{equation}
	This, Lemma~\ref{l:X_Z_first_moment}, and
	Setting~\ref{set:initial_dist_level0} imply for all $t\in[0,\infty)$ that
	\begin{equation}
		\lim_{K\to\infty} \sup_{D\in\N} \Prop*{\abs{S^D_{t}} \geq K} \leq
		\lim_{K\to\infty} \sup_{D\in\N}\frac{L_\phi}{K}\Exp*{\sum_{i=1}^D\sum_{k\in\N_0}
		Z_{t}^{D,k}(i)} = 0.
		\label{eq:clf_marginals}
	\end{equation}
	For every $T\in(0,\infty)$ and every $D\in\N$ let $\MCS^D_T$ be the set of stopping times
	with respect to the natural filtration of $S^D$ that are bounded by $T$.
	For all $D\in\N$ and all $x = (x_{i,k})_{(i,k)\in[D]\times\N_0}\in
	[0,1]^{[D]\times\N_0}$ we define $\psi^D(x) \defeq
	\sum_{i=1}^D\sum_{k\in\N_0} \phi(x_{i,k})$ and
	\begin{equation}
		\begin{split}
			(G^D \psi^D)(x) \defeq{} & \sum_{i=1}^D\sum_{k\in\N_0}\left(
			\frac{\1_{k > 0}}{D}\sum_{j=1}^D x_{j,\abs{k-1}}f(x_{j,\abs{k-1}},x_{i,k}) +
			\hht_D(x_{i,k}) + \1_{k=0}
			h_D(0) 
			\right)\phi'(x_{i,k})\\
			&+\frac{1}{2}\sum_{i=1}^D\sum_{k\in\N_0}\sigma^2(x_{i,k})\phi''(x_{i,k}).
		\end{split}
	\end{equation}
	Setting~\ref{set:coefficients} and $\phi\in C^2([0,1],\R)$
	imply that there exists a constant $C_\psi\in[1,\infty)$ such
	that it holds for all $x = (x_{i,k})_{(i,k)\in[D]\times\N_0} \in
	[0,1]^{[D]\times \N_0}$, all $y \in [0,1]$, and all $D\in\N$ that
	$\abs{(G^D\psi^D)(x)} \leq C_\psi(2\mu + \sum_{i=1}^D\sum_{k\in\N_0}
	x_{i,k})$ and $\sigma^2(y)\phi'^2(y) \leq
	C_\psi^2 y$,
	Jensen's inequality ensures for all $x_1,x_2 \in
	\R$ that $(x_1 + x_2)^2 \leq 2 (x_1^2 + x_2^2)$, Hölder's inequality yields
	for every $\delta\in[0,\infty)$ and every integrable function $\alpha\colon
	[0,\delta] \to \R$ that $(\int_0^\delta \alpha(u)\dd u)^2
	\leq \delta \int_0^\delta (\alpha(u))^2 \dd u$, and it holds for
	all $x\in\R$ that $2x \leq 1 + x^2$.
	It\^{o}'s formula, the It\^{o} isometry, and the preceding estimates show
	for all $T\in(0,\infty)$, all $\deltab
	\in [0,1]$, all $D\in\N$, all $\tau\in\MCS^D_T$, and all $\delta \in
	[0,\deltab]$ that
	\begin{equation*}
		\begin{split}
			\Exp*{\left(S^D_{\tau+\delta} - S^D_{\tau}\right)^2} &= \Exp*{\biggl(
			\smallint_{\tau}^{\tau+\delta} (G^D
			\psi)\bigl(Z^{D,\cdot}_{u}(\cdot)\bigr)\dd u +
			\mathop{\textstyle\sum}\limits_{i=1}^D\mathop{\textstyle\sum}\limits_{k\in\N_0}
			\smallint_{\tau}^{\tau+\delta}(\sqrt{\sigma^2}\phi')\bigl(Z^{D,k}_{u}(i)\bigr)\dd
			W^k_u(i)
			\biggr)^2}\\
			&\leq 2C_\psi^2\Exp*{\biggl( \smallint_{0}^\delta
			2\mu +
	\mathop{\textstyle\sum}\limits_{i=1}^D\mathop{\textstyle\sum}\limits_{k\in\N_0}
	Z^{D,k}_{\tau+u}(i)\dd u\biggr)^2}
	+2\Exp*{\mathop{\textstyle\sum}\limits_{i=1}^D\mathop{\textstyle\sum}\limits_{k\in\N_0}\smallint_0^\delta
	(\sqrt{\sigma^2}\phi')^2\bigl(Z^{D,k}_{\tau+u}(i)\bigr)\dd u}\\
			&\leq 2C_\psi^2\delta\Exp*{ \smallint_{0}^\delta
			\biggl(2\mu +
		\mathop{\textstyle\sum}\limits_{i=1}^D\mathop{\textstyle\sum}\limits_{k\in\N_0}
Z^{D,k}_{\tau+u}(i)\biggr)^2\dd u}
		+2C_\psi^2\Exp*{\smallint_0^\delta\mathop{\textstyle\sum}\limits_{i=1}^D\mathop{\textstyle\sum}\limits_{k\in\N_0}
			Z^{D,k}_{\tau+u}(i)\dd u}\\
			&\leq 3C_\psi^2\deltab\Exp*{\sup_{t\in[0,T+1]} \biggl( 2\mu +
					\mathop{\textstyle\sum}\limits_{i=1}^D\mathop{\textstyle\sum}\limits_{k\in\N_0}
		Z^{D,k}_{t}(i) \biggr)^2} + C_\psi^2\deltab.
		\end{split}
	\end{equation*}
	This, Lemma~\ref{l:X_Z_second_moment}, and
	Setting~\ref{set:initial_dist_level0} imply for all
	$T\in(0,\infty)$ that
	\begin{equation}
		\lim_{\deltab\to
		0}\sup_{D\in\N}\sup_{\tau\in\MCS^D_T}\sup_{\delta\in[0,\deltab]}\Exp*{\left(S^D_{\tau+\delta}
		- S^D_{\tau}\right)^2} = 0.
		\label{eq:clf_aldous}
	\end{equation}
	By Aldous' tightness criterion (see, e.g., Theorem~3.8.6
	in~\textcite{EthierKurtz1986}),
	\eqref{eq:clf_marginals} and~\eqref{eq:clf_aldous} ensure that
	\begin{equation}
		\left\{ \left( \sum_{i=1}^D\sum_{k\in\N_0} Z_{t}^{D,k}(i)\varphi\bigl(Z_{t}^{D,k}(i)\bigr)
		\right)_{t\in[0,\infty)} \colon D\in\N\right\}
		\label{eq:clf_roelly}
	\end{equation}
	is relatively compact.
	Since $\varphi\in
	C^2([0,1],\R)$ was
	arbitrary, it follows
	from~\eqref{eq:clf_roelly}, from Theorem~2.1
	in~\textcite{RoellyCoppoletta1986} and from Prohorov's theorem
	(e.g.\ Theorem~3.2.2 in~\textcite{EthierKurtz1986}) that
	\begin{equation}
		\left\{ \left( \sum_{i=1}^D\sum_{k\in\N_0} Z_{t}^{D,k}(i)\delta_{Z_{t}^{D,k}(i)}
		\right)_{t\in[0,\infty)} \colon D\in\N \right\}
		\label{eq:clf_relcomptoshow}
	\end{equation}
	is relatively compact.

	In the following, we identify the limit points
	of~\eqref{eq:clf_relcomptoshow} by showing convergence of finite-dimensional
	distributions.
	For that, fix $n\in\N$, fix
	$\varphi_1,\dotsc,\varphi_n\in \Lip([0,1],[0,\infty))$, and fix
	$t_1,\dotsc,t_n\in[0,\infty)$ with $t_1 \leq \dotsc \leq t_n$. For every
	$j\in[n]$ we define the function $\phi_j\colon[0,1]\to [0,\infty)$ by
	$[0,1]\ni x \mapsto \phi_j(x) \defeq x\varphi_j(x)$.
	Next we show that it holds for all $m\in\N_0$ that
	\begin{equation}
		\lim_{D\to\infty}\Exp[\bigg]{\exp\biggl( -\sum_{j=1}^n
		\sum_{i=1}^D\sum_{k=0}^m \phi_j\bigl(Z^{D,k}_{t_j}(i)\bigr) \biggr)} =
		\Exp[\bigg]{\exp\biggl( -\sum_{j=1}^n \sum_{k=0}^m \int
		\phi_j(\eta_{t_j-s})\di \MCT^{(k)}(ds \otimes d\eta) \biggr)}.
		\label{eq:clf_fdds}
	\end{equation}
	We prove~\eqref{eq:clf_fdds} by induction on $m\in\N_0$. For all $\eta \in
	C([0,\infty),[0,1])$ we define $F(\eta) \defeq \sum_{j=1}^n
	\phi_j(\eta_{t_j})$. 
	For every $D\in\N$ let
	$(Y^{D,Dh_D(0)}_{t,0}(i))_{t\in[0,\infty)}$, $i\in[D]$, be
	independent solutions of~\eqref{eq:SDE_YD} coupled only through their
	initial states satisfying for all
	$i\in[D]$ a.s.~that
	$Y^{D,Dh_D(0)}_{0,0}(i) = X_0(i)$ and
	let $(\Yb^{D,Dh_D(0)}_{t,0}(i))_{t\in[0,\infty)}$, $i\in[D]$, be
	independent solutions
	of~\eqref{eq:SDE_YD} satisfying for all $i\in[D]$ a.s.~that
	$\Yb^{D,Dh_D(0)}_{0,0}(i) = 0$.
	Then $Y^{D,Dh_D(0)}_{\cdot,0}$ is equal in distribution to $Z^{D,0}$. 
	Note that it holds for all $x,y,z \in [0,\infty)$ that $\abs{e^{-x} -
	e^{-(y+z)}} \leq \abs{e^{-x}(1-e^{-y})} + \abs*{e^{-y}(e^{-x} - e^{-z})}
	\leq 1-e^{-y} + \abs{x-z}$.  Moreover,
	there exists a constant $L_F\in[0,\infty)$ such that it holds for all
	$\eta,\etab \in C([0,\infty),[0,1])$ that $\abs{F(\eta) - F(\etab)} \leq
	L_F \sum_{j=1}^n \abs{\eta_{t_j} - \etab_{t_j}}$. These facts and
	Lemma~\ref{l:YD_L1_distance} imply that
	\begin{equation}
		\begin{split}
			\MoveEqLeft \lim_{K\to\infty}\lim_{D\to\infty}\abs[\bigg]{\E\biggl[
					\exp\biggl(
				-\sum_{i=K+1}^D F\bigl(Y_{\cdot,0}^{D,D
				h_D(0)}(i)\bigr)
			\biggr)\biggr] - \E\biggl[ \exp\biggl( -\sum_{i=1}^D
				F\bigl(\Yb_{\cdot,0}^{D,D h_D(0)}(i)\bigr)\biggr)\biggr]}\\
			&\leq
			\lim_{K\to\infty}\lim_{D\to\infty} \E\biggl[ 1-\exp\biggl( -\sum_{i=1}^K
				F\bigl(\Yb_{\cdot,0}^{D,D h_D(0)}(i)
			\bigr)\biggr)\biggr] + \lim_{K\to\infty}L_F ne^{L_h t_n}\sum_{i=K+1}^\infty \E[X_0(i)].
		\end{split}
		\label{eq:clf_initial}
	\end{equation}
	The second summand on the right-hand side of~\eqref{eq:clf_initial} is zero due to
	Setting~\ref{set:initial_dist_level0}. For every $i\in\N$
	the process $\Yb^{D,Dh_D(0)}_{\cdot, 0}(i)$ converges weakly to zero as
	$D\to\infty$, so the first summand on the right-hand side
	of~\eqref{eq:clf_initial} is also zero. On the other hand, for
	every $i\in\N$ the process $Y^{D, D
	h_D(0)}_{\cdot,0}(i)$ converges weakly to $Y(i)$ as
	$D\to\infty$ (see, e.g., Theorem~4.8.10 in~\textcite{EthierKurtz1986}). These observations and
	Lemma~\ref{l:vanishing_immigration_weak_process} with $s=0$,
	$(g_D)_{D\in\N} = (D
	h_D(0))_{D\in\N}$, and $g = \mu$ imply that
	\begin{equation}
		\begin{split}
			\MoveEqLeft\lim_{D\to\infty}\E\biggl[ \exp\biggl(
				-\sum_{i=1}^D
				F\bigl(Z^{D,0}(i)\bigr)\biggr) \biggr]\\
			&= \lim_{K\to\infty}\lim_{D\to\infty}\E\biggl[ \exp\biggl(
				-\sum_{i=1}^K F\bigl(Y_{\cdot,0}^{D,D
				h_D(0)}(i)\bigr)
			\biggr)\biggr]\E\biggl[ \exp\biggl( -\sum_{i=K+1}^D
				F\bigl(Y_{\cdot,0}^{D,D h_D(0)}(i)\bigr)
			\biggr)\biggr]\\
			&= \lim_{K\to\infty}\E\biggl[ \exp\biggl(
				-\sum_{i=1}^K F\bigl(Y(i)\bigr)
			\biggr)\biggr]\lim_{D\to\infty}\E\biggl[ \exp\biggl( -\sum_{i=1}^D
				F\bigl(\Yb_{\cdot,0}^{D,D h_D(0)}(i)\bigr)
			\biggr)\biggr]\\
			&= \E\biggl[ \exp\biggl(
				-\sum_{i=1}^\infty F\bigl(Y(i)\bigr)
			\biggr)\biggr]\E\biggl[ \exp\biggl( -\int
				F(\eta_{\cdot-s})
			 \di \Pi^\emptyset(ds \otimes d\eta)\biggr)\biggr]\\
			&= \E\biggl[ \exp\biggl( -\int
				F(\eta_{\cdot-s})
			\di \MCT^{(0)}(ds \otimes d\eta)\biggr)\biggr].
		\end{split}
	\end{equation}
	This establishes~\eqref{eq:clf_fdds} in the base case $m=0$.
	For the induction step $\N_0 \ni m \to m+1$ the induction hypothesis and
	relative compactness for all $\mt \in [m]_0$ of $\{(\sum_{i=1}^D\sum_{k=0}^\mt
	Z_t^{D,k}(i)\delta_{Z_t^{D,k}(i)})_{t\in[0,\infty)} \colon D\in\N\}$
	imply for all $\mt \in [m]_0$ and all $\varphi \in C([0,1],\R)$
	that
	\begin{equation}
		\left( \sum_{i=1}^D\sum_{k=0}^\mt Z_t^{D,k}(i)\varphi\bigl(Z_t^{D,k}(i)\bigr)
		\right)_{t\in [0,\infty)} \xRightarrow{D\to\infty} \left( \sum_{k=0}^\mt\int
		\eta_{t-s}\varphi(\eta_{t-s})\di \MCT^{(k)}(ds \otimes d\eta) \right)_{t\in
		[0,\infty)}.
		\label{eq:clf_weakconvergence}
	\end{equation}
	By the Skorokhod representation theorem (e.g.\ Theorem~3.1.8 in~\textcite{EthierKurtz1986}), we may
	assume almost sure convergence in~\eqref{eq:clf_weakconvergence}.
	Consequently, we may assume for all $\varphi \in C([0,1],\R)$ that
	\begin{equation}
		\left( \sum_{i=1}^D Z_t^{D,m}(i)\varphi\bigl(Z_t^{D,m}(i)\bigr)
		\right)_{t\in [0,\infty)} \xrightarrow[\textup{a.s.}]{D\to\infty} \left( \int
		\eta_{t-s}\varphi(\eta_{t-s})\di \MCT^{(m)}(ds \otimes d\eta) \right)_{t\in
		[0,\infty)}.
		\label{eq:clf_asconvergence}
	\end{equation}
	For every $D\in\N$ we define $g_D$ by
	$[0,\infty) \times [0,1] \ni (t,x) \mapsto g_D(t,x)\defeq \sum_{j=1}^D
	Z_t^{D,m}(j)f(Z_t^{D,m}(j),x)$ and we define $g$ by $[0,\infty) \times [0,1]
	\ni (t,x) \mapsto g(t,x) \defeq \int \eta_{t-s}f(\eta_{t-s},x)\di
	\MCT^{(m)}(ds \otimes d\eta)$.
	The sequence of functions $(g_D)_{D\in\N}$
	satisfies Setting~\ref{set:g_D} almost surely. Moreover, the function $g$
	satisfies almost surely for all $t\in[0,\infty)$ that $g(t,0) \geq 0$ and that $[0,1]\ni x
	\mapsto g(t,x)$ is continuous.
	Equation~\eqref{eq:clf_asconvergence} and the assumptions on $f$ imply almost surely for all
	$t\in[0,\infty)$ that $(g_D(t,\cdot))_{D\in\N}$ is an equicontinuous sequence and
	this together with~\eqref{eq:clf_asconvergence} yields almost surely for all
	$t\in[0,\infty)$ that
	\begin{equation}
		\lim_{D\to\infty} \sup_{x\in [0,1]}\abs{g_D(t,x) - g(t,x)} = 0.
		\label{eq:clf_gDtog}
	\end{equation}
	It follows almost surely for all $t\in[0,\infty)$
	from~\eqref{eq:clf_asconvergence} that
	$[0,t] \ni u \mapsto \int \eta_{u-s}\di \MCT^{(m)}(ds \otimes d\eta)$
	is \cadlag\ and therefore square-integrable and thus that $\sup_{D\in\N}
	\int_0^t ( \sum_{i=1}^D Z^{D,m}_u(i) )^2 \dd u < \infty$.
	This, the fact that for all $D\in\N$ and all $u\in[0,\infty)$ it holds that
	$\sup_{x\in[0,1]}\abs{g_D(u,x) - g(u,x)} \leq L_f( \sum_{i=1}^D
	Z^{D,m}_u(i) + \int \eta_{u-s}\di \MCT^{(m)}(ds\otimes d\eta))$, and
	Theorem~6.18 and Corollary~6.21 in~\textcite{Klenke2014} imply almost
	surely for all
	$t\in[0,\infty)$ that the family
	\begin{equation}
		\left\{[0,t] \ni u \mapsto \sup_{x\in[0,1]} \abs{g_D(u,x) - g(u,x)} \colon
		D\in\N \right\}
	\end{equation}
	is uniformly integrable. This, Theorem~6.25 in~\textcite{Klenke2014},
	and~\eqref{eq:clf_gDtog} show almost surely for all $t\in[0,\infty)$
	that
	\begin{equation}
		\lim_{D\to\infty} \int_0^t \sup_{x\in [0,1]}\abs{g_D(u,x) - g(u,x)} \dd
		u = 0.
	\end{equation}
	Conditionally on $(Z^{M,m})_{M\in\N}$, for every $D\in\N$ a version of $Z^{D,m+1}$
	is given by $Y^{D,g_D}_{\cdot,0}$
	satisfying for all $i\in[D]$ and all
	$t\in[0,\infty)$ that a.s.
	\begin{equation}
		Y_{t,0}^{D,g_D}(i) =
		\int_0^t\tfrac{1}{D}g_D\bigl(u,Y^{D,g_D}_{u,0}(i)\bigr)
		+\hht_D\bigl(Y_{u,0}^{D,g_D}(i)\bigr)\dd u
		+\int_0^t\sqrt{\sigma^2\bigl(Y_{u,0}^{D,g_D}(i)\bigr)}\dd
		W^{m+1}_u(i).
	\end{equation}
	Therefore, Lemma~\ref{l:vanishing_immigration_weak_process} yields that a.s.
	\begin{equation}
		\begin{split}
			\MoveEqLeft\lim_{D\to\infty}\Exp[\bigg]{ \exp\biggl(
				-\sum_{i=1}^D
				F\bigl(Z^{D,m+1}(i)\bigr)\biggr) \given
				(Z^{M,m})_{M\in\N},\MCT^{(m)}}
				\\
				&=\lim_{D\to\infty}\Exp[\bigg]{ \exp\biggl(
				-\sum_{i=1}^D
				F\bigl(Y_{\cdot,0}^{D,g_D}(i)\bigr)\biggr) \given
				(Z^{M,m})_{M\in\N},\MCT^{(m)}}\\
				&= \Exp[\bigg]{ \exp\biggl(
				-\int
				F(\eta_{\cdot-s})\di \Pi^{(m+1)}(ds \otimes d\eta)\biggr) \given
				\MCT^{(m)}},
		\end{split}
		\label{eq:clf_induction_step}
	\end{equation}
	where $\Pi^{(m+1)}$ conditioned on $\MCT^{(m)}$ is a Poisson
	point process on $[0,\infty)\times U$ with the property that for all bounded
	measurable $\Phi\colon
	[0,\infty)\times U \to [0,\infty)$ it holds almost surely that
	\begin{equation}
		\begin{split}
			\int \Phi(s,\eta)\di \Exp*{ \Pi^{(m+1)}(ds\otimes d\eta) \given \MCT^{(m)}}
			& =\int \Phi(s,\eta)\left(\int
			\at(\chi_{s-r})\di \MCT^{(m)}(dr \otimes d\chi)\right) \dd
			s\otimes Q(d\eta)\\
			&= \int \int \Phi(s,\eta) \at(\chi_{s-r}) \dd s \otimes
			Q(d\eta) \di  \MCT^{(m)}(dr \otimes d\chi)\\
			&= \int\int \Phi(s,\eta) \di \Exp*{\Pi^{(m,r,\chi)}(ds\otimes
		d\eta)}\di \MCT^{(m)}(dr \otimes d\chi).
		\end{split}
	\end{equation}
	This shows that $\Pi^{(m+1)}$
	conditioned on $\MCT^{(m)}$ is equal in distribution to $\int
	\Pi^{(m,r,\chi)} \di \MCT^{(m)}(dr \otimes d\chi) = \MCT^{(m+1)}$.
	This,~\eqref{eq:clf_induction_step}, and the induction hypothesis show that
	\begin{equation}
		\begin{split}
			\MoveEqLeft\lim_{D\to\infty}\E\Biggl[ \exp\biggl(
				-\sum_{k=0}^{m+1}\sum_{i=1}^D
				F\bigr(Z^{D,k}(i)\bigr)\biggr) \Biggr]\\
			&=\E\Biggl[ \lim_{D\to\infty}\exp\biggl(
				-\sum_{k=0}^m\sum_{i=1}^D
				F\bigl(Z^{D,k}(i)\bigr)\biggr)
				\Exp[\bigg]{\exp\biggl( -\sum_{i=1}^D
			F\bigl(Z^{D,m+1}(i)\bigr) \biggl)\given
		(Z^{M,m})_{M\in\N},\MCT^{(m)}}\Biggr]\\
			&=\E\Biggl[ \exp\biggl(
				-\sum_{k=0}^m\int
				F(\eta_{\cdot-s})\di \MCT^{(k)}(ds \otimes d\eta)\biggr)\Exp[\bigg]{
				\exp\biggl( -\int
				F(\eta_{\cdot-s}) \di \MCT^{(m+1)}(ds \otimes d\eta)\biggr)\given
				\MCT^{(m)}}\Biggr]\\
			&=\E\biggl[ \exp\biggl(
				-\sum_{k=0}^{m+1}\int
				F(\eta_{\cdot-s})\di \MCT^{(k)}(ds \otimes d\eta)\biggr)
				\biggr].
		\end{split}
	\end{equation}
	This finishes the induction step $\N_0 \ni m \to m+1$ and hence
	proves~\eqref{eq:clf_fdds}.
	Due to Lemma~\ref{l:essentially_finitely_many_generations}
  and the fact that
  \begin{equation*}
		\lim_{m\to\infty}\Exp[\bigg]{\exp\biggl( - \sum_{k=0}^m \sum_{j=1}^n\int
		\phi_j(\eta_{t_j-s})\di \MCT^{(k)}(ds \otimes d\eta) \biggr)}
		=\Exp[\bigg]{\exp\biggl( -\sum_{k=0}^\infty \sum_{j=1}^n \int
		\phi_j(\eta_{t_j-s})\di \MCT^{(k)}(ds \otimes d\eta) \biggr)},
  \end{equation*}
  it suffices to
	consider finite sums over $k$ in~\eqref{eq:clf_fdds} to prove the convergence of
	finite-dimensional distributions of~\eqref{eq:clf_relcomptoshow}. Therefore,
	this finishes the proof of
	Lemma~\ref{l:convergence_of_the_loop_free_process}.
\end{proof}

\section{Convergence to a forest of excursions}
\label{sec:convergence_excursions}
To prove Theorem~\ref{thm:convergence} in
Subsection~\ref{ss:convergence_proof} below, we first show that the migration
level processes
and the loop-free processes
have the same limit as $D\to\infty$; see Lemma~\ref{l:replace_fidi} below. Our
method of proof is the integration by parts formula for semigroups;
see~\eqref{eq:weak_error_T}, \eqref{eq:weak_error_0},
and~\eqref{eq:weak_error} below. For this, we first derive moment estimates in
Subsection~\ref{ss:results_for_migration_levels} and uniform bounds on the
derivatives of the semigroups of the loop-free processes in
Lemma~\ref{l:regularity_semigroup}.
\subsection{Results for the migration level processes}
\label{ss:results_for_migration_levels}
The following lemma implies that individuals on the
same deme have essentially the same migration level in the limit
$D\to\infty$ and is analogous to Lemma~4.24 in~\textcite{Hutzenthaler2012}.
\begin{lemma}[Essentially one level per deme]
	\label{l:one_generation_per_island}
	Assume that \textup{Setting~\ref{set:migration_levels}} holds, that
	\begin{equation}
		\sup_{D\in\N}\Exp*{\left(\sum_{i=1}^D\sum_{k\in\N_0}
		X_0^{D,k}(i)\right)^2} < \infty,
		\label{eq:one_gen_ass1}
	\end{equation}
	and that
	\begin{equation}
		\lim_{D\to\infty}\Exp*{\sum_{i=1}^D\sum_{k\in\N_0}
		X_0^{D,k}(i)\sum_{m\in\N_0\setminus\{k\}} X_0^{D,m}(i)} = 0.
		\label{eq:one_gen_ass2}
	\end{equation}
	Then it holds for all $T \in (0,\infty)$ that
	\begin{equation} \label{eq:one_generation_per_island}
		\lim_{D\to\infty}\sup_{t\in[0,T]}\Exp*{\sum_{i=1}^D\sum_{k\in\N_0}
		X_t^{D,k}(i)
		\sum_{m\in\N_0\setminus\left\{ k
		\right\}}X_t^{D,m}(i)}
		=0.
	\end{equation}
\end{lemma}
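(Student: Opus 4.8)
The plan is to control the ``cross-level mass''
\begin{equation*}
	\Phi_t^D \defeq \sum_{i=1}^D\sum_{k\in\N_0}X_t^{D,k}(i)\smallsum_{m\in\N_0\setminus\{k\}}X_t^{D,m}(i)
	= \sum_{i=1}^D\left[\bigl(\Xt_t^D(i)\bigr)^2 - \smallsum_{k\in\N_0}\bigl(X_t^{D,k}(i)\bigr)^2\right],
\end{equation*}
where $\Xt_t^D(i)\defeq\sum_{k\in\N_0}X_t^{D,k}(i)$ denotes the total mass on deme $i$ (cf.~\eqref{eq:all_levels}), by means of It\^o's formula and Gronwall's inequality. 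First I would fix $D\in\N$ and, for each $K\in\N$, apply It\^o's formula to the finite sum $\sum_{i=1}^D\sum_{k,m\in[K]_0,\,k\neq m}X_t^{D,k}(i)X_t^{D,m}(i)$. Since $W^k(i)$ and $W^m(i)$ are independent for $k\neq m$, there is no It\^o correction term, so by~\eqref{eq:XD_k} the drift of this sum equals
\begin{equation*}
	2\sum_{i=1}^D\sum_{k\in[K]_0}\Biggl(\,\smallsum_{m\in[K]_0\setminus\{k\}}X_s^{D,m}(i)\Biggr)\bigl(\beta_s^{D,k}(i)+\gamma_s^{D,k}(i)\bigr),
\end{equation*}
where $\beta_s^{D,k}(i)\defeq\tfrac{1}{D}\sum_{j=1}^D X_s^{D,k-1}(j)f(\Xt_s^D(j),\Xt_s^D(i))$ is the migration drift and $\gamma_s^{D,k}(i)\defeq\tfrac{X_s^{D,k}(i)}{\Xt_s^D(i)}\hht_D(\Xt_s^D(i))+\1_{k=0}h_D(0)$ the remaining drift in~\eqref{eq:XD_k}; all stochastic integrals occurring have bounded integrands and hence vanishing expectation.

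Taking expectations and letting $K\to\infty$ --- using monotone convergence for $\Phi_t^D$ and dominated convergence for the drift, which is dominated uniformly in $K$ by an integrable function of the total mass $M_s^D\defeq\sum_{i=1}^D\Xt_s^D(i)$ in view of $\abs{f}\leq L_f$, $\abs{\hht_D(x)}\leq L_h x$, $Dh_D(0)\leq 2\mu$ (Setting~\ref{set:coefficients}) and Lemma~\ref{l:X_Z_second_moment} --- I would obtain $\E[\Phi_t^D]=\E[\Phi_0^D]+2\int_0^t\E[\sum_{i=1}^D\sum_{k\in\N_0}(\Xt_s^D(i)-X_s^{D,k}(i))(\beta_s^{D,k}(i)+\gamma_s^{D,k}(i))]\dd s$. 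I would then estimate the integrand: the $\hht_D$-part of $\gamma$ contributes, in absolute value, at most $2L_h\sum_{i=1}^D\sum_{k\in\N_0}(\Xt_s^D(i)-X_s^{D,k}(i))X_s^{D,k}(i)=2L_h\Phi_s^D$ (since $\abs{\hht_D(y)/y}\leq L_h$ because $\hht_D(0)=0$); the immigration term $\1_{k=0}h_D(0)$ contributes at most $\tfrac{4\mu}{D}M_s^D$ (since $0\leq Dh_D(0)\leq 2\mu$); and the migration term $\beta$ contributes at most $\tfrac{2L_f}{D}\sum_{k\in\N_0}\bigl(\sum_{j=1}^D X_s^{D,k-1}(j)\bigr)\sum_{i=1}^D(\Xt_s^D(i)-X_s^{D,k}(i))\leq\tfrac{2L_f}{D}(M_s^D)^2$, where I use $\sum_{i=1}^D(\Xt_s^D(i)-X_s^{D,k}(i))\leq M_s^D$ for every $k$ and $\sum_{k\in\N_0}\sum_{j=1}^D X_s^{D,k-1}(j)=M_s^D$ (valid since $X^{D,-1}=0$). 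Thus the integrand is bounded in absolute value by $2L_h\Phi_s^D+\tfrac{4\mu}{D}M_s^D+\tfrac{2L_f}{D}(M_s^D)^2$, and Lemma~\ref{l:X_Z_first_moment}, Lemma~\ref{l:X_Z_second_moment} and~\eqref{eq:one_gen_ass1} then furnish a $D$-independent constant $C(T)\in[0,\infty)$ with $\E[\,\cdot\,]\leq 2L_h\E[\Phi_s^D]+C(T)/D$ for all $s\in[0,T]$. Gronwall's inequality gives $\sup_{t\in[0,T]}\E[\Phi_t^D]\leq(\E[\Phi_0^D]+TC(T)/D)e^{2L_hT}$, which tends to zero as $D\to\infty$ since $\E[\Phi_0^D]\to0$ by~\eqref{eq:one_gen_ass2}; this proves~\eqref{eq:one_generation_per_island}.

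I expect the main obstacle to be the bookkeeping of the drift of $\Phi^D$: besides making the It\^o computation for the infinite sum over migration levels rigorous via the truncation above, one must recognise that the migration and immigration contributions to this drift are genuinely of order $1/D$. This is the analytic counterpart of the sparse-regime heuristic that migrating and immigrating mass lands on asymptotically empty demes and hence creates only a vanishing amount of correlation between the migration levels on a deme, whereas the $\hht_D$-term produces the self-reinforcing contribution $2L_h\Phi_s^D$, which is absorbed by Gronwall's inequality.
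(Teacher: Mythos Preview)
Your argument is correct and follows the same strategy as the paper's proof: apply It\^o's formula to the cross-level mass, estimate the $\hht_D$-contribution to the drift by $2L_h\Phi_s^D$ and the migration and immigration contributions by $O(1/D)$ terms involving the total mass, and conclude by Gronwall's inequality. The only difference is that the paper localizes via the stopping time $\tau_M^D=\inf\{t\colon \sum_i\Xt_t^D(i)\geq M\}$, bounds the stopped total mass by $M$, and handles the event $\{\tau_M^D\leq T\}$ separately through Lemma~\ref{l:tau_theta}, whereas you bypass this localization by appealing directly to Lemma~\ref{l:X_Z_second_moment} to control $\E[(M_s^D)^2]$ uniformly in $D$; your route is slightly more direct and equally valid here.
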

\begin{proof}
	Fix $T \in (0,\infty)$ for the rest of the proof.
	For every $D,M\in\N$ we consider the stopping time $\tau_M^D$ defined
	in~\eqref{eq:tau_MD}.
	Since it holds for all $D\in\N$, all $i\in[D]$, and all
	$t\in[0,T]$ that $\sum_{m\in\N_0} X^{D,m}_t(i) \in [0,1]$, we obtain for all
	$D,M\in\N$ that
	\begin{equation}
		\begin{split}
			&\sup_{t\in[0,T]}\Exp*{\sum_{i=1}^D\sum_{k\in\N_0} X_{t}^{D,k}(i)
			\sum_{m\in\N_0\setminus\{k\}}X_{t}^{D,m}(i)}\\
			&\leq \sup_{t\in[0,T]}\Exp*{\sum_{i=1}^D\sum_{k\in\N_0} X_{t}^{D,k}(i)
			\sum_{m\in\N_0\setminus\{k\}}X_{t}^{D,m}(i)\1_{\{\tau^D_M > T\}}}
			+ \Exp*{\sup_{t\in[0,T]}\sum_{i=1}^D\sum_{k\in\N_0} X_{t}^{D,k}(i)
			\1_{\{\tau^D_M \leq T\}}}.
			\end{split}
			\label{eq:one_gen_red}
	\end{equation}
	Lemma~\ref{l:tau_theta} ensures that the
	second summand on the right-hand side of~\eqref{eq:one_gen_red} converges to
	zero uniformly in $D\in\N$ as $M\to\infty$. To
	prove~\eqref{eq:one_generation_per_island} it therefore suffices to show
	that the first summand on the right-hand side of~\eqref{eq:one_gen_red}
	converges to zero as $D\to\infty$ for all $M\in\N$.
	We fix $M\in\N$ for the rest of the proof.
	For all $D,K\in\N$ and all
	$t\in [0,\infty)$ let $M^{D,K}_t$ be real-valued
	random variables satisfying that a.s.
	\begin{equation}
		M^{D,K}_t = \sum_{i=1}^D\sum_{\substack{k,m=0 \\ m\neq k}}^K\int_0^t X^{D,k}_s(i)
		\sqrt{\frac{X_s^{D,m}(i)}{\sum_{l\in\N_0}X_s^{D,l}(i)}\sigma^2\left(
		\sum_{l\in\N_0}X_s^{D,l}(i) \right)}\dd W_s^m(i).
		\label{eq:one_gen_mg}
	\end{equation}
	It{\^o}'s formula,~\eqref{eq:XD_k},~\eqref{eq:one_gen_mg}, and
	Setting~\ref{set:coefficients} yield for all $D,K\in\N$ and all
	$t\in[0,\infty)$ that a.s.
	\begin{equation}
		\begin{split}
		\MoveEqLeft[3]\sum_{i=1}^D\sum_{k=0}^K X_{t}^{D,k}(i)
		\sum_{\substack{m=0 \\ m\neq k}}^K X_{t}^{D,m}(i)\\
		\leq{} & \sum_{i=1}^D\sum_{k=0}^K X_{0}^{D,k}(i)
			\sum_{\substack{m=0 \\ m\neq k}}^K X_{0}^{D,m}(i) + 2\int_0^t
			\sum_{i=1}^D\sum_{k=0}^K
			X_{s}^{D,k}(i)h_D(0)\dd s\\
			&+2\int_0^t \sum_{i=1}^D\sum_{\substack{k,m=0 \\ m \neq k}}^K X_{s}^{D,k}(i)
			\biggl(\frac{1}{D}\sum_{j=1}^D L_f X_{s}^{D,m-1}(j)
			+L_h X_{s}^{D,m}(i)\biggr) \dd s
			+ 2M^{D,K}_t.
		\end{split}
		\label{eq:one_gen_ito}
	\end{equation}
	For every $D,K\in\N$ the fact that
	\begin{equation}
		\sum_{i=1}^D \sum_{\substack{k,m=0 \\ m \neq k}}^K \int_0^T
		\bigl(X^{D,k}_s(i)\bigr)^2 \frac{X^{D,m}_s(i)}{\sum_{l\in\N_0}
		X^{D,l}_s(i)}\sigma^2\left( \sum_{l\in\N_0}X^{D,l}_s(i) \right) \dd s \leq
		DT L_\sigma
	\end{equation}
	implies that $(M^{D,K}_t)_{t\in[0,T]}$ is
	a martingale.
	Using this, using for all $D\in\N$ and all $s\in[0,T]$
	that $\sum_{i=1}^D\sum_{k\in\N_0} X^{D,k}_{s\wedge\tau_M^D}(i) \leq M$,
	and
	applying the optional sampling theorem (e.g.\ Theorem~2.2.13 in~\textcite{EthierKurtz1986}) and Tonelli's theorem, we obtain
	from~\eqref{eq:one_gen_ito} for
	all $D,K\in\N$ and all $t\in[0,T]$ that
	\begin{equation*}
		\begin{split}
			\E\left[\sum_{i=1}^D \sum_{k=0}^K X_{t\wedge \tau_M^D}^{D,k}(i)
			\sum_{\substack{m=0 \\ m\neq k}}^K X_{t\wedge\tau_M^D}^{D,m}(i)\right]
			\leq {}& \Exp*{\sum_{i=1}^D\sum_{k\in\N_0}
			X_0^{D,k}(i)\sum_{m\in\N_0\setminus\{k\}} X_0^{D,m}(i)}
			+ 2Th_D(0)M\\ 
			&+ \frac{1}{D}2TL_f M^2
			+2L_h\int_0^t\Exp*{\sum_{i=1}^D\sum_{k=0}^K
			X_{s\wedge\tau_M^D}^{D,k}(i)
			\sum_{\substack{m=0 \\ m\neq k}}^K X_{s\wedge\tau_M^D}^{D,m}(i)}\dd s.
		\end{split}
	\end{equation*}
	This, the fact that we have for all $D\in\N$ that $h_D(0)\leq 2\mu/D$, Gronwall's
	inequality, and the monotone convergence theorem ensure for all $D\in\N$
	that
	\begin{equation}
		\begin{split}
			\MoveEqLeft\sup_{t\in[0,T]}\Exp*{\sum_{i=1}^D\sum_{k\in\N_0} X_{t}^{D,k}(i)
			\sum_{m\in\N_0\setminus\{k\}}X_{t}^{D,m}(i)\1_{\{\tau_M^D >
			T\}}}\\
			&\leq\sup_{t\in[0,T]}\Exp*{\sum_{i=1}^D\sum_{k\in\N_0} X_{t\wedge\tau_M^D}^{D,k}(i)
			\sum_{m\in\N_0\setminus\{k\}}X_{t\wedge\tau_M^D}^{D,m}(i)}\\
			&\leq e^{2L_h T}\left(\Exp*{\sum_{i=1}^D\sum_{k\in\N_0}
			X_0^{D,k}(i)\sum_{m\in\N_0\setminus\{k\}} X_0^{D,m}(i)} +
			\frac{1}{D}2T(2\mu M+L_f M^2)\right).
		\end{split}
		\label{eq:one_gen_stopped}
	\end{equation}
	Letting $D\to\infty$ and applying~\eqref{eq:one_gen_ass2} finishes the proof
	of Lemma~\ref{l:one_generation_per_island}.
\end{proof}
The following lemma implies that the total mass is not evenly distributed over
all demes and is analogous to Lemma~4.23 in~\textcite{Hutzenthaler2012}.
\begin{lemma}[Concentration of mass]
	\label{l:concentration}
	Assume that \textup{Setting~\ref{set:excursion_measure}}
	and \textup{Setting~\ref{set:migration_levels}} hold, that
	\begin{equation}
		\sum_{k\in\N_0}\sup_{D\in\N}\Exp*{\sum_{i=1}^D X_0^{D,k}(i)} < \infty,
		\label{eq:concentration_ass1}
	\end{equation}
	that
	\begin{equation}
		\sup_{D\in\N}\Exp*{\left(\sum_{i=1}^D\sum_{k\in\N_0}
		X_0^{D,k}(i)\right)^2} < \infty,
		\label{eq:concentration_ass2}
	\end{equation}
	and that
	\begin{equation}
		\lim_{\delta\to 0}\sup_{D\in\N}\Exp*{\sum_{i=1}^D\sum_{k\in\N_0}
		\bigl(X_{0}^{D,k}(i)\wedge\delta\bigr)} = 0.
		\label{eq:concentration_ass3}
	\end{equation}
	Then it holds for all $T\in(0,\infty)$ that
	\begin{equation}
		\lim_{\delta\to 0}\sum_{k\in\N_0}\sup_{D\in\N}\sup_{t\in[0,T]}\Exp*{\sum_{i=1}^D
		\bigl(X_{t}^{D,k}(i)\wedge\delta\bigr)} = 0.
		\label{eq:concentration}
	\end{equation}
\end{lemma}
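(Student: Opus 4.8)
The plan is to bound the sub-$\delta$ mass one migration level at a time, via It\^o's formula applied to $\sum_i\phi_\delta(X_t^{D,k}(i))$ for a concave smooth approximation $\phi_\delta$ of $x\mapsto x\wedge\delta$, the decisive cancellation coming from the degeneracy of $\sigma^2$ at the trap $0$ together with the finiteness of the excursion area from Lemma~\ref{l:finite_excursion_area}. First I would reduce to a single level: assumption \eqref{eq:concentration_ass1} is exactly the hypothesis of Lemma~\ref{l:essentially_finitely_many_generations}, so $\sum_{k\in\N_0}B_k<\infty$ with $B_k\defeq\sup_{D\in\N}\sup_{t\in[0,T]}\Exp*{\sum_{i=1}^D X_t^{D,k}(i)}$, and since $0\le A_k(\delta)\defeq\sup_{D\in\N}\sup_{t\in[0,T]}\Exp*{\sum_{i=1}^D (X_t^{D,k}(i)\wedge\delta)}\le B_k$ with $\delta\mapsto A_k(\delta)$ nondecreasing, dominated convergence for series reduces \eqref{eq:concentration} to $\lim_{\delta\to0}A_k(\delta)=0$ for each fixed $k$. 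All $k$ would be handled by one argument: in \eqref{eq:XD_k} the immigration into level $k$ on deme $i$ is at most $\frac1D(L_f\sum_j X_s^{D,k-1}(j)+\1_{k=0}2\mu)$ uniformly in $i$, and its sum over $i$ has expectation at most $L_f B_{k-1}+\1_{k=0}2\mu<\infty$ uniformly in $D$ and $s\le T$ by Lemma~\ref{l:essentially_finitely_many_generations}; hence level $k$ is, for this estimate, of the same type as level $0$ with external immigration.

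Fix $k$ and choose $\phi_\delta\in C^2([0,1],\R)$ concave with $\phi_\delta(0)=0$, $0\le\phi_\delta'\le1$, $\phi_\delta''\le0$ and $c_0(x\wedge\delta)\le\phi_\delta(x)\le x\wedge\delta$ for all $x$ (e.g.\ $\phi_\delta(x)=\delta(1-e^{-x/\delta})$, $c_0=1-e^{-1}$; its shape near $0$ is refined in the last step). Then $\Exp*{\sum_i\phi_\delta(X_0^{D,k}(i))}\le\Exp*{\sum_i\sum_m(X_0^{D,m}(i)\wedge\delta)}\to0$ uniformly in $D$ by \eqref{eq:concentration_ass3}, so it suffices to bound $\Exp*{\sum_i\phi_\delta(X_t^{D,k}(i))}$. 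Writing $\Sigma_i\defeq\sum_m X^{D,m}(i)$ and using that the diffusion integral is a martingale (bounded integrand), It\^o's formula gives
\begin{equation*}
\Exp*{\sum_i\phi_\delta(X_t^{D,k}(i))}\le\Exp*{\sum_i\phi_\delta(X_0^{D,k}(i))}+\int_0^t\Exp*{\sum_i\Bigl(\phi_\delta'(X_s^{D,k}(i))\,b^{D,k}_s(i)+\tfrac12\phi_\delta''(X_s^{D,k}(i))\tfrac{X_s^{D,k}(i)}{\Sigma_i}\sigma^2(\Sigma_i)\Bigr)}\dd s ,
\end{equation*}
with $b^{D,k}_s(i)$ the full drift of \eqref{eq:XD_k}. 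Its local part $\frac{X_s^{D,k}(i)}{\Sigma_i}\hht_D(\Sigma_i)$ satisfies $|\frac{X_s^{D,k}(i)}{\Sigma_i}\hht_D(\Sigma_i)|\le L_h X_s^{D,k}(i)$ (as $\hht_D(0)=0$ and $\hht_D$ is $L_h$-Lipschitz), so it contributes at most $L_h\phi_\delta'(X_s^{D,k}(i))X_s^{D,k}(i)\le L_h\phi_\delta(X_s^{D,k}(i))$ by concavity --- a Gronwall term needing no control of the coupling to $\Sigma_i$. The nonpositive second-derivative term I would \emph{keep}.

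The crux is the immigration part of $b^{D,k}_s(i)$: it produces $\sum_i\phi_\delta'(X_s^{D,k}(i))\,\frac1D(L_f\sum_j X_s^{D,k-1}(j)+\1_{k=0}2\mu)$, whose naive bound (from $\phi_\delta'\le1$ and the fact that up to $D$ demes carry sub-$\delta$ mass) is of order $1$ and does \emph{not} vanish as $\delta\to0$. It must be absorbed by the retained term $\frac12\phi_\delta''(X_s^{D,k}(i))\frac{X_s^{D,k}(i)}{\Sigma_i}\sigma^2(\Sigma_i)$, which is strongly negative where $X_s^{D,k}(i)$ is near level $\delta$ and which accounts for the mass leaving the sub-$\delta$ regime by growing past $\delta$. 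To make this quantitative I would use: (i) assumption \eqref{eq:Y_hits_zero_wpp} forces $\sigma^2(y)\ge c_\sigma y$ on some $[0,y_0]$, so on $\{\Sigma_i\le y_0\}$ one has $\frac{X_s^{D,k}(i)}{\Sigma_i}\sigma^2(\Sigma_i)\ge c_\sigma X_s^{D,k}(i)$; (ii) the localization Lemma~\ref{l:tau_theta} (applicable by \eqref{eq:concentration_ass2}), under which $\#\{i:\Sigma_i>y_0\}\le M/y_0$, so demes with large total mass contribute only $O(M/D)\to0$; (iii) a scale-function-type refinement of $\phi_\delta$ near $0$ (profile $\propto S$ below $\delta$), tuned so that $\frac{c_\sigma}{2}\phi_\delta''(x)x$ dominates (the immigration intensity)$\cdot\phi_\delta'(x)$ \emph{in expectation}, using the first- and second-moment bounds on $\sum_j X_s^{D,k-1}(j)$ from Lemmas~\ref{l:essentially_finitely_many_generations} and~\ref{l:X_Z_second_moment} (splitting on whether $\sum_j X_s^{D,k-1}(j)$ exceeds a large threshold). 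The leftover error then reduces to a term of the form (bounded intensity)$\,\cdot\int_0^1\frac{y\wedge\delta}{\frac12\sigma^2(y)s(y)}\dd y=$(bounded intensity)$\,\cdot\int \int_0^\infty (\eta_t\wedge\delta)\dd t \di Q(d\eta)$, which tends to $0$ as $\delta\to0$ by Lemma~\ref{l:finite_excursion_area} and dominated convergence (the integrand is dominated by the $Q(d\eta)\otimes\dd t$-integrable $\eta_t$). Feeding this into Gronwall's inequality and letting first $D\to\infty$ (killing the localization and large-deme errors), then $M\to\infty$, then $\delta\to0$ gives $\lim_{\delta\to0}A_k(\delta)=0$, hence \eqref{eq:concentration}. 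The main obstacle is precisely step (iii): quantifying, \emph{uniformly in $D$}, the compensation between the random immigration rate and the local-time contribution of $\phi_\delta''$ near the trap, since the crude bounds $\phi_\delta'\le1$ and $\#\{i:X_s^{D,k}(i)\le\delta\}\le D$ discard exactly the factor one must recover from the degeneracy of $\sigma^2$ at $0$ and from Lemma~\ref{l:finite_excursion_area}.
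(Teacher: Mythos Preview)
Your reduction to a single level $k$ via the summable dominating sequence $B_k$ is fine and parallels the paper's tail-in-$k$ cut. The localization via $\tau_M^D$ is also shared. But from that point on the paper does something quite different from your It\^o-formula plan, and your step (iii) --- which you yourself flag as the main obstacle --- is not actually a proof.

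The paper never applies It\^o's formula to $\sum_i\phi_\delta(X_t^{D,k}(i))$. Instead it uses \emph{stochastic comparison}: on $\{\tau_M^D>T\}$ the total $\Xt_t^D(i)=\sum_m X_t^{D,m}(i)$ solves~\eqref{eq:XD} with a drift bounded above by that of the one-dimensional SDE~\eqref{eq:SDE_YD} with constant immigration $c=L_fM+Dh_D(0)$, so $\Xt_t^D(i)$ is stochastically dominated by a solution $\Yt_{t,0}^{D,c}(i)$ of~\eqref{eq:SDE_YD} started at $\Xt_0^D(i)$. Since $X_t^{D,k}(i)\wedge\delta\le\Xt_t^D(i)\wedge\delta$ and the $\Yt^{D,c}(i)$ are i.i.d.\ up to initial values, this bounds $\sum_i\Exp{X_t^{D,k}(i)\wedge\delta}$ by $D\,\Exp{Y_{t,0}^{D,c}(1)\wedge\delta}$ plus an initial-value term controlled by~\eqref{eq:concentration_ass3}. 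The first quantity is then handled by the already-proved Poisson-limit Lemma~\ref{l:vanishing_immigration_general} with $F(\eta)=\eta_t\wedge\delta$, which is exactly what produces the excursion integral $\int\int_0^T(\chi_{T-r}\wedge\delta)\,dr\,Q(d\chi)$; dominated convergence and Lemma~\ref{l:finite_excursion_area} finish the job.

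Your route tries to extract this same excursion integral directly from the generator of $X^{D,k}(i)$, but the mechanism you describe does not close. With any concave $\phi_\delta$ satisfying $\phi_\delta'(0^+)>0$, the quantity $\frac{c_\sigma}{2}x\phi_\delta''(x)+\frac{1}{D}(\text{immigration})\phi_\delta'(x)$ is positive on an interval of the form $[0,C\delta)$, and summing over demes the positive part is $\frac{1}{D}(\text{immigration})\cdot\#\{i:X_s^{D,k}(i)<C\delta\}$, which is of order one because almost all demes sit at or near zero. A ``profile $\propto S$ below $\delta$'' does not help: $s(0)=1$ gives $S(y)\sim y$, so you recover a linear profile with the same problem. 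To convert the sub-$\delta$ occupation into the speed-measure integral $\int_0^1\frac{y\wedge\delta}{\frac12\sigma^2(y)s(y)}\,dy$ you need occupation-density or Green-function information for the process, and $X^{D,k}(i)$ is not a one-dimensional diffusion with known speed measure --- it is coupled to all other demes and levels through $\Sigma_i$ and the random immigration. The paper's comparison step is precisely the device that trades this intractable process for the explicit diffusion $Y^{D,c}$, to which the time-reversal result behind Lemma~\ref{l:vanishing_immigration_general} applies. Without an analogous comparison or Green-function computation, the ``leftover error then reduces to'' assertion in your outline is a gap, not a step.
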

\begin{proof}
	Fix $T\in (0,\infty)$ for the rest of the proof. For every $D,M\in\N$ we
	consider the stopping time $\tau_M^D$ defined in~\eqref{eq:tau_MD}. Then it
	holds for all $\delta\in(0,\infty)$, all $K\in\N_0$, and all $M\in\N$ that
	\begin{equation}
		\begin{split}
			\MoveEqLeft[3]\sum_{k\in\N_0}\sup_{D\in\N}\sup_{t\in[0,T]}
			\Exp*{\sum_{i=1}^D \bigl(X_t^{D,k}(i)\wedge\delta\bigr)}\\
			\leq {}& \sum_{k=0}^K\sup_{D\in\N}\sup_{t\in[0,T]} \Exp*{\sum_{i=1}^D
			\bigl(X_t^{D,k}(i)\wedge\delta\bigr)\1_{\{\tau_M^D > T\}}}
			+ \sum_{k=K}^{\infty}\sup_{D\in\N}\sup_{t\in[0,T]} \Exp*{\sum_{i=1}^D
			X_t^{D,k}(i)}\\
			&+ \sum_{k=0}^K\sup_{D\in\N} \Exp*{\sup_{t\in[0,T]}\sum_{i=1}^D
			\sum_{m\in\N_0}X_t^{D,m}(i)\1_{\{\tau_M^D \leq T\}}}.
		\end{split}
		\label{eq:conc_reduction}
	\end{equation}
	Lemma~\ref{l:essentially_finitely_many_generations}
	and~\eqref{eq:concentration_ass1} imply that the second
	summand on the right-hand side of~\eqref{eq:conc_reduction} converges to
	zero as $K\to\infty$,
	while Lemma~\ref{l:tau_theta} and~\eqref{eq:concentration_ass2} ensure for
	all $K\in\N_0$ that the third summand on the right-hand side
	of~\eqref{eq:conc_reduction} converges to zero as $M\to\infty$.
	To prove~\eqref{eq:concentration} it therefore suffices to show for all
	$K\in\N_0$ and all $M\in\N$ that the first
	summand on the right-hand side of~\eqref{eq:conc_reduction} converges to
	zero as $\delta \to 0$. We fix $k\in\N_0$ and $M\in\N$ for the rest
	of the proof. 
	Setting~\ref{set:coefficients} implies the existence of $D_0\in\N$ such that
	for all $D\in\N\cap [D_0,\infty)$ we have $L_f M /D + h_D(1) \leq
	0$.
	For every $D\in\N\cap [D_0,\infty)$ let $\{(\Xt^D_t(i))_{t\in [0,\infty)}
	\colon i\in[D]\}$ be as in Lemma~\ref{l:decomposition}. Moreover, for every
	$D\in\N\cap[D_0,\infty)$ and every $i\in [D]$ let
	$(\Yt^{D,L_f M + Dh_D(0)}_{t,0}(i))_{t\in [0,\infty)}$ and
	$(Y^{D,L_f M + Dh_D(0)}_{t,0}(i))_{t\in[0,\infty)}$ be two solutions
	of~\eqref{eq:SDE_YD} with respect to the same Brownian motion satisfying
	a.s.~that
	$\Yt^{D,L_f M + Dh_D(0)}_{0,0}(i) = \Xt_0^D(i)$ and
	$Y^{D,L_f M + Dh_D(0)}_{0,0}(i) = 0$.
	Lemma~\ref{l:decomposition}, Setting~\ref{set:coefficients}, and Lemma~3.3
	in~\textcite{HutzenthalerWakolbinger2007} show for all
	$D\in\N\cap [D_0,\infty)$, all $i\in[D]$, and all $t\in[0,T]$ that
	$\Xt_t^{D}(i)$ is stochastically bounded from above by
	$\Yt_{t,0}^{D,L_fM+Dh_D(0)}(i)$ on the event $\{\tau^D_M > T\}$.
	This, \eqref{eq:all_levels}, the fact that for all $a,b,\delta \in
	[0,\infty)$ it holds that $\abs{a\wedge \dl - b\wedge \dl} =
	\abs{a\wedge \dl - b\wedge \dl} \wedge \dl \leq \abs{a-b}
	\wedge \delta$, and Jensen's inequality for the conditional expectation
	applied for all $\delta \in (0,\infty)$ with the concave function $[0,1] \ni
	x \mapsto x \wedge \dl$ yield for all $\delta \in (0,\infty)$, all
	$t\in[0,T]$, and all $D\in\N\cap [D_0,\infty)$ that
	\begin{equation}
		\begin{split}
			\MoveEqLeft[3]\Exp*{\sum_{i=1}^D\bigl(X_t^{D,k}(i)\wedge\dl\bigr)\1_{\{\tau_M^D
				> T\}}}\\
			\leq{} &
				\sum_{i=1}^D\Exp*{
				\bigl(\Xt_t^{D}(i)\wedge\dl\bigr)\1_{\{\tau_M^D> T\}}}
			\leq{}
				\sum_{i=1}^D\Exp[\Big]{\Yt_{t,0}^{D,L_f M+Dh_D(0)}(i) \wedge \dl}\\
			\leq{} &
				\sum_{i=1}^D\Exp[\Big]{Y_{t,0}^{D,L_f M +Dh_D(0)}(i) \wedge \dl}\\
				&+ \sum_{i=1}^D\Exp[\bigg]{\Exp[\Big]{\abs[\big]{\Yt_{t,0}^{D,L_f
				M+Dh_D(0)}(i) - Y_{t,0}^{D,L_f M+Dh_D(0)}(i)} \given \Xt_0^D(i)} \wedge
				\dl}.
		\end{split}
		\label{eq:tau_not_yet}
	\end{equation}
	This, Lemma~\ref{l:YD_L1_distance}, and the fact that it holds for all
	$t\in[0,T]$ that $e^{L_h t} \geq 1$ ensure for all $\delta \in (0,\infty)$,
	all $t\in[0,T]$, and all $D\in\N\cap [D_0,\infty)$ that
	\begin{equation}
		\begin{split}
			\Exp*{\sum_{i=1}^D\bigl(X_t^{D,k}(i)\wedge\dl\bigr)\1_{\{\tau_M^D >
				T\}}}
			\leq{} &
			D\Exp[\Big]{Y_{t,0}^{D,L_f M+Dh_D(0)}(1)\wedge\dl}
				+e^{L_h t}\sum_{i=1}^D\Exp*{\Xt^D_0(i)\wedge\dl}.
		\end{split}
		\label{eq:tau_not_yet2}
	\end{equation}
	This,
	Lemma~\ref{l:vanishing_immigration_general} with $(g_D)_{D\geq D_0} = (L_f
	M+Dh_D(0))_{D\geq D_0}$ and $g = L_f M + \mu$,
	Equation~\eqref{eq:all_levels}, and subadditivity for all $\delta \in
	(0,\infty)$ of $[0,1] \ni x \mapsto x \wedge \dl$ imply for all
	$\delta \in (0,\infty)$ that
	\begin{equation}
		\begin{split}
			\MoveEqLeft\sup_{t\in[0,T]} \varlimsup_{D\to\infty}
			\Exp*{\sum_{i=1}^D\bigl(X_t^{D,k}(i)\wedge\dl\bigr)\1_{\{\tau_M^D>
			T\}}}\\
			&\leq (L_f M +
			\mu)\int\int_0^T\bigl(\chi_{T-r}\wedge\dl\bigr)\dd r \di Q(d\chi)
			+e^{L_h
			T}\sup_{D\in\N}\Exp*{\sum_{i=1}^D\sum_{m\in\N_0}\bigl(X^{D,m}_0(i)\wedge\dl\bigr)}.
		\end{split}
		\label{eq:tau_not_yet_upper_bound}
	\end{equation}
	The first summand on the right-hand side
	of~\eqref{eq:tau_not_yet_upper_bound} converges to zero
	as $\delta \to 0$ by the
	dominated convergence theorem and Lemma~\ref{l:finite_excursion_area}.
	The second summand on the
	right-hand side of~\eqref{eq:tau_not_yet_upper_bound} converges to zero as
	$\delta \to 0$ due to~\eqref{eq:concentration_ass3}.
	This completes the proof of Lemma~\ref{l:concentration}.
\end{proof}

\subsection{The migration level processes and the loop-free processes have the
same limit} \label{ss:reduction_loop_free}
For the rest of this paragraph, we fix $K\in\N_0$ and assume that
Setting~\ref{set:migration_levels} holds. For
all $D\in\N$ we denote by $\{S^D_t \colon t\in[0,\infty)\}$
the strongly continuous contraction semigroup on
$C([0,1]^{[D]\times[K]_0},\R)$ associated with
$\{(Z^{D,k}_t(i))_{t\in[0,\infty)} \colon (i,k) \in [D] \times
[K]_0 \}$; see Remark~3.2 in~\textcite{ShigaShimizu1980}. Then for all $D\in\N$, all
$t\in[0,\infty)$, all $\psi \in
C([0,1]^{[D]\times[K]_0},\R)$, and all $x \in
[0,1]^{[D]\times[K]_0}$ it holds that
\begin{equation}
	(S_t^D\psi)(x) =
	\Exp*{\psi\left(\bigl(Z^{D,k,x}_t(i)
	\bigr)_{(i,k)\in[D]\times[K]_0}\right)}.
	\label{eq:semigroup}
\end{equation}
For every $D\in\N$ the semigroup $\{S_t^D \colon t\in[0,\infty)\}$ has as its generator the closure
of the operator $G^D$ acting on
$C^2([0,1]^{[D]\times[K]_0},\R)$, given
for all $\psi\in C^2([0,1]^{[D]\times[K]_0},\R)$ and all $x =
(x_{i,k})_{(i,k)\in[D]\times[K]_0} \in
[0,1]^{[D]\times[K]_0}$ by
\begin{equation}
	\begin{split}
		(G^{D}\psi)(x) ={} &\sum_{i=1}^D\sum_{k=0}^K\left(
		\frac{\1_{k > 0}}{D}\sum_{j=1}^D x_{j,\abs{k-1}}f(x_{j,\abs{k-1}},x_{i,k}) +
		\hht_D(x_{i,k}) + \1_{k=0}
		h_D(0) 
		\right)\frac{\partial \psi}{\partial x_{i,k}}(x)\\
		&+\frac{1}{2}\sum_{i=1}^D\sum_{k=0}^K
		\sigma^2(x_{i,k})\frac{\partial^2\psi}{\partial
		x^2_{i,k}}(x).
	\end{split}
	\label{eq:generatorZ}
\end{equation}
The following lemma establishes uniform bounds on the derivatives of the
semigroups of the loop-free processes.
\begin{lemma}[Uniform $C^2$-bound] \label{l:regularity_semigroup}
	Assume that \textup{Setting~\ref{set:migration_levels}} holds,
	let $K\in\N_0$, and for
	every $D\in\N$ let
	$\{S_t^D \colon t\in[0,\infty)\}$ be as in~\eqref{eq:semigroup}.
	Then there exists $c \in [0,\infty)$ such that it holds for all $D\in\N$, all
	$t\in[0,\infty)$, and all $\psi^D\in
		C^2([0,1]^{[D]\times[K]_0},\R)$ that $S_t^D\psi^D \in
		C^2([0,1]^{[D]\times[K]_0},\R)$ and
	\begin{equation}
		\norm{S_t^D\psi^D}_{C^2} \leq
		e^{ct}\norm{\psi^D}_{C^2}.
		\label{eq:semigroup_estimate}
	\end{equation}
\end{lemma}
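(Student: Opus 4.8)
The plan is to obtain the estimate by differentiating the SDE~\eqref{eq:ZD_k} (restricted to levels $k\in[K]_0$) in its initial condition and controlling the resulting first- and second-order derivative processes by a Gronwall argument, uniformly in $D\in\N$. Concretely, fix $D\in\N$ and $\psi^D\in C^2([0,1]^{[D]\times[K]_0},\R)$. Under Setting~\ref{set:coefficients} the coefficients of~\eqref{eq:ZD_k} — the drift $b^{D}_{(i,k)}(x)\defeq\frac{\1_{k>0}}{D}\sum_{j=1}^D x_{j,\abs{k-1}}f(x_{j,\abs{k-1}},x_{i,k})+\hht_D(x_{i,k})+\1_{k=0}h_D(0)$ and the diffusion matrix $\operatorname{diag}(\sigma^2(x_{i,k}))$ — have entries in $C^3$ with first and second derivatives bounded uniformly in $D$ (crucially, the migration term carries a factor $\frac1D$, so its derivative in any single coordinate contributes $O(\frac1D\cdot L_f)=O(1)$ and its second derivatives similarly stay $O(1)$; the diagonal structure of the noise means $\sqrt{\sigma^2}$ is only ever evaluated at $[0,1]$ and we may work with a smooth extension). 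Hence the flow $x\mapsto Z^{D,\cdot,x}_t(\cdot)$ is (stochastically) $C^2$ in $x$, and one has the standard representations, for the first derivative process $\partial_{(j,\ell)}Z^{D,k,x}_t(i)$ and the second derivative process $\partial_{(j,\ell)}\partial_{(j',\ell')}Z^{D,k,x}_t(i)$, as solutions of the linear SDEs obtained by formally differentiating~\eqref{eq:ZD_k}.

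The key step is the Gronwall estimate. Write $V_t\defeq\sum_{i,k}\sum_{j,\ell}\bigl(\partial_{(j,\ell)}Z^{D,k}_t(i)\bigr)^2$ and $W_t\defeq\sum_{i,k}\sum_{j,\ell}\sum_{j',\ell'}\bigl(\partial_{(j,\ell)}\partial_{(j',\ell')}Z^{D,k}_t(i)\bigr)^2$. Applying It\^o's formula to $V_t$ and $W_t$, taking expectations, and using the uniform (in $D$) bounds on the first and second derivatives of the coefficients — together with the observation that the off-diagonal coupling in the drift comes only through the $\frac1D\sum_j$ migration term, whose contribution after Cauchy--Schwarz over the $D$ summands is again $O(1)$ in $D$ — yields $\frac{d}{dt}\Exp{V_t}\le c_1\Exp{V_t}$ and $\frac{d}{dt}\Exp{W_t}\le c_2(\Exp{W_t}+\Exp{V_t})$ for constants $c_1,c_2\in[0,\infty)$ independent of $D$. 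Since $V_0\le 1$ and $W_0=0$ (the flow at time $0$ is the identity), Gronwall gives $\Exp{V_t}\le e^{c_1 t}$ and then $\Exp{W_t}\le c_3 e^{c_3 t}$ uniformly in $D$. Combining these with the chain rule applied to $(S^D_t\psi^D)(x)=\Exp{\psi^D(Z^{D,\cdot,x}_t(\cdot))}$, namely
\begin{equation*}
\partial_{(j,\ell)}(S^D_t\psi^D)(x)=\Exp*{\sum_{i,k}(\partial_{x_{i,k}}\psi^D)\,\partial_{(j,\ell)}Z^{D,k,x}_t(i)}
\end{equation*}
and the analogous (two-term) formula for second derivatives, together with $\norm{\psi^D}_{C^2}$ controlling $\partial_{x_{i,k}}\psi^D$ and $\partial^2_{x_{i,k},x_{i',k'}}\psi^D$, produces $\norm{S^D_t\psi^D}_{C^2}\le e^{ct}\norm{\psi^D}_{C^2}$ for a suitable $c$; that $S^D_t\psi^D\in C^2$ follows from the same differentiation-under-the-expectation argument (dominated convergence, using the moment bounds on the derivative processes). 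One should also note that the estimate~\eqref{eq:semigroup_estimate} extends from $C^3$ test functions (where the differentiation is cleanest) to all $\psi^D\in C^2$ by a density/mollification argument, since $C^2$-norms are preserved in the limit.

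The main obstacle is the uniformity in $D$: one must be careful that the number of coordinates $D(K+1)$ grows but that the sums $V_t$, $W_t$ still obey $D$-independent Gronwall bounds. This works precisely because (i) the noise is \emph{diagonal}, so the diffusion part of $d\Exp{V_t}$ and $d\Exp{W_t}$ contributes only $\sum_{i,k}(\cdots)$ rather than a double sum over demes, and (ii) the only inter-deme coupling in the drift carries the prefactor $\frac1D$, so that $\partial_{x_{j,\abs{k-1}}}b^D_{(i,k)}=O(\frac1D)$ and, after summing the cross-terms $\sum_{j}\partial_{(j,\ell)}Z\cdot\partial_{(i,\abs{k-1})}b^D_{(i,k)}\cdot(\cdots)$ over $j$ and applying Cauchy--Schwarz, the migration contribution to $\frac{d}{dt}\Exp{V_t}$ is bounded by a $D$-independent multiple of $\Exp{V_t}$. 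Verifying this bookkeeping carefully — that every place where a sum over $D$ demes appears is compensated either by a $\frac1D$ factor or by the diagonal structure — is the crux; the rest is a routine linear-SDE-plus-Gronwall computation in the spirit of Lemma~4.25 in~\textcite{Hutzenthaler2012}.
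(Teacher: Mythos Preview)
Your approach has a genuine gap: the diffusion coefficient in~\eqref{eq:ZD_k} is $\sqrt{\sigma^2(\cdot)}$, and under Setting~\ref{set:coefficients} one has $\sigma^2(0)=\sigma^2(1)=0$ with $\sigma^2>0$ on $(0,1)$. Hence $\sqrt{\sigma^2}$ behaves like a square root near the boundary and is \emph{not} $C^1$ (not even Lipschitz) there; no ``smooth extension'' can repair this. The classical theorem that the stochastic flow $x\mapsto Z^{D,\cdot,x}_t$ is $C^2$ in the initial point requires at least $C^2$ coefficients, so the representation you write for $\partial_{(j,\ell)}(S^D_t\psi^D)$ and its second-order analogue is not justified. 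Worse, even at the formal level your Gronwall estimate fails: the It\^o drift of $V_t$ contains the quadratic variation term $\sum_{i,k}(\partial_{(j,\ell)}Z^{D,k}_t(i))^2\bigl(\tfrac{d}{dx}\sqrt{\sigma^2}(Z^{D,k}_t(i))\bigr)^2$, and $\bigl(\tfrac{d}{dx}\sqrt{\sigma^2}\bigr)^2=\tfrac{((\sigma^2)')^2}{4\sigma^2}$ is unbounded near $0$ and $1$. So $\tfrac{d}{dt}\Exp{V_t}\le c_1\Exp{V_t}$ with a $D$-independent (or even finite) $c_1$ does not follow from your argument, and the situation for $W_t$ (which would involve second derivatives of $\sqrt{\sigma^2}$) is worse still.

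The paper avoids this by never differentiating $\sqrt{\sigma^2}$: it works at the level of the generator~\eqref{eq:generatorZ}, where only $\sigma^2\in C^3$ appears, and invokes Theorem~4.1 in \textcite{HutzenthalerPieper2018}, a semigroup regularity result tailored to exactly this degenerate-diffusion setting. The input that theorem needs is a uniform (in $D$) bound on $\sum_{i,k}\norm{\partial^\alpha f_{i,k}}_\infty$ for $\abs{\alpha}\le 2$ together with $\norm{\sigma^2}_{C^2}$; the paper verifies the former explicitly (obtaining $3\norm{f}_{C^1}+\norm{\hht_D}_{C^1}$ for first derivatives and $8\norm{f}_{C^2}+\norm{\hht_D}_{C^2}$ for second derivatives, both $D$-independent by Setting~\ref{set:coefficients}) and then reads off $c=4\bigl(8\norm{f}_{C^2}+\sup_{D\in\N}\norm{\hht_D}_{C^2}\bigr)+\tfrac{1}{2}\norm{\sigma^2}_{C^2}$. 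In other words, the $D$-uniform bookkeeping you identify as ``the crux'' is indeed the easy part and is precisely what the paper checks; the hard analytic work---propagating $C^2$ regularity through a semigroup whose diffusion degenerates on the boundary---is outsourced to the cited theorem and cannot be replaced by a naive flow-differentiation argument.
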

\begin{proof}
	For every $D\in\N$ and every $(i,k)\in [D]\times[K]_0$
	let $f_{i,k}\colon [0,1]^{[D]\times[K]_0} \to
	\R$ be the function that satisfies for all $x =
	(x_{j,l})_{(j,l)\in[D]\times[K]_0} \in
	[0,1]^{[D]\times[K]_0}$ that
	\begin{equation}
		f_{i,k}(x) =
		\frac{\1_{k > 0}}{D}\sum_{j=1}^D x_{j,\abs{k-1}}f(x_{j,\abs{k-1}},x_{i,k}) +
		\hht_D(x_{i,k}) + \1_{k=0} h_D(0).
	\end{equation}
	Then it holds for all $D\in\N$ and all $\alpha \in
	\N_0^{[D]\times[K]_0}$ with $\abs{\alpha}=1$ that
	\begin{equation}
		\begin{split}
			\sum_{i=1}^D \sum_{k=0}^K \norm{\partial^\alpha f_{i,k}}_\infty
			\leq \norm{f}_\infty + \norm[\big]{\tfrac{\partial f}{\partial x_1}}_\infty +
			\norm[\big]{\tfrac{\partial f}{\partial x_2}}_\infty  +
			\norm[\big]{\tfrac{d \hht_D}{dx}}_\infty
			\leq 3\norm{f}_{C^1} + \norm{\hht_D}_{C^1}
		\end{split}
	\end{equation}
	and for all $D\in\N$ and all $\alpha \in \N_0^{[D]\times[K]_0}$ with
	$\abs{\alpha}=2$ that
	\begin{equation}
		\begin{split}
			\sum_{i=1}^D \sum_{k=0}^K \norm{\partial^\alpha f_{i,k}}_\infty
			&\leq 2\bigl(\norm[\big]{\tfrac{\partial f}{\partial
				x_1}}_\infty + \norm[\big]{\tfrac{\partial f}{\partial
			x_2}}_\infty\bigr) +
				\norm[\big]{\tfrac{\partial^2 f}{\partial x_1^2}}_\infty +
				2\norm[\big]{\tfrac{\partial^2 f}{\partial x_1 \partial x_2}}_\infty +
				\norm[\big]{\tfrac{\partial^2 f}{\partial x_2^2}}_\infty +
			\norm[\big]{\tfrac{d^2 \hht_D}{dx^2}}_\infty\\
			&\leq 8\norm{f}_{C^2} + \norm{\hht_D}_{C^2}.
		\end{split}
	\end{equation}
	We define
	\begin{equation}
		c \defeq 4\Bigl( 8\norm{f}_{C^2} +
		\sup_{D\in\N}\norm{\hht_D}_{C^2} \Bigr) +
		\tfrac{1}{2}\norm{\sigma^2}_{C^2},
	\end{equation}
	which is finite due to Setting~\ref{set:coefficients}.
	Then Theorem~4.1 in~\textcite{HutzenthalerPieper2018} shows for all $D\in\N$, all
	$t\in[0,\infty)$, and all $\psi^D \in
	C^2([0,1]^{[D]\times[K]_0}, \R)$ that $S^D_t\psi^D \in
	C^2([0,1]^{[D]\times[K]_0},\R)$ and
	that~\eqref{eq:semigroup_estimate} holds.
	This finishes the proof of Lemma~\ref{l:regularity_semigroup}.
\end{proof}

The following lemma follows immediately from Theorem~3.16
in~\textcite{Liggett2010} and Lemma~\ref{l:regularity_semigroup} above.
\begin{lemma}[Kolmogorov backward equation] \label{l:classical_solution}
	Assume that \textup{Setting~\ref{set:migration_levels}} holds,
	let $T\in(0,\infty)$, let $D\in\N$, let
	$K\in\N_0$, let
	$\psi \in C^2([0,1]^{[D]\times[K]_0},\R)$, let
	$\{S_t^D \colon t \in[0,\infty)\}$ be as in~\eqref{eq:semigroup}, let $G^D$ be as
	in~\eqref{eq:generatorZ}, and define the
	function $u\colon [0,T]\times [0,1]^{[D]\times[K]_0} \to \R$
	by
	\begin{equation}
		[0,T] \times [0,1]^{[D]\times[K]_0} \ni (t,x) \mapsto
		u(t,x) \defeq
		(S_{T-t}^D\psi)(x).
	\end{equation}
	Then it holds that
		$u \in C^{1,2}([0,T] \times
		[0,1]^{[D]\times[K]_0},\R)$
	and it holds for all $t \in [0,T]$ and all $x\in
	[0,1]^{[D]\times[K]_0}$ that $u(T,x) = \psi(x)$ and
	\begin{equation}
		\begin{split}
			&\frac{\partial u}{\partial t}(t,x) +
			(G^Du)(t,x) = 0.
		\end{split}
		\label{eq:PDE_JansonTysk}
	\end{equation}
\end{lemma}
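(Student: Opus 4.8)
The statement is essentially a packaging of two ingredients: the abstract Kolmogorov backward equation for the semigroup $\{S^D_t\colon t\in[0,\infty)\}$ (Theorem~3.16 in~\textcite{Liggett2010}) and the quantitative $C^2$-smoothing estimate for the loop-free semigroup established in Lemma~\ref{l:regularity_semigroup}. The plan is to check that the latter supplies precisely the regularity hypothesis needed by the former and then invoke it.

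I would proceed as follows. First, recall that by construction (Remark~3.2 in~\textcite{ShigaShimizu1980}) $\{S^D_t\colon t\in[0,\infty)\}$ is a strongly continuous contraction semigroup on $C([0,1]^{[D]\times[K]_0},\R)$ whose generator is the closure of $G^D$ acting on $C^2([0,1]^{[D]\times[K]_0},\R)$; in particular $\psi$ lies in the domain of the generator. Second, Lemma~\ref{l:regularity_semigroup} shows that each $S^D_t$ maps $C^2([0,1]^{[D]\times[K]_0},\R)$ into itself with $\norm{S^D_t\psi}_{C^2}\leq e^{ct}\norm{\psi}_{C^2}$, so that the orbit $\{u(t,\cdot)=S^D_{T-t}\psi\colon t\in[0,T]\}$ stays in a bounded ball of $C^2([0,1]^{[D]\times[K]_0},\R)$. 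Third, feeding these two facts into Theorem~3.16 in~\textcite{Liggett2010} yields that $t\mapsto u(t,\cdot)$ is continuously differentiable as a map into $C([0,1]^{[D]\times[K]_0},\R)$ with $\partial_t u(t,\cdot)=-G^Du(t,\cdot)$ and that $u\in C^{1,2}([0,T]\times[0,1]^{[D]\times[K]_0},\R)$, which together with the trivially checked terminal condition $u(T,\cdot)=S^D_0\psi=\psi$ gives the claim.

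The only thing requiring care is the bookkeeping in the third step: one must verify that $G^D$ is a second-order differential operator with coefficients that are continuous (by Setting~\ref{set:coefficients}, indeed $C^3$) on the compact cube $[0,1]^{[D]\times[K]_0}$, so that $G^D$ maps $C^2$ continuously into $C^0$, and that Lemma~\ref{l:regularity_semigroup} delivers both the invariance of $C^2$ under the semigroup and the uniform-in-$t$ bound on $\norm{u(t,\cdot)}_{C^2}$ — the latter being exactly what is needed to upgrade the Banach-space-valued differentiability of $t\mapsto u(t,\cdot)$ to classical joint $C^{1,2}$-regularity and to read off $\partial_t u=-G^Du$ as a pointwise identity on $[0,T]\times[0,1]^{[D]\times[K]_0}$. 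There is no deeper obstacle; once this matching of hypotheses is done the lemma follows, as the statement indicates, immediately.
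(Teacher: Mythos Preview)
Your proposal is correct and matches the paper's approach exactly: the paper does not give a proof but simply states that the lemma ``follows immediately from Theorem~3.16 in~\textcite{Liggett2010} and Lemma~\ref{l:regularity_semigroup} above.'' Your write-up merely unpacks the bookkeeping behind that sentence, and there is nothing to add.
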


The following lemma shows that finitely many levels of the migration level processes
and of the loop-free processes have the same limit as
$D\to\infty$ at every fixed time point.
\begin{lemma}[Asymptotic equality for one-dimensional distributions] \label{l:replace_1di}
	Assume that \textup{Setting~\ref{set:excursion_measure}}
	and \textup{Setting~\ref{set:migration_levels}} hold, that
	\begin{equation}
		\sum_{k\in\N_0}\sup_{D\in\N}\Exp*{\sum_{i=1}^D X_0^{D,k}(i)} < \infty,
		\label{eq:replace_1di_initial2}
	\end{equation}
	that
	\begin{equation}
		\sup_{D\in\N} \Exp*{\left( \sum_{i=1}^D\sum_{k\in\N_0}X_0^{D,k}(i)
		\right)^2} < \infty,
		\label{eq:replace_1di_initial}
	\end{equation}
	that
	\begin{equation}
		\lim_{D\to\infty}\Exp*{\sum_{i=1}^D\sum_{k\in\N_0}
		X_0^{D,k}(i)\sum_{m\in\N_0\setminus\{k\}} X_0^{D,m}(i)} = 0,
		\label{eq:replace_1di_initial3}
	\end{equation}
	and that
	\begin{equation}
		\lim_{\delta\to 0}\sup_{D\in\N}\Exp*{\sum_{i=1}^D\sum_{k\in\N_0}
		\bigl(X_{0}^{D,k}(i)\wedge\delta\bigr)} = 0,
		\label{eq:replace_1di_initial4}
	\end{equation}
	let $T\in(0,\infty)$ and $K\in\N_0$,
	for every $D\in\N$ let $\psi^D\in
	C^2([0,1]^{[D]\times[K]_0},\R)$, and
	suppose that $\sup_{D\in\N}\norm{\psi^D}_{C^2} < \infty$.
	Then it holds that
	\begin{equation}
		\lim_{D\to\infty} 
		\abs*{\Exp*{\psi^D\left(\bigl(X_T^{D,k}(i)
		\bigr)_{(i,k)\in[D]\times[K]_0}\right)}
		-
		\Exp*{\psi^D\left(\bigl(Z_T^{D,k}(i)
		\bigr)_{(i,k)\in[D]\times[K]_0}\right)}}  = 0.
		\label{eq:replace_1di}
	\end{equation}
\end{lemma}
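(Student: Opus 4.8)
The plan is to use the integration-by-parts formula for the semigroup of the loop-free processes restricted to the levels $0,\dots,K$. Note first that $\{(Z^{D,k}_t(i))_{t\in[0,\infty)}\colon(i,k)\in[D]\times[K]_0\}$ is autonomous (the drift of $Z^{D,k}(i)$ involves only $Z^{D,k-1}$ and $Z^{D,k}$, and $Z^{D,-1}=0$), so for every $D\in\N$ the semigroup $\{S^D_t\colon t\in[0,\infty)\}$ and generator $G^{D}$ from~\eqref{eq:semigroup} and~\eqref{eq:generatorZ} are available, and we may set $u^{D}(t,x)\defeq(S^D_{T-t}\psi^D)(x)$ for $(t,x)\in[0,T]\times[0,1]^{[D]\times[K]_0}$; by Lemma~\ref{l:classical_solution}, $u^{D}\in C^{1,2}$, $u^{D}(T,\cdot)=\psi^D$, and $\partial_t u^{D}+G^{D}u^{D}=0$. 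First I would apply It{\^o}'s formula to $u^{D}(t,(X^{D,k}_t(i))_{(i,k)\in[D]\times[K]_0})$, reading the coefficients of $X^{D,k}(i)$ off~\eqref{eq:XD_k} (with the convention that these coefficients vanish when $\sum_{m\in\N_0}X^{D,m}_t(i)=0$). Using the backward equation to cancel $\partial_t u^{D}+G^{D}u^{D}$ yields, a.s.,
\[
\psi^D\bigl((X^{D,k}_T(i))_{(i,k)}\bigr)=(S^D_T\psi^D)\bigl((X^{D,k}_0(i))_{(i,k)}\bigr)+\int_0^T\bigl((\mathcal{A}^D_s-G^{D})u^{D}\bigr)\bigl(s,(X^{D,k}_s(i))_{(i,k)}\bigr)\,ds+M_T,
\]
where $\mathcal{A}^D_s$ is the operator obtained from $G^{D}$ by replacing its coefficients with those of~\eqref{eq:XD_k} evaluated along $X$, and $M$ collects the stochastic integrals. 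Since $\sigma^2$ is bounded and the first derivatives of $u^{D}$ are bounded by Lemma~\ref{l:regularity_semigroup}, $M$ is a true martingale, hence $\E[M_T]=0$. By the Markov property of the loop-free processes and Setting~\ref{set:migration_levels} (which gives $Z^{D,k}_0(i)=X^{D,k}_0(i)$ a.s.), $\E[(S^D_T\psi^D)((X^{D,k}_0(i))_{(i,k)})]=\E[\psi^D((Z^{D,k}_T(i))_{(i,k)})]$. Thus the left-hand side of~\eqref{eq:replace_1di} equals $\bigl|\E\bigl[\int_0^T((\mathcal{A}^D_s-G^{D})u^{D})(s,(X^{D,k}_s(i))_{(i,k)})\,ds\bigr]\bigr|$.

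Next I would expand, writing $\Sigma_i\defeq\sum_{m\in\N_0}X^{D,m}_s(i)$ (so that $X^{D,k}_s(i)\le\Sigma_i$),
\[
\bigl((\mathcal{A}^D_s-G^{D})u^{D}\bigr)\bigl(s,(X^{D,k}_s(i))_{(i,k)}\bigr)=\sum_{i=1}^D\sum_{k=0}^K\delta b^{D}_{i,k}(s)\,\partial_{x_{i,k}}u^{D}+\tfrac12\sum_{i=1}^D\sum_{k=0}^K\delta a^{D}_{i,k}(s)\,\partial^2_{x_{i,k}}u^{D},
\]
with the derivatives of $u^{D}$ evaluated at $(s,(X^{D,k}_s(i))_{(i,k)})$, where $\delta a^{D}_{i,k}(s)=\frac{X^{D,k}_s(i)}{\Sigma_i}\sigma^2(\Sigma_i)-\sigma^2(X^{D,k}_s(i))$, and $\delta b^{D}_{i,k}(s)$ is the corresponding difference of drifts: for $k\ge1$ it equals $\frac1D\sum_j X^{D,k-1}_s(j)[f(\Sigma_j,\Sigma_i)-f(X^{D,k-1}_s(j),X^{D,k}_s(i))]+[\frac{X^{D,k}_s(i)}{\Sigma_i}\hht_D(\Sigma_i)-\hht_D(X^{D,k}_s(i))]$, and for $k=0$ it is just the last bracket (the terms $\1_{k=0}h_D(0)$ cancel). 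Since $\sup_{D\in\N}\norm{\psi^D}_{C^2}<\infty$, Lemma~\ref{l:regularity_semigroup} bounds $|\partial_{x_{i,k}}u^{D}|$ and $|\partial^2_{x_{i,k}}u^{D}|$ by a constant $C\in[0,\infty)$ independent of $D$ and of the evaluation point. It therefore suffices to prove $\lim_{D\to\infty}\E\bigl[\int_0^T\sum_{i,k}(|\delta b^{D}_{i,k}(s)|+|\delta a^{D}_{i,k}(s)|)\,ds\bigr]=0$.

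The key estimate exploits that $\sigma^2(0)=0=\hht_D(0)$, Lipschitz continuity of $\sigma^2$ and $\hht_D$ (constants $L_\sigma$, $L_h$), and $X^{D,k}_s(i)\le\Sigma_i$: a short computation gives $|\delta a^{D}_{i,k}(s)|\le 2L_\sigma\frac{X^{D,k}_s(i)(\Sigma_i-X^{D,k}_s(i))}{\Sigma_i}$ and, likewise, the $\hht_D$-bracket of $\delta b^{D}_{i,k}$ is bounded by $2L_h\frac{X^{D,k}_s(i)(\Sigma_i-X^{D,k}_s(i))}{\Sigma_i}$ (with $0/0\defeq0$). For the migration bracket, $|f|\le L_f$ and $L_f$-Lipschitz continuity of $f$, summed over $i\in[D]$ and $k\in[K]$, give a bound $L_f\sum_i\Delta^{D}_i(s)+\frac{L_f}{D}(\sum_{i,k}X^{D,k}_s(i))^2$, where $\Delta^{D}_i(s)\defeq\sum_{k\in\N_0}X^{D,k}_s(i)\sum_{m\in\N_0\setminus\{k\}}X^{D,m}_s(i)$. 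Finally, the elementary inequality $\frac{x(\Sigma-x)}{\Sigma}\le\frac1\delta x(\Sigma-x)+(x\wedge\delta)$, valid for $0\le x\le\Sigma$ and $\delta>0$, yields $\sum_{i,k}\frac{X^{D,k}_s(i)(\Sigma_i-X^{D,k}_s(i))}{\Sigma_i}\le\frac1\delta\sum_i\Delta^{D}_i(s)+\sum_i\sum_{k\in\N_0}(X^{D,k}_s(i)\wedge\delta)$. Combining these, for every $\delta>0$,
\[
\E\Bigl[\int_0^T\textstyle\sum_{i,k}\bigl(|\delta b^{D}_{i,k}|+|\delta a^{D}_{i,k}|\bigr)\,ds\Bigr]\le\Bigl(L_f+\tfrac{2L_h+L_\sigma}{\delta}\Bigr)T\sup_{s\in[0,T]}\E\Bigl[\textstyle\sum_i\Delta^{D}_i(s)\Bigr]+\tfrac{L_fT}{D}\,\E\Bigl[\sup_{s\in[0,T]}\bigl(\textstyle\sum_{i,k}X^{D,k}_s(i)\bigr)^2\Bigr]+(2L_h+L_\sigma)T\,\rho(\delta),
\]
where $\rho(\delta)\defeq\sup_{D\in\N}\sup_{s\in[0,T]}\E[\sum_{i,k}(X^{D,k}_s(i)\wedge\delta)]$.

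To conclude, I would invoke Lemma~\ref{l:one_generation_per_island} (applicable by~\eqref{eq:replace_1di_initial} and~\eqref{eq:replace_1di_initial3}), which gives $\lim_{D\to\infty}\sup_{s\in[0,T]}\E[\sum_i\Delta^{D}_i(s)]=0$, and Lemma~\ref{l:X_Z_second_moment} (using~\eqref{eq:replace_1di_initial}), which bounds $\E[\sup_{s\in[0,T]}(\sum_{i,k}X^{D,k}_s(i))^2]$ uniformly in $D$; hence for every $\delta>0$, $\limsup_{D\to\infty}$ of the left-hand side of~\eqref{eq:replace_1di} is at most $C(2L_h+L_\sigma)T\,\rho(\delta)$. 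Then Lemma~\ref{l:concentration} (applicable by~\eqref{eq:replace_1di_initial2},~\eqref{eq:replace_1di_initial}, and~\eqref{eq:replace_1di_initial4}) yields $\lim_{\delta\to0}\rho(\delta)=0$, and letting $\delta\to0$ finishes the proof. I expect the main obstacle to be the diffusion-coefficient difference $\delta a^{D}_{i,k}$: it is not $L^1$-small on its own (its crude bound only gives boundedness of the total mass, not smallness), and controlling it requires precisely the two-regime split separating ``a deme carries mass on several migration levels'' — negligible by Lemma~\ref{l:one_generation_per_island} — from ``a deme carries very little mass'' — negligible, uniformly in $D$, by the concentration-of-mass estimate of Lemma~\ref{l:concentration}.
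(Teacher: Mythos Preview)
Your proposal is correct and follows essentially the same route as the paper: define $u^{D}(t,\cdot)=S^{D}_{T-t}\psi^D$, apply It\^o's formula and the backward equation to reduce~\eqref{eq:replace_1di} to the integrated generator difference, bound the derivatives of $u^{D}$ via Lemma~\ref{l:regularity_semigroup}, and control the drift/diffusion discrepancies using exactly the same $\delta$-splitting (your inequality $\tfrac{x(\Sigma-x)}{\Sigma}\le\tfrac1\delta x(\Sigma-x)+(x\wedge\delta)$ is the paper's~\eqref{eq:Lipschitz_hD}--\eqref{eq:Lipschitz_sigma2}) together with Lemmas~\ref{l:one_generation_per_island}, \ref{l:X_Z_second_moment}, and~\ref{l:concentration}. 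The only technical point the paper adds that you glossed over is extending $u^{D}$ from $[0,T]\times[0,1]^{[D]\times[K]_0}$ to an open neighbourhood via Whitney's extension theorem so that It\^o's formula applies.
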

\begin{proof}
	For every $D\in\N$ let $\{S_t^D \colon t\in[0,\infty)\}$ be as
	in~\eqref{eq:semigroup}, let $G^D$ be as in~\eqref{eq:generatorZ}, and
	define the function $u^D\colon [0,T]\times
	[0,1]^{[D]\times[K]_0} \to \R$ by
	\begin{equation}
		[0,T] \times [0,1]^{[D]\times[K]_0} \ni (t,x) \mapsto
		u^D(t,x) \defeq
		(S_{T-t}^D\psi^D)(x).
		\label{eq:uD_definition}
	\end{equation}
	Equations~\eqref{eq:uD_definition} and~\eqref{eq:semigroup} yield for all $D\in\N$ that
	\begin{equation}
		\Exp*{u^{D}\left(T,\bigl(X_T^{D,k}(i)\bigr)_{(i,k)\in[D]\times[K]_0}\right)}
		= \Exp*{\psi^D\left(\bigl(X_T^{D,k}(i)
		\bigr)_{(i,k)\in[D]\times[K]_0}\right)}
		\label{eq:weak_error_T}
	\end{equation}
	and
	\begin{equation}
		\Exp*{u^{D}\left(0,\bigl(X_0^{D,k}(i)\bigr)_{(i,k)\in[D]\times[K]_0}
		\right)}
		= \Exp*{\psi^D\left(\bigl(Z_T^{D,k}(i)
		\bigr)_{(i,k)\in[D]\times[K]_0}\right)}.
		\label{eq:weak_error_0}
	\end{equation}
	This shows that~\eqref{eq:replace_1di} is implied by
	\begin{equation}
		\lim_{D\to\infty}{\abs*{\Exp*{u^{D}\left(T,
		\bigl(X_T^{D,k}(i)\bigr)_{(i,k)\in[D]\times[K]_0} \right)} -
		\Exp*{u^{D}\left(0,
		\bigl(X_0^{D,k}(i)\bigr)_{(i,k)\in[D]\times[K]_0} \right)}}} = 0.
		\label{eq:weak_approach_toshow}
	\end{equation}
	Lemma~\ref{l:classical_solution} implies for all $D\in\N$ that $u^{D} \in
	C^{1,2}([0,T] \times [0,1]^{[D]\times[K]_0},\R)$ and for all
	$D\in\N$, all $t \in [0,T]$, and all $x\in
	[0,1]^{[D]\times[K]_0}$ that
	\begin{equation}
		\frac{\partial u^D}{\partial t}(t,x) + (G^{D} u^{D})(t,x) =
		0.
		\label{eq:PDE_u}
	\end{equation}
	For every $D\in\N$ (a small variation with different orders of
	differentiability of) Whitney's extension theorem (see, e.g., Theorem~2.3.6
	in~\textcite{Hoermander1990}) ensures
	that $u^D$ can be extended to a function in $C^{1,2}([0,\infty) \times
	\R^{[D]\times[K]_0},\R)$. Then It{\^o}'s
	formula, \eqref{eq:XD_k}, \eqref{eq:PDE_u}, \eqref{eq:generatorZ}, and
	Tonelli's theorem yield
	for all $D\in\N$ that
	\begin{equation}
		\begin{split}
			\MoveEqLeft[3]\Exp*{u^{D}\left(T,
			\bigl(X_T^{D,k}(i)\bigr)_{(i,k)\in[D]\times[K]_0} \right)} -
			\Exp*{u^{D}\left(0,
			\bigl(X_0^{D,k}(i)\bigr)_{(i,k)\in[D]\times[K]_0}
			\right)}\\
			={} & \int_0^T \E\Biggl[\sum_{i=1}^D \sum_{k=0}^K
			\tfrac{\partial u^D}{\partial x_{i,k}} \left(s,
			\bigl(X_s^{D,m}(j)\bigr)_{
			(j,m)\in[D]\times[K]_0}
			\right)\\
			&\quad \times\Biggl\{ \tfrac{1}{D}\sum_{j=1}^D
			X_s^{D,k-1}(j)\biggl(f\biggl(\sum_{m\in\N_0}X_s^{D,m}(j),
			\sum_{m\in\N_0}X_s^{D,m}(i)\biggr) - f\bigl(X_s^{D,k-1}(j),
			X_s^{D,k}(i)\bigr)\biggr)\\
			&\quad+
			\tfrac{X_s^{D,k}(i)}{\sum_{m\in\N_0}X_s^{D,m}(i)}\hht_D\biggl(\sum_{m\in\N_0}X_s^{D,m}(i)\biggr)
			- \hht_D\bigl(X_s^{D,k}(i)\bigr)\Biggr\}\\
			&+\tfrac{1}{2}\sum_{i=1}^D\sum_{k=0}^K\tfrac{\partial^2 u^D}{\partial
			x^2_{i,k}} \left(s,
			\bigl(X_s^{D,m}(j)\bigr)_{(j,m)\in[D]\times[K]_0}
			\right)\\
			&\quad\times\Biggl\{\tfrac{X_s^{D,k}(i)}{\sum_{m\in\N_0}X_s^{D,m}(i)}
			\sigma^2\biggl(\sum_{m\in\N_0}X_s^{D,m}(i)\biggr) -
			\sigma^2\bigl(X_s^{D,k}(i)\bigr)\Biggr\}\Biggr] \dd s .
		\end{split}
		\label{eq:weak_error}
	\end{equation}
	Setting~\ref{set:coefficients} implies for all $D\in\N$, all
	$(x,y)\in \{(x_1,x_2) \in [0,1]^2 \colon x_1 + x_2 \leq 1\}$, and all $\delta \in (0,1)$ that
	\begin{equation}
		\begin{split}
			\abs[\big]{\tfrac{x}{x+y}\hht_D(x+y) - \hht_D(x)}
			&\leq \tfrac{x}{x+y}\abs[\big]{\hht_D(x+y) - \hht_D(x)} +
			\tfrac{y}{x+y}\abs[\big]{\hht_D(x)}\\
			&\leq 2L_h\tfrac{xy}{x+y} \\
			&\leq 2L_h(x \wedge y)\\
			&\leq \1_{x \leq \delta}2L_h(x \wedge \delta) + \1_{x >
			\delta}2L_h y\\
			&\leq 2L_h(x \wedge \delta) + \tfrac{2L_h}{\delta} xy.
		\end{split}
		\label{eq:Lipschitz_hD}
	\end{equation}
	Analogously, Setting~\ref{set:coefficients} implies for all $(x,y)\in
	\{(x_1,x_2) \in [0,1]^2 \colon x_1 + x_2 \leq 1\}$ and all $\delta \in (0,1)$ that
	\begin{equation}
		\abs[\big]{\tfrac{x}{x+y}\sigma^2(x+y) - \sigma^2(x)}
		\leq 2L_\sigma(x \wedge \delta) + \tfrac{2L_\sigma}{\delta}xy.
		\label{eq:Lipschitz_sigma2}
	\end{equation}
	Equations~\eqref{eq:Lipschitz_hD} and~\eqref{eq:Lipschitz_sigma2} with $x =
	X_s^{D,k}(i)$ and $y = \sum_{m\in\N_0\setminus\{k\}}X_s^{D,m}(i)$,
	Setting~\ref{set:coefficients}, and~\eqref{eq:weak_error} show for all
	$\delta \in (0,1)$ and all $D\in\N$ that
	\begin{equation}
		\begin{split}
			\MoveEqLeft[3]\abs*{\Exp*{u^{D}\left(T,
			\bigl(X_T^{D,k}(i)\bigr)_{(i,k)\in[D]\times[K]_0} \right)} -
			\Exp*{u^{D}\left(0,
			\bigl(X_0^{D,k}(i)\bigr)_{(i,k)\in[D]\times[K]_0}
			\right)}}\\
			\leq {} &\sup_{t\in[0,T]}\norm{u^{D}(t,\cdot)}_{C^2}T
			\left( L_f \sup_{t\in[0,T]}
			\Exp*{\sum_{j=1}^D\sum_{k\in\N_0}X_t^{D,k-1}(j)
			\sum_{m\in\N_0\setminus\{k-1\}}X_t^{D,m}(j)}\right.\\
			&+
			\frac{L_f}{D}\Exp*{ \sup_{t\in[0,T]} \left(\sum_{i=1}^D\sum_{k\in\N_0}
			X_t^{D,k}(i)\right)^2}
			+
			(2L_h + L_\sigma)\sup_{t\in[0,T]}
			\Exp*{\sum_{i=1}^D\sum_{k\in\N_0}\bigl(X_t^{D,k}(i)\wedge \delta\bigr)}\\
			&\left.+ \frac{2L_h + L_\sigma}{\delta}\sup_{t\in[0,T]}
			\Exp*{\sum_{i=1}^D\sum_{k\in\N_0}X_t^{D,k}(i)\sum_{m\in\N_0\setminus\{k\}}X_t^{D,m}(i)}
			\right).
		\end{split}
		\label{eq:uD_convergence_estimate}
	\end{equation}
	Lemma~\ref{l:regularity_semigroup} and
	$\sup_{D\in\N}\norm{\psi^D}_{C^2} < \infty$ imply that
	$\sup_{D\in\N}\sup_{t\in[0,T]}\norm{u^D(t,\cdot)}_{C^2} < \infty$. The first
	and the fourth summand on the right-hand side
	of~\eqref{eq:uD_convergence_estimate} converge to zero as $D\to\infty$ by
	Lemma~\ref{l:one_generation_per_island} and
	assumptions~\eqref{eq:replace_1di_initial}
	and~\eqref{eq:replace_1di_initial3}. The second summand on the right-hand
	side of~\eqref{eq:uD_convergence_estimate} converges to zero as
	$D\to\infty$ by Lemma~\ref{l:X_Z_second_moment} and
	assumption~\eqref{eq:replace_1di_initial}. The third summand on the
	right-hand side of~\eqref{eq:uD_convergence_estimate} converges to
	zero uniformly in $D\in\N$ as $\delta \to 0$ by Lemma~\ref{l:concentration}
	and assumptions \eqref{eq:replace_1di_initial2},
	\eqref{eq:replace_1di_initial}, and \eqref{eq:replace_1di_initial4}.
	By letting first $D\to\infty$ and then $\delta \to
	0$,~\eqref{eq:weak_approach_toshow} follows.
	This finishes the proof of Lemma~\ref{l:replace_1di}.
\end{proof}
The following lemma shows that in the situation of Setting~\ref{set:initial_dist_level0},
the assumptions of Lemma~\ref{l:one_generation_per_island} and
Lemma~\ref{l:concentration} and some of the assumptions in
Lemma~\ref{l:replace_1di} are satisfied.
\begin{lemma}[Well-behaved initial distribution] \label{l:initial_dist_level0}
	Assume that \textup{Setting~\ref{set:initial_dist_level0}} holds. Then it
	holds that
	\begin{equation}
		\sum_{k\in\N_0}\sup_{D\in\N}\Exp*{\sum_{i=1}^D X^{D,k}_0(i)} < \infty,
		\label{eq:initial_dist_ass1}
	\end{equation}
	that
	\begin{equation}
		\sup_{D\in\N}\Exp*{\left(\sum_{i=1}^D\sum_{k\in\N_0}
		X_0^{D,k}(i)\right)^2} < \infty,
		\label{eq:initial_dist_ass2}
	\end{equation}
	that
	\begin{equation}
		\lim_{D\to\infty} \Exp*{\sum_{i=1}^D\sum_{k\in\N_0}
		X_0^{D,k}(i)\sum_{m\in\N_0\setminus\{k\}} X_0^{D,m}(i)} = 0,
		\label{eq:initial_dist_ass3}
	\end{equation}
	and that
	\begin{equation}
		\lim_{\delta\to 0}\sup_{D\in\N}\Exp*{\sum_{i=1}^D\sum_{k\in\N_0}
		\bigl(X_{0}^{D,k}(i)\wedge\delta\bigr)} = 0.
		\label{eq:initial_dist_ass4}
	\end{equation}
\end{lemma}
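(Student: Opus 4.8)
The plan is to exploit that Setting~\ref{set:initial_dist_level0} places the entire initial population on migration level zero. Indeed, since $\MCL(X_0^{D,k}(i)) = \delta_0$ for every $(i,k)\in[D]\times\N$, we have a.s.\ that $X_0^{D,k}(i) = 0$ for all such $(i,k)$, so that a.s.\ $\sum_{k\in\N_0}X_0^{D,k}(i) = X_0^{D,0}(i)$, and, by the distributional hypotheses in Setting~\ref{set:initial_dist_level0}, $(X_0^{D,0}(i))_{i\in[D]}$ has the same distribution as $(X_0(i))_{i\in[D]}$. This collapses every sum over migration levels in~\eqref{eq:initial_dist_ass1}--\eqref{eq:initial_dist_ass4} to its $k=0$ term, reducing all four assertions to elementary moment estimates for the nonnegative and a.s.\ summable family $(X_0(i))_{i\in\N}$, which are then read off from~\eqref{eq:initial_second_moment}. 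As a preliminary I would record that~\eqref{eq:initial_second_moment} implies $\Exp*{\sum_{i=1}^\infty X_0(i)} < \infty$, using that $z \leq 1 + z^2$ for every $z\in[0,\infty)$.

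Given this reduction, \eqref{eq:initial_dist_ass1} follows from Tonelli's theorem, which gives $\sup_{D\in\N}\Exp*{\sum_{i=1}^D X_0^{D,0}(i)} = \sum_{i=1}^\infty \Exp{X_0(i)} = \Exp*{\sum_{i=1}^\infty X_0(i)} < \infty$. For~\eqref{eq:initial_dist_ass2} I would use $0 \leq \sum_{i=1}^D X_0(i) \leq \sum_{i=1}^\infty X_0(i)$ together with~\eqref{eq:initial_second_moment} to obtain $\sup_{D\in\N}\Exp*{\bigl(\sum_{i=1}^D X_0^{D,0}(i)\bigr)^2} \leq \Exp*{\bigl(\sum_{i=1}^\infty X_0(i)\bigr)^2} < \infty$. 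Finally, \eqref{eq:initial_dist_ass3} is immediate: whenever $X_0^{D,k}(i)$ is nonzero one necessarily has $k = 0$, and then $\sum_{m\in\N_0\setminus\{0\}}X_0^{D,m}(i) = 0$ a.s., so the random variable under the expectation in~\eqref{eq:initial_dist_ass3} vanishes almost surely for every $D\in\N$.

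It remains to treat~\eqref{eq:initial_dist_ass4}. Using $0\wedge\delta = 0$, the level collapse above, and Tonelli's theorem, one gets for every $\delta\in(0,\infty)$ that $\sup_{D\in\N}\Exp*{\sum_{i=1}^D\sum_{k\in\N_0}\bigl(X_0^{D,k}(i)\wedge\delta\bigr)} = \Exp*{\sum_{i=1}^\infty\bigl(X_0(i)\wedge\delta\bigr)}$, so it suffices to let $\delta\to0$ on the right-hand side. Here the dominating function $\sum_{i=1}^\infty X_0(i)$ is a.s.\ finite by Setting~\ref{set:initial_moment} and integrable by the preliminary step above, and for almost every $\omega$ one has $\sum_{i=1}^\infty\bigl(X_0(i)\wedge\delta\bigr)\to 0$ as $\delta\to 0$ (given $\eps>0$, choose $N$ with $\sum_{i>N}X_0(i)<\eps/2$; then $\sum_{i=1}^N\bigl(X_0(i)\wedge\delta\bigr)\leq N\delta<\eps/2$ for $\delta$ small enough), so the dominated convergence theorem yields~\eqref{eq:initial_dist_ass4}. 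The whole argument is bookkeeping; the one place that needs a little care is this last passage $\delta\to0$, where one must exhibit an integrable dominating function and verify the almost sure convergence of the truncated series, both of which rest on the a.s.\ summability of $(X_0(i))_{i\in\N}$ from Setting~\ref{set:initial_moment} and the finite first moment extracted from~\eqref{eq:initial_second_moment}.
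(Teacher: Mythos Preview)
Your proposal is correct and follows exactly the approach of the paper's proof: collapse everything to migration level $k=0$ using the structure of Setting~\ref{set:initial_dist_level0}, read off \eqref{eq:initial_dist_ass1}--\eqref{eq:initial_dist_ass3} directly from~\eqref{eq:initial_second_moment}, and obtain \eqref{eq:initial_dist_ass4} via the dominated convergence theorem. You have simply written out in full the details that the paper compresses into two sentences.
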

\begin{proof}
	Equations \eqref{eq:initial_dist_ass1}, \eqref{eq:initial_dist_ass2}, and
	\eqref{eq:initial_dist_ass3} follow immediately from the structure of the
	initial distribution given in Setting~\ref{set:initial_dist_level0} and
	from~\eqref{eq:initial_second_moment}.
	Moreover, Setting~\ref{set:initial_dist_level0} and the dominated convergence
	theorem show that
	\begin{equation}
		\lim_{\delta\to 0}\sup_{D\in\N}\Exp*{\sum_{i=1}^D\sum_{k\in\N_0}
		\bigl(X_{0}^{D,k}(i)\wedge\delta\bigr)} = \lim_{\delta\to
		0}\Exp*{\sum_{i=1}^\infty \left(X_0(i) \wedge \dl\right)} = 0.
	\end{equation}
	This completes the proof of Lemma~\ref{l:initial_dist_level0}.
\end{proof}
The following lemma uses the Markov property in order to generalize
Lemma~\ref{l:replace_1di} to finitely many time points.
\begin{lemma}[Asymptotic equality for f.d.d.s] \label{l:replace_induction}
	Assume that \textup{Setting~\ref{set:initial_dist_level0}} holds, let
	$K\in\N_0$, let $t_1,t_2,\ldots \in [0,\infty)$ with $t_1 < t_2 <
	\dotsb$, for all $D,j\in\N$ let
	$\psi^D_j\in C^2([0,1]^{[D]\times[K]_0},\R)$, and assume
	for all $j\in\N$ that $\sup_{D\in\N}\norm{\psi^D_j}_{C^2} < \infty$.
	Then it holds for all $n\in\N$ that
	\begin{equation}
		\lim_{D\to\infty} \abs*{
		\Exp*{\prod_{j=1}^n\psi^D_j\left(\bigl(X_{t_j}^{D,k}(i)
		\bigr)_{(i,k)\in[D]\times[K]_0}\right)}
		-
		\Exp*{\prod_{j=1}^n\psi^D_j\left(\bigl(Z_{t_j}^{D,k}(i)
		\bigr)_{(i,k)\in[D]\times[K]_0}\right)}} = 0.
		\label{eq:replace_fidis}
	\end{equation}
\end{lemma}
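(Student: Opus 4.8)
The plan is to prove~\eqref{eq:replace_fidis} by induction on $n\in\N$, in each step removing the last time point with the help of the Markov property of the loop-free processes. Throughout, for $D\in\N$ and $t\in[0,\infty)$ I abbreviate $\mathbf X^D_t\defeq(X_t^{D,k}(i))_{(i,k)\in[D]\times[K]_0}$ and $\mathbf Z^D_t\defeq(Z_t^{D,k}(i))_{(i,k)\in[D]\times[K]_0}$, and I let $\{S^D_t\colon t\in[0,\infty)\}$ and $G^D$ be as in~\eqref{eq:semigroup} and~\eqref{eq:generatorZ}. \emph{Base case $n=1$:} if $t_1=0$ then $\mathbf X^D_0=\mathbf Z^D_0$ almost surely by Setting~\ref{set:migration_levels}, so~\eqref{eq:replace_fidis} holds with value $0$; if $t_1>0$ then Lemma~\ref{l:initial_dist_level0} shows that the hypotheses~\eqref{eq:replace_1di_initial2}--\eqref{eq:replace_1di_initial4} of Lemma~\ref{l:replace_1di} are satisfied, and Lemma~\ref{l:replace_1di} applied with $T=t_1$ and $\psi^D=\psi^D_1$ yields~\eqref{eq:replace_fidis} for $n=1$.

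\emph{Induction step $n\to n+1$:} fix $t_1<\dotsb<t_{n+1}$ and functions $\psi^D_j$ as in the statement and set $\Phi^D\defeq\psi^D_n\cdot(S^D_{t_{n+1}-t_n}\psi^D_{n+1})$. Lemma~\ref{l:regularity_semigroup} gives $\norm{S^D_{t_{n+1}-t_n}\psi^D_{n+1}}_{C^2}\le e^{c(t_{n+1}-t_n)}\norm{\psi^D_{n+1}}_{C^2}$, so the product rule and the boundedness of all factors on $[0,1]^{[D]\times[K]_0}$ give $\sup_{D\in\N}\norm{\Phi^D}_{C^2}<\infty$. Since the loop-free system $(Z^{D,k}(i))_{(i,k)\in[D]\times[K]_0}$ of~\eqref{eq:ZD_k} is closed and hence Markov with transition semigroup $\{S^D_t\colon t\in[0,\infty)\}$, the Markov property at time $t_n$ gives for every $D\in\N$ that
\[
	\Exp*{\prod_{j=1}^{n+1}\psi^D_j(\mathbf Z^D_{t_j})}=\Exp*{\prod_{j=1}^{n-1}\psi^D_j(\mathbf Z^D_{t_j})\,\Phi^D(\mathbf Z^D_{t_n})}.
\]
Applying the induction hypothesis to the $n$ time points $t_1<\dotsb<t_n$ and to the functions $\psi^D_1,\dotsc,\psi^D_{n-1},\Phi^D$ (whose $C^2$-norms are uniformly bounded in $D$) shows that the right-hand side equals $\Exp*{\prod_{j=1}^{n}\psi^D_j(\mathbf X^D_{t_j})\,(S^D_{t_{n+1}-t_n}\psi^D_{n+1})(\mathbf X^D_{t_n})}$ up to an error that tends to $0$ as $D\to\infty$. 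Hence it suffices to prove that
\[
	\lim_{D\to\infty}\abs*{\Exp*{\prod_{j=1}^{n}\psi^D_j(\mathbf X^D_{t_j})\bigl((S^D_{t_{n+1}-t_n}\psi^D_{n+1})(\mathbf X^D_{t_n})-\psi^D_{n+1}(\mathbf X^D_{t_{n+1}})\bigr)}}=0.
\]

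To establish this I would run the integration-by-parts argument from the proof of Lemma~\ref{l:replace_1di}, now localized to the interval $[t_n,t_{n+1}]$ and performed conditionally on the $\sigma$-algebra $\mathcal F_{t_n}$ generated by the initial values and the driving Brownian motions up to time $t_n$. For $s\in[t_n,t_{n+1}]$ put $w^D_s\defeq S^D_{t_{n+1}-s}\psi^D_{n+1}$; by Lemma~\ref{l:classical_solution} (with $T$ there replaced by $t_{n+1}-t_n$ and the time variable shifted by $t_n$) the map $(s,x)\mapsto w^D_s(x)$ lies in $C^{1,2}$ and solves $\partial_s w^D_s+G^D w^D_s=0$ with terminal condition $w^D_{t_{n+1}}=\psi^D_{n+1}$. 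Extending $w^D$ to a $C^{1,2}$-function on $[t_n,\infty)\times\R^{[D]\times[K]_0}$ by Whitney's extension theorem exactly as in the proof of Lemma~\ref{l:replace_1di} and applying It\^o's formula to $s\mapsto w^D_s(\mathbf X^D_s)$ along~\eqref{eq:XD_k}, the identity $\partial_s w^D_s+G^D w^D_s=0$ together with the fact that the arising stochastic integral is a martingale with bounded integrand gives
\[
	\Exp[\big]{\psi^D_{n+1}(\mathbf X^D_{t_{n+1}})\given\mathcal F_{t_n}}-(S^D_{t_{n+1}-t_n}\psi^D_{n+1})(\mathbf X^D_{t_n})=\Exp[\big]{\smallint_{t_n}^{t_{n+1}}R^D_s\dd s\given\mathcal F_{t_n}},
\]
where $R^D_s$ is precisely the loop-correction integrand appearing in~\eqref{eq:weak_error} with $u^D$ there replaced by $w^D_s$. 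Multiplying this identity by $\prod_{j=1}^n\psi^D_j(\mathbf X^D_{t_j})$, which is $\mathcal F_{t_n}$-measurable and bounded by $\prod_{j=1}^n\sup_{D\in\N}\norm{\psi^D_j}_{C^2}$, and taking expectations bounds the expression inside the limit above by $(\prod_{j=1}^n\sup_{D\in\N}\norm{\psi^D_j}_{C^2})\int_{t_n}^{t_{n+1}}\Exp{\abs{R^D_s}}\dd s$. Estimating $\abs{R^D_s}$ by $\sup_{s\in[t_n,t_{n+1}]}\norm{w^D_s}_{C^2}$ times the loop terms via Setting~\ref{set:coefficients} and the elementary inequalities~\eqref{eq:Lipschitz_hD} and~\eqref{eq:Lipschitz_sigma2} exactly as in~\eqref{eq:uD_convergence_estimate}, using $\sup_{D\in\N}\sup_{s\in[t_n,t_{n+1}]}\norm{w^D_s}_{C^2}<\infty$ from Lemma~\ref{l:regularity_semigroup}, and noting that Lemma~\ref{l:initial_dist_level0} makes Lemma~\ref{l:one_generation_per_island}, Lemma~\ref{l:X_Z_second_moment}, and Lemma~\ref{l:concentration} applicable (with $T\defeq t_{n+1}$), one obtains by first letting $D\to\infty$ and then $\delta\to0$ that $\int_{t_n}^{t_{n+1}}\Exp{\abs{R^D_s}}\dd s\to0$. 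Combining the three displayed relations completes the induction step, and hence the proof.

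The step I expect to be the main obstacle is this last one: faithfully reproducing the error control of Lemma~\ref{l:replace_1di} in the conditional and localized setting --- in particular, justifying that It\^o's formula and the Kolmogorov backward equation~\eqref{eq:PDE_JansonTysk} of the loop-free semigroup may be invoked even though, in~\eqref{eq:XD_k}, the coefficients of the levels in $[K]_0$ involve the full sum $\sum_{m\in\N_0}X^{D,m}_s(i)$ over all migration levels, and checking that every loop-correction term still vanishes uniformly against the additional bounded weight $\prod_{j=1}^n\psi^D_j(\mathbf X^D_{t_j})$.
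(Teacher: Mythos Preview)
Your proof is correct and follows essentially the same route as the paper: define $\Phi^D=\psi^D_n\cdot S^D_{t_{n+1}-t_n}\psi^D_{n+1}$, apply the induction hypothesis to $\psi^D_1,\dotsc,\psi^D_{n-1},\Phi^D$, use the Markov property of $Z$, and then handle the remaining $X$-term by the integration-by-parts estimate from Lemma~\ref{l:replace_1di} on $[t_n,t_{n+1}]$. The only cosmetic difference is that the paper packages this last step as ``apply Lemma~\ref{l:replace_1di} with initial distribution $(X^{D,k}_{t_n}(i))_{(i,k)}$ and verify its hypotheses via Lemmas~\ref{l:essentially_finitely_many_generations}, \ref{l:X_Z_second_moment}, \ref{l:one_generation_per_island}, \ref{l:concentration}'', whereas you spell out the conditional It\^o computation explicitly; your worry about the coefficients depending on $\sum_{m\in\N_0}X^{D,m}_s(i)$ is exactly what is already absorbed into the loop-correction integrand $R^D_s$ in~\eqref{eq:weak_error}, so no new difficulty arises.
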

\begin{proof}
	We prove~\eqref{eq:replace_fidis} by induction on $n\in\N$.
	The base case $n=1$ has been settled in Lemma~\ref{l:replace_1di},
	where the conditions \eqref{eq:replace_1di_initial2},
	\eqref{eq:replace_1di_initial}, \eqref{eq:replace_1di_initial3}, and
	\eqref{eq:replace_1di_initial4} are satisfied due to Lemma~\ref{l:initial_dist_level0}.
	To show the induction step $\N \ni n \to n+1$,
	for every $D\in\N$ we define the function $\psi^D\colon
	[0,1]^{[D]\times[K]_0} \to \R$ by
	\begin{equation}
		[0,1]^{[D]\times[K]_0} \ni x \mapsto \psi^D(x) \defeq
		\psi^D_n(x)\Exp*{\psi^D_{n+1}\left(\bigl(Z_{t_{n+1}-t_n}^{D,k,x}(i)
		\bigr)_{(i,k)\in[D]\times[K]_0}\right)}.
	\end{equation}
	Then Lemma~\ref{l:regularity_semigroup} proves for every $D\in\N$ that $\psi^D\in
	C^2([0,1]^{[D]\times[K]_0},\R)$.
	Moreover, it follows from Lemma~\ref{l:regularity_semigroup} that
	$\sup_{D\in\N}\norm{\psi^D}_{C^2} < \infty$.
	Therefore, the induction hypothesis (applied to
	$\psi^D_1,\dotsc,\psi^D_{n-1}, \psi^D$) yields that
	\begin{equation}
		\begin{split}
			&\lim_{D\to\infty} \left\vert
			\Exp*{\prod_{j=1}^{n-1}\psi^D_j\left(\bigl(X_{t_j}^{D,k}(i)
			\bigr)_{(i,k)\in[D]\times[K]_0}\right)\psi^D\left(\bigl(X_{t_n}^{D,k}(i)
			\bigr)_{(i,k)\in[D]\times[K]_0}\right)} \right.\\
			&\qquad\quad-
			\left.\Exp*{\prod_{j=1}^{n-1}\psi^D_j\left(\bigl(Z_{t_j}^{D,k}(i)
			\bigr)_{(i,k)\in[D]\times[K]_0}\right)\psi^D\left(\bigl(Z_{t_n}^{D,k}(i)
			\bigr)_{(i,k)\in[D]\times[K]_0}\right)} \right\vert
			= 0.
		\end{split}
		\label{eq:fidis_indhypo}
	\end{equation}
	By the Markov property it holds for all $D\in\N$ that
	\begin{equation}
		\begin{split}
			\MoveEqLeft
			\Exp*{\prod_{j=1}^{n-1}\psi^D_j\left(\bigl(Z_{t_j}^{D,k}(i)
			\bigr)_{(i,k)\in[D]\times[K]_0}\right)\psi^D\left(\bigl(Z_{t_n}^{D,k}(i)
			\bigr)_{(i,k)\in[D]\times[K]_0}\right)}\\
			&= \Exp*{\prod_{j=1}^{n+1}\psi^D_j\left(\bigl(Z_{t_j}^{D,k}(i)
			\bigr)_{(i,k)\in[D]\times[K]_0}\right)}.
		\end{split}
	\end{equation}
	Moreover, we observe for all $D\in\N$ that
	\begin{equation}
		\begin{split}
			\MoveEqLeft\Exp*{\prod_{j=1}^{n-1}\psi^D_j\left(\bigl(X_{t_j}^{D,k}(i)
			\bigr)_{(i,k)\in[D]\times[K]_0}\right)\psi^D\left(\bigl(X_{t_n}^{D,k}(i)
			\bigr)_{(i,k)\in[D]\times[K]_0}\right)}\\
			&=
			\E\left[\prod_{j=1}^{n}\psi^D_j\left(\bigl(X_{t_j}^{D,k}(i)
			\bigr)_{(i,k)\in[D]\times[K]_0}\right)\right.\\
			&\quad \left.\phantom{\prod_{j=1}^n}\times\!\left.\Exp*{\psi^D_{n+1}\left(\bigl(Z_{t_{n+1}-t_{n}}^{D,k,x}(i)
			\bigr)_{(i,k)\in[D]\times[K]_0}\right)}
			\right\rvert_{x =
			(X_{t_n}^{D,k}(i))_{(i,k)\in[D]\times\N_0}}\right].
		\end{split}
	\end{equation}
	When the initial distribution is
	given by $(X_{t_n}^{D,k}(i))_{(i,k)\in[D]\times\N_0}$,
	the conditions \eqref{eq:replace_1di_initial2}, \eqref{eq:replace_1di_initial},
	\eqref{eq:replace_1di_initial3}, and \eqref{eq:replace_1di_initial4} are fulfilled due to
	Lemma~\ref{l:initial_dist_level0},
	Lemma~\ref{l:essentially_finitely_many_generations},
	Lemma~\ref{l:X_Z_second_moment},
	Lemma~\ref{l:one_generation_per_island}, and Lemma~\ref{l:concentration}.
	Therefore, Lemma~\ref{l:replace_1di} implies that
	\begin{equation}
		\begin{split}
			&\lim_{D\to\infty} \left\vert
			\E\left[\prod_{j=1}^{n}\psi^D_j\left(\bigl(X_{t_j}^{D,k}(i)
			\bigr)_{(i,k)\in[D]\times[K]_0}\right)\right.\right.\\
			&\qquad \left.\phantom{\prod_{j=1}^n}\times\!\left.\Exp*{\psi^D_{n+1}\left(\bigl(Z_{t_{n+1}-t_{n}}^{D,k,x}(i)
			\bigr)_{(i,k)\in[D]\times[K]_0}\right)}
			\right\rvert_{x =
			(X_{t_n}^{D,k}(i))_{(i,k)\in[D]\times\N_0}}\right]\\
			&\quad-
			\E\left[\prod_{j=1}^{n}\psi^D_j\left(\bigl(X_{t_j}^{D,k}(i)
			\bigr)_{(i,k)\in[D]\times[K]_0}\right)\right.\\
			&\qquad \left.\left.\phantom{\prod_{j=1}^n}\times\!\left.\Exp*{\psi^D_{n+1}\left(\bigl(X_{t_{n+1}-t_{n}}^{D,k,x}(i)
			\bigr)_{(i,k)\in[D]\times[K]_0}\right)}
			\right\rvert_{x =
			(X_{t_n}^{D,k}(i))_{(i,k)\in[D]\times\N_0}}\right]\right\vert
			= 0.
		\end{split}
	\end{equation}
	The Markov property yields for all $D\in\N$ that
	\begin{equation}
		\begin{split}
			\MoveEqLeft\E\left[\prod_{j=1}^{n}\psi^D_j\left(\bigl(X_{t_j}^{D,k}(i)
			\bigr)_{(i,k)\in[D]\times[K]_0}\right)
			\left.\Exp*{\psi^D_{n+1}\left(\bigl(X_{t_{n+1}-t_{n}}^{D,k,x}(i)
			\bigr)_{(i,k)\in[D]\times[K]_0}\right)}
			\right\rvert_{x =
			(X_{t_n}^{D,k}(i))_{(i,k)\in[D]\times\N_0}}\right]\\
			&=
			\Exp*{\prod_{j=1}^{n+1}\psi^D_j\left(\bigl(X_{t_j}^{D,k}(i)
			\bigr)_{(i,k)\in[D]\times[K]_0}\right)}.
		\end{split}
		\label{eq:fidis_markovX}
	\end{equation}
	Combining~\eqref{eq:fidis_indhypo} through~\eqref{eq:fidis_markovX} proves
	the induction step $\N \ni n \to n+1$ and hence finishes the proof of
	Lemma~\ref{l:replace_induction}.
\end{proof}
The following lemma is the main result of
Subsection~\ref{ss:reduction_loop_free} and shows that the migration level processes
and the loop-free processes have the same limit as
$D\to\infty$.
\begin{lemma}[Migration level and loop-free processes have the same limit]
	\label{l:replace_fidi}
	Assume that \textup{Setting~\ref{set:initial_dist_level0}} holds, let
	$n\in\N$, let $\phi_1,\dotsc,\phi_n \in C^2([0,1],\R)$ with the property
	that for all $j\in[n]$ it holds that $\phi_j(0) =
	0$, let $\psi \in C^2_b(\R,\R)$, and let $t_1,\dotsc,t_n \in [0,\infty)$
	with $t_1 < \dotsb < t_n$.  Then it
	holds that
	\begin{equation}
		\lim_{D\to\infty} \abs*{ \Exp*{\prod_{j=1}^n\psi\left(
		\sum_{i=1}^D\sum_{k\in\N_0} \phi_j\bigl(
		X_{t_j}^{D,k}(i) \bigr) \right)}
		-
		\Exp*{\prod_{j=1}^n\psi\left(
		\sum_{i=1}^D\sum_{k\in\N_0} \phi_j\bigl(
		Z_{t_j}^{D,k}(i) \bigr) \right)}} = 0.
		\label{eq:replace_fidi}
	\end{equation}
\end{lemma}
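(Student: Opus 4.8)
The plan is to reduce \eqref{eq:replace_fidi} to Lemma~\ref{l:replace_induction} by truncating the sums over the migration levels $k\in\N_0$ to the finitely many levels $k\in[K]_0$ and then controlling the resulting truncation error uniformly in $D\in\N$ by means of Lemma~\ref{l:essentially_finitely_many_generations}. Set $T\defeq t_n$. Since $\psi\in C^2_b(\R,\R)$, the function $\psi$ is bounded and globally Lipschitz continuous, and since for every $j\in[n]$ we have $\phi_j\in C^2([0,1],\R)$ with $\phi_j(0)=0$, there is $L_{\phi_j}\in[0,\infty)$ with $\abs{\phi_j(x)}\leq L_{\phi_j}x$ for all $x\in[0,1]$. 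A telescoping estimate for the product together with the boundedness and Lipschitz continuity of $\psi$ and the bound $\abs*{\sum_{i=1}^D\sum_{k>K}\phi_j(X_{t_j}^{D,k}(i))}\leq L_{\phi_j}\sum_{i=1}^D\sum_{k>K}X_{t_j}^{D,k}(i)$ then yields, for all $D,K\in\N$,
\begin{equation*}
	\abs*{\Exp*{\prod_{j=1}^n\psi\biggl(\sum_{i=1}^D\sum_{k\in\N_0}\phi_j\bigl(X_{t_j}^{D,k}(i)\bigr)\biggr)} - \Exp*{\prod_{j=1}^n\psi\biggl(\sum_{i=1}^D\sum_{k=0}^K\phi_j\bigl(X_{t_j}^{D,k}(i)\bigr)\biggr)}} \leq C\sum_{j=1}^n L_{\phi_j}\,\Exp*{\sum_{i=1}^D\sum_{k>K}X_{t_j}^{D,k}(i)},
\end{equation*}
where $C\in[0,\infty)$ depends only on $n$, $\norm{\psi}_\infty$, and the Lipschitz constant of $\psi$, and the same estimate holds with $X_{t_j}^{D,k}(i)$ replaced by $Z_{t_j}^{D,k}(i)$.

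Next I would make the right-hand side of the last display small, uniformly in $D$. By Lemma~\ref{l:initial_dist_level0} (in particular~\eqref{eq:initial_dist_ass1}) the hypothesis~\eqref{eq:essentially_finitely_many_generations_ass} of Lemma~\ref{l:essentially_finitely_many_generations} is satisfied, and by Setting~\ref{set:migration_levels} it holds a.s.\ that $Z_0^{D,k}(i)=X_0^{D,k}(i)$. Hence Lemma~\ref{l:essentially_finitely_many_generations} applies to both the migration level processes and the loop-free processes and gives $\sum_{k\in\N_0}\sup_{D\in\N}\sup_{t\in[0,T]}\Exp*{\sum_{i=1}^D X_t^{D,k}(i)}<\infty$ and the analogous bound for $Z$. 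Estimating $\Exp*{\sum_{i=1}^D\sum_{k>K}X_{t_j}^{D,k}(i)}$ by the tail $\sum_{k>K}\sup_{D\in\N}\sup_{t\in[0,T]}\Exp*{\sum_{i=1}^D X_t^{D,k}(i)}$ (and likewise for $Z$), it follows that the right-hand sides of the two truncation estimates converge to zero as $K\to\infty$, uniformly in $D\in\N$. Thus, given $\eps>0$, I can fix $K\in\N_0$ so that for all $D\in\N$ both truncation errors are $<\eps/3$.

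Finally, with $K$ fixed, for every $D\in\N$ and $j\in[n]$ I would define $\psi_j^D\colon[0,1]^{[D]\times[K]_0}\to\R$ by $\psi_j^D(x)\defeq\psi(\sum_{i=1}^D\sum_{k=0}^K\phi_j(x_{i,k}))$ and extend this trivially (e.g.\ by constant functions) for $j>n$. The chain rule together with $\psi\in C^2_b(\R,\R)$ and $\phi_j\in C^2([0,1],\R)$ shows that $\psi_j^D\in C^2([0,1]^{[D]\times[K]_0},\R)$ and, since the partial derivatives of $\psi_j^D$ of order $\leq 2$ are finite sums of products of $\psi'$ or $\psi''$ evaluated at the inner sum with $\phi_j'$ or $\phi_j''$, that $\sup_{D\in\N}\norm{\psi_j^D}_{C^2}<\infty$ for every $j$. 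Since $t_1<\dotsb<t_n$, Lemma~\ref{l:replace_induction} then yields
\begin{equation*}
	\lim_{D\to\infty}\abs*{\Exp*{\prod_{j=1}^n\psi_j^D\bigl((X_{t_j}^{D,k}(i))_{(i,k)\in[D]\times[K]_0}\bigr)} - \Exp*{\prod_{j=1}^n\psi_j^D\bigl((Z_{t_j}^{D,k}(i))_{(i,k)\in[D]\times[K]_0}\bigr)}} = 0,
\end{equation*}
so this difference is $<\eps/3$ for all sufficiently large $D$. Combining this with the two truncation estimates via the triangle inequality shows that the quantity under the limit in~\eqref{eq:replace_fidi} is eventually $<\eps$, and since $\eps>0$ is arbitrary, \eqref{eq:replace_fidi} follows. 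The only genuine obstacle is the passage from the finitely many migration levels handled by Lemma~\ref{l:replace_induction} to all of $\N_0$; this is exactly what the uniform-in-$D$ tail bound of Lemma~\ref{l:essentially_finitely_many_generations} is designed to overcome, which is the reason that lemma places the supremum over $D$ inside the summation over $k$.
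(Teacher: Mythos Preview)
Your proposal is correct and follows essentially the same approach as the paper: truncate the sum over migration levels to $k\in[K]_0$, control the truncation error uniformly in $D$ via Lemma~\ref{l:essentially_finitely_many_generations} (whose hypothesis is verified through Lemma~\ref{l:initial_dist_level0}), define $\psi_j^D(x)=\psi(\sum_{i,k}\phi_j(x_{i,k}))$, check the uniform $C^2$-bound by the chain and product rules, and then invoke Lemma~\ref{l:replace_induction}. The paper proceeds in exactly this order with the same estimates; your version is in fact slightly more careful in noting that Lemma~\ref{l:replace_induction} is stated for an infinite sequence of times and functions, which you handle by trivially extending beyond $j=n$.
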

\begin{proof}
	In a first step, we are going to reduce the considerations to
	$k\in[K]_0$ for finite $K\in\N_0$.
	The assumptions on $\phi_1,\dotsc,\phi_n$, and $\psi$ imply the
	existence of constants $L_\phi, L_\psi \in [0,\infty)$ such that it holds for all
	$j\in[n]$ and all $x\in [0,1]$ that
	$\abs{\phi_j(x)} \leq L_\phi x$ and for all $x_1,\dotsc,x_n \in \R$ and all
	$y_1,\dotsc,y_n \in \R$ that $\abs{\prod_{j=1}^n \psi(x_j) -
	\prod_{j=1}^n \psi(y_j)} \leq L_\psi \sum_{j=1}^n\abs{x_j-y_j}$.
	It follows for all $K\in\N_0$ that
	\begin{equation}
		\begin{split}
			\MoveEqLeft\sup_{D\in\N}\,\abs*{\Exp*{\prod_{j=1}^n\psi\left(
			\sum_{i=1}^D\sum_{k\in\N_0} \phi_j\bigl( X_{t_j}^{D,k}(i) \bigr) \right)}
			- \Exp*{\prod_{j=1}^n\psi\left( \sum_{i=1}^D\sum_{k=0}^K \phi_j\bigl(
			X_{t_j}^{D,k}(i) \bigr) \right)} } \\
			&\leq L_\psi L_\phi \sum_{j=1}^n\sum_{k=K+1}^\infty
			\sup_{D\in\N}\Exp*{\sum_{i=1}^D X_{t_j}^{D,k}(i)}.
		\end{split}
		\label{eq:ess_fidi_X}
	\end{equation}
	The right-hand side of~\eqref{eq:ess_fidi_X}
	converges to zero as $K\to \infty$ by
	Lemma~\ref{l:initial_dist_level0}
	and Lemma~\ref{l:essentially_finitely_many_generations}. The analogous
	statement holds when $X_{t_j}^{D,k}(i)$ is replaced by
	$Z_{t_j}^{D,k}(i)$ in~\eqref{eq:ess_fidi_X}.
	To prove~\eqref{eq:replace_fidi} it therefore suffices to show for all
	$K\in\N_0$ that
	\begin{equation}
		\lim_{D\to\infty} \abs*{ \Exp*{\prod_{j=1}^n\psi\left( \sum_{i=1}^D\sum_{k=0}^K
		\phi_j\bigl(
		X_{t_j}^{D,k}(i) \bigr) \right)}
		-
		\Exp*{\prod_{j=1}^n\psi\left( \sum_{i=1}^D\sum_{k=0}^K
		\phi_j\bigl(
		Z_{t_j}^{D,k}(i) \bigr) \right)}} = 0.
		\label{eq:replace_fidi_suffices}
	\end{equation}
	We fix $K\in\N_0$ for the rest of the proof. For every $j\in[n]$ and
	$D\in\N$ we define the function
	$\psi^D_j\colon [0,1]^{[D]\times[K]_0} \to \R$ by
	\begin{equation}
		[0,1]^{[D]\times[K]_0} \ni
		(x_{i,k})_{(i,k)\in[D]\times[K]_0} = x \mapsto \psi^D_j(x) \defeq
		\psi\left( \sum_{i=1}^D\sum_{k=0}^K \phi_j(x_{i,k}) \right).
	\end{equation}
	It follows for all $j\in[n]$ that
	$\sup_{D\in\N}\norm{\psi^D_j}_\infty \leq
	\norm{\psi}_\infty$. Since $\phi_1,\dotsc,\phi_n$, and $\psi$ are
	twice continuously
	differentiable, it holds for all $j\in[n]$ and all $D\in\N$
	that $\psi^D_j\in
	C^2([0,1]^{[D]\times[K]_0},\R)$. Furthermore, the chain rule
	and the product rule imply for all $j\in[n]$, all $D\in\N$, all
	$(\tilde i, \tilde k), (\tilde j,\tilde l)\in
	[D]\times[K]_0$, and all $x\in
	[0,1]^{[D]\times[K]_0}$ that
	\begin{equation}
		\abs*{\frac{\partial \psi^D_j}{\partial x_{\tilde i, \tilde k}}(x)} =
		\abs*{\psi^\prime\left( \sum_{i=1}^D \sum_{k=0}^K
		\phi_j(x_{i,k})\right)\phi_j^\prime(x_{\tilde i, \tilde k})}
		\leq
		\norm{\psi^\prime}_\infty\norm{\phi_j^\prime}_\infty
	\end{equation}
	and
	\begin{equation}
		\begin{split}
			\abs*{\frac{\partial^2 \psi_j^D}{\partial x_{\tilde j, \tilde l} \partial x_{\tilde i,
			\tilde k}}(x)} &= \abs*{\psi^{\prime\prime}\left(
					\smallsum\limits_{i=1}^D \smallsum\limits_{k=0}^K
			\phi_j(x_{i,k})\right)\phi_j^\prime(x_{\tilde j, \tilde l})\phi_j^\prime(x_{\tilde
			i, \tilde k}) + \1_{(\tilde i, \tilde k) = (\tilde j, \tilde l)}
			\psi^\prime\left( \smallsum\limits_{i=1}^D \smallsum\limits_{k=0}^K
			\phi_j(x_{i,k})\right)\phi_j^{\prime\prime}(x_{\tilde i, \tilde k})}\\
			&\leq
			\norm{\psi^{\prime\prime}}_\infty\norm{\phi_j^\prime}_\infty^2+
			\norm{\psi^\prime}_\infty\norm{\phi_j^{\prime\prime}}_\infty.
		\end{split}
	\end{equation}
	It follows for all $j\in[n]$ that
	$\sup_{D\in\N}\norm{\psi_j^D}_{C^2} < \infty$. Then
	Lemma~\ref{l:replace_induction} shows~\eqref{eq:replace_fidi_suffices} which in turn
	finishes the proof of Lemma~\ref{l:replace_fidi}.
\end{proof}

\subsection{Proof of Theorem~\ref{thm:convergence}}
\label{ss:convergence_proof}
\begin{proof}[Proof of \textup{Theorem~\ref{thm:convergence}}]
	In a first step, we prove Theorem~\ref{thm:convergence} under the additional
	assumption that
	\begin{equation}
		\Exp*{\left(\sum_{i=1}^\infty X_0(i)\right)^2}<\infty.
		\label{eq:initial_second_moment_proof}
	\end{equation}
	Analogously to the proofs of
	Lemma~\ref{l:vanishing_immigration_weak_process} and
	Lemma~\ref{l:convergence_of_the_loop_free_process}, one shows that
	\begin{equation}
		\left\{ \left( \sum_{i=1}^D X_{t}^{D}(i)\delta_{X_{t}^{D}(i)} \right)_{t
		\in[0,\infty)} \colon D\in\N \right\}
		\label{eq:convergence_relcomp}
	\end{equation}
	is relatively compact.
	In the following, we identify the limit points
	of~\eqref{eq:convergence_relcomp} by proving convergence of
	finite-dimensional distributions.
	For that, fix $n\in\N$, fix $\varphi_1,\dotsc,\varphi_n\in C^2([0,1],\R)$,
	fix $\psi\in C^2_{b}(\R,\R)$, and fix
	$t_1,\dotsc,t_n\in[0,\infty)$ with $t_1<\dotsb< t_n$.
	For every $j\in[n]$ we define
	the function $\phi_j\colon [0,1]\to \R$ by $[0,1]\ni x \mapsto \phi_j(x) \defeq
	x\varphi_j(x)$.
	For every $D\in\N$ let $\{(X_t^{D,k}(i), W_t^k(i))_{t\in[0,\infty)} \colon
	(i,k)\in[D]\times\N_0\}$ be a weak solution of~\eqref{eq:XD_k}
	with initial distribution satisfying for
	all $i\in[D]$
	that $\MCL(X_0^{D,0}(i)) = \MCL(X_0(i))$ and for all
	$(i,k)\in[D]\times \N$ that $\MCL(X_0^{D,k}(i)) = \delta_0$
	and let $\{(Z_t^{D,k}(i))_{t\in[0,\infty)} \colon
	(i,k)\in[D]\times\N_0\}$ be a solution of~\eqref{eq:ZD_k}
	on the same probability space with Brownian motion given by the Brownian
	motion of the weak solution of~\eqref{eq:XD_k} and started in
	$(X_0^{D,k}(i))_{(i,k)\in[D]\times\N_0}$. Due to this and
	assumption~\eqref{eq:initial_second_moment_proof},
	Setting~\ref{set:migration_levels} and Setting~\ref{set:initial_dist_level0}
	are satisfied.
	Firstly, Lemma~\ref{l:decomposition} shows for all $D\in\N$ that
	\begin{equation}
		\Exp*{\prod_{j=1}^n\psi\left( \sum_{i=1}^D \phi_j\bigl(
		X_{t_j}^{D}(i) \bigr) \right)} =
		\Exp*{\prod_{j=1}^n\psi\left( \sum_{i=1}^D\phi_j\left(
		\sum_{k\in\N_0} X_{t_j}^{D,k}(i) \right) \right)}.
		\label{eq:conv_decomp}
	\end{equation}
	The
	calculation in the following two displays is analogous to that in the proof
	of~(4.111) in~\textcite[p.~34]{Hutzenthaler2012}.
	The assumptions on $\varphi_1,\dotsc,\varphi_n$ imply the existence of a
	constant $L_\phi \in[0,\infty)$ such that it holds for all
	$j\in[n]$ and all $x,y\in[0,1]$ that
	\begin{equation}
		\abs{\phi_j(x) - \phi_j(y)}\leq L_\phi \abs{x-y}.
		\label{eq:conv_Lipschitz_phi}
	\end{equation}
	From this we obtain for all
	$D\in\N$, all $j\in[n]$, all $t\in[0,\infty)$, and all $\delta\in(0,\infty)$ that
	\begin{equation}
		\begin{split}
			\MoveEqLeft\Exp*{\abs*{\sum_{i=1}^D \phi_j\left(
			\sum_{m\in\N_0}X_{t}^{D,m}(i)\right)
			\left(1-\sum_{k\in\N_0}\1_{\{X^{D,k}_t(i)\geq \delta\}}\right)}}\\
			&\leq L_\phi \Exp*{\sum_{i=1}^D \sum_{m\in\N_0}\bigl(X^{D,m}_t(i) \wedge
			\delta\bigr)} + \frac{L_\phi}{\delta^2}
			\Exp*{\sum_{i=1}^D\sum_{k\in\N_0}X^{D,k}_t(i)\sum_{l\in\N_0\setminus\{k\}}X^{D,l}_{t}(i)}.
		\end{split}
		\label{eq:conv_decomp_est}
	\end{equation}
	The fact that there exists a
	constant $L_\psi \in [0,\infty)$ such that it holds for all $x_1,\dotsc,x_n \in \R$
	and all $y_1,\dotsc,y_n \in \R$
	that $\abs{\prod_{j=1}^n \psi(x_j) -
	\prod_{j=1}^n \psi(y_j)} \leq L_\psi \sum_{j=1}^n\abs{x_j-y_j}$ together
	with~\eqref{eq:conv_Lipschitz_phi} and~\eqref{eq:conv_decomp_est} proves for all
	$D\in\N$ and all $\delta \in (0,1)$ that
	\begin{equation}
		\begin{split}
			\MoveEqLeft\abs*{\Exp*{\prod_{j=1}^n\psi\left( \sum_{i=1}^D \phi_j\left(
			\sum_{m\in\N_0}X_{t_j}^{D,m}(i) \right) \right)} -
			\Exp*{\prod_{j=1}^n\psi\left( \sum_{i=1}^D\sum_{k\in\N_0} \phi_j\bigl(
			X_{t_j}^{D,k}(i) \bigr) \right)}}\\
			&\leq L_\psi\sum_{j=1}^n
			\Exp*{\sum_{i=1}^D\sum_{k\in\N_0}\1_{\{X_{t_j}^{D,k}(i)\geq \delta\}}
			\abs*{\phi_j\left(\sum_{m\in\N_0}X_{t_j}^{D,m}(i)\right) -
			\phi_j\bigl(X_{t_j}^{D,k}(i)\bigr)}}\\
			&\quad+ L_\psi\sum_{j=1}^n\Exp*{\abs*{\sum_{i=1}^D \phi_j\left(
			\sum_{m\in\N_0}X_{t_j}^{D,m}(i)\right)
			\left(1-\sum_{k\in\N_0}\1_{\{X^{D,k}_t(i)\geq \delta\}}\right)}}\\
			&\quad+ L_\psi\sum_{j=1}^n \Exp*{\sum_{i=1}^D\sum_{k\in\N_0}
			\1_{\{X^{D,k}_{t_j}(i) < \delta\}}
			\abs[\big]{\phi_j\bigl(X^{D,k}_{t_j}(i)\bigr)}}\\
			&\leq \frac{2 L_\psi L_\phi}{\delta^2} \sum_{j=1}^n
			\Exp*{\sum_{i=1}^D\sum_{k\in\N_0}X^{D,k}_{t_j}(i)\sum_{m\in\N_0\setminus\{k\}}
			X^{D,m}_{t_j}(i)} \\
			&\quad + 2L_\psi L_\phi \sum_{j=1}^n \Exp*{\sum_{i=1}^D
			\sum_{k\in\N_0} \bigl(X^{D,k}_{t_j}(i) \wedge \delta \bigr)}.
		\end{split}
		\label{eq:conv_decomp_est2}
	\end{equation}
	Lemma~\ref{l:initial_dist_level0} and
	Lemma~\ref{l:one_generation_per_island} ensure that the first summand on
	the right-hand side of~\eqref{eq:conv_decomp_est2} converges to zero as
	$D\to\infty$, while
	Lemma~\ref{l:initial_dist_level0} and Lemma~\ref{l:concentration} show that
	the second summand on the right-hand side of~\eqref{eq:conv_decomp_est2}
	converges to zero uniformly in
	$D\in\N$ as $\delta \to 0$.
	By letting first $D\to\infty$ and then $\delta \to
	0$, we therefore obtain
	from~\eqref{eq:conv_decomp_est2} that
	\begin{equation}
		\lim_{D\to\infty}\abs*{\Exp*{\prod_{j=1}^n\psi\left( \sum_{i=1}^D \phi_j\left(
		\sum_{k\in\N_0}X_{t_j}^{D,k}(i) \right) \right)}
		- \Exp*{\prod_{j=1}^n\psi\left(
		\sum_{i=1}^D\sum_{k\in\N_0} \phi_j\bigl(
		X_{t_j}^{D,k}(i) \bigr) \right)}} = 0.
		\label{eq:conv_decomp_pullout}
	\end{equation}
	Lemma~\ref{l:convergence_of_the_loop_free_process} shows that
	\begin{equation}
		\lim_{D\to\infty}\Exp*{\prod_{j=1}^n\psi\left(
		\sum_{i=1}^D\sum_{k\in\N_0} \phi_j\bigl(
		Z_{t_j}^{D,k}(i) \bigr) \right)} = \Exp*{\prod_{j=1}^n\psi\left( \int
		\phi_j(\eta_{t_j-s})
		\di \MCT(ds \otimes d\eta)\right)}.
		\label{eq:conv_loopfree}
	\end{equation}
	Combining~\eqref{eq:conv_decomp},~\eqref{eq:conv_decomp_pullout},
	Lemma~\ref{l:replace_fidi}, and~\eqref{eq:conv_loopfree} shows that
	\begin{equation}
		\lim_{D\to\infty}\Exp*{\prod_{j=1}^n\psi\left( \sum_{i=1}^D \phi_j\bigl(
		X_{t_j}^{D}(i) \bigr) \right)} = \Exp*{\prod_{j=1}^n\psi\left( \int
		\phi_j(\eta_{t_j-s})
		\di \MCT(ds \otimes d\eta)\right)}.
	\end{equation}
	This implies the convergence of finite-dimensional distributions
	of~\eqref{eq:convergence_relcomp} and proves Theorem~\ref{thm:convergence}
	under the additional assumption~\eqref{eq:initial_second_moment_proof}.

	It remains to prove Theorem~\ref{thm:convergence} in the case
	when~\eqref{eq:initial_second_moment_proof} fails to hold. 
	Fix a bounded
	continuous function $F\colon
	D([0,\infty),\MCM_\textup{f}([0,1])) \to \R$ for the rest of the proof.
	Then Setting~\ref{set:initial_moment} and the previous step imply that a.s.
	\begin{equation}
		\begin{split}
			\MoveEqLeft\lim_{D\to\infty}\Exp*{F\left( \left(\sum_{i=1}^D
			X_t^{D}(i)\delta_{X_t^{D}(i)}
			\right)_{t\in[0,\infty)}\right) \given \bigl(X_0^D(i)\bigr)_{i\in\N}}\\
			&=\Exp*{F\left( \left(\int \eta_{t-s}\delta_{\eta_{t-s}} \di\MCT(ds \otimes
			d\eta)\right)_{t\in[0,\infty)} \right) \given (X_0(i))_{i\in\N}}.
		\end{split}
	\end{equation}
	Then it follows from taking expectations and from the dominated convergence
	theorem that
	\begin{equation}
		\lim_{D\to\infty}\Exp*{F\left( \left(\sum_{i=1}^D
		X_t^{D}(i)\delta_{X_t^{D}(i)} \right)_{t\in[0,\infty)}\right)} =
		\Exp*{F\left( \left(\int \eta_{t-s}\delta_{\eta_{t-s}} \di\MCT(ds \otimes
		d\eta)\right)_{t\in[0,\infty)} \right)}.
	\end{equation}
	This finishes the proof of Theorem~\ref{thm:convergence}.
\end{proof}

\subsubsection*{Acknowledgments}
We thank an anonymous referee for many helpful comments.
This paper has been partially supported by the DFG Priority Program
``Probabilistic Structures in Evolution'' (SPP 1590), grant HU 1889/3-2.

\printbibliography
\end{document}